\DeclareSymbolFontAlphabet{\mathbb}{AMSb}
\DeclareSymbolFontAlphabet{\mathbbl}{bbold}
\newsavebox{\pullback}
\sbox\pullback{%
\begin{tikzpicture}%
\draw (0,0) -- (1ex,0ex);%
\draw (1ex,0ex) -- (1ex,1ex);%
\end{tikzpicture}}
\setlist[enumerate]{itemsep=2pt,parsep=2pt,before={\parskip=2pt}}
\setlist[enumerate]{itemsep=2pt,parsep=2pt,before={\parskip=2pt}}
\newcommand{\cosimp}[3]{\xymatrix@1{#1 \ar@<.4ex>[r] \ar@<-.4ex>[r] & {\ }#2 \ar@<0.8ex>[r] \ar[r] \ar@<-.8ex>[r] & {\ } #3 \ar@<1.2ex>[r] \ar@<.4ex>[r] \ar@<-.4ex>[r] \ar@<-1.2ex>[r] & \cdots }}
\newcommand{\colim}{\mathop{\mathrm{colim}}}
\newcommand{\Pic}{\mathop{\mathrm{Pic}}}
\newcommand{\adjunction}[4]{\xymatrix@1{#1{\ } \ar@<0.3ex>[r]^{ {\scriptstyle #2}} & {\ } #3 \ar@<0.3ex>[l]^{ {\scriptstyle #4}}}}
\newcommand{\ul}[1]{\underline{#1}}
\newtheorem{theorem}{Theorem}[section]
\newtheorem{conjecture}[theorem]{Conjecture}
\newtheorem*{theorem*}{Theorem}
\newtheorem*{definition*}{Definition}
\newtheorem{proposition}[theorem]{Proposition}
\newtheorem{lemma}[theorem]{Lemma}
\newtheorem{corollary}[theorem]{Corollary}
\theoremstyle{definition}
\newtheorem{definition}[theorem]{Definition}
\newtheorem{question}[theorem]{Question}
\newtheorem{remark}[theorem]{Remark}
\newtheorem{example}[theorem]{Example}
\newtheorem{notation}[theorem]{Notation}
\newtheorem{convention}[theorem]{Convention}
\newtheorem{construction}[theorem]{Construction}
\crefname{assumption}{assumption}{assumptions}
\crefname{construction}{construction}{constructions}
\newcommand{\Spd}{\mathrm{Spd}}
\newcommand{\et}{\mathrm{\acute{e}t}}
\newcommand{\proet}{\mathrm{pro\acute{e}t}}
\newcommand{\profet}{\mathrm{prof\acute{e}t}}
\newcommand{\perfd}{\mathrm{perfd}}
\newcommand{\Id}{\mathrm{Id}}
\newcommand{\rig}{\mathrm{rig}}
\DeclareMathOperator{\Lie}{Lie}
\DeclareMathOperator{\Hom}{Hom}
\DeclareMathOperator{\Spa}{Spa}
\title{Characterizing perfectoid covers of abelian varieties}
\author{Rebecca Bellovin, Hanlin Cai, and Sean Howe \\with an appendix by Tongmu He}
\begin{document}

\begin{abstract}
We give a simple characterization of all perfectoid profinite \'{e}tale covers of abelian varieties in terms of the Hodge-Tate filtration on the $p$-adic Tate module. We also compute the geometric Sen morphism for all profinite $p$-adic Lie torsors over an abelian variety, and combine this with our characterization to prove a conjecture of Rodr\'{i}guez Camargo on perfectoidness of $p$-adic Lie torsors in this case. We obtain complementary results for covers of semi-abeloid varieties, $p$-divisible rigid analytic groups, and varieties with globally generated 1-forms. Our proof of perfectoidness for covers of abelian varieties is based on results of Scholze on the canonical subgroup and holds for an arbitrary abelian variety over an algebraically closed non-archimedean extension of $\mathbb{Q}_p$. In an appendix authored by Tongmu He, an alternate proof is presented in the case of abelian varieties that can be defined over a discretely valued subfield by combining our computation of the geometric Sen morphism with previous pointwise perfectoidness and purity of perfectoidness results of He. 
\end{abstract}

\maketitle

\tableofcontents

\section{Introduction}

Fix a prime $p$. Perfectoid spaces are a class of spaces in $p$-adic geometry introduced in~\cite{ScholzePerfectoidspaces} to study profinite covers of rigid analytic spaces with a lot of $p$-power ramification.  They generalize the deeply ramified extensions of discretely valued fields studied by Fontaine and Wintenberger, and have come to play a similarly important role in relative $p$-adic Hodge theory, the internal study of $p$-adic geometry, and applications to problems in number theory. 

It is thus natural to ask:

\begin{question}\label{question.which-perfectoid} For $K/\mathbb{Q}_p$ a complete non-archimedean field extension and $X/K$ a connected rigid analytic space, which connected profinite \'{e}tale covers of $X$ are perfectoid?
\end{question}

Historically, the main technique for proving perfectoidness for such a cover has been to construct an integral model for a related tower of finite \'{e}tale covers where the transition morphisms factor through lifts of Frobenius. This technique applies, in particular, to perfectoid tori over perfectoid fields, which provide a source of perfectoid covers of toric varieties as in \cite{ScholzePerfectoidspaces} that can also be used to construct perfectoid covers locally on smooth rigid analytic varieties.  These covers, in turn, provide coordinates suitable for explicit computations in $p$-adic Hodge theory as in, e.g., \cite{scholze2013adic}. For perfectoid tori, the associated tower of finite \'{e}tale covers is given by multiplication by $p$ on a group variety, and the same technique was used also to prove perfectoidness for the analogous covers of $p$-divisible rigid analytic groups in \cite{scholze2012moduli} and abelian varieties in \cite{Blakestad}. 

At a deeper level, this technique was applied to established perfectoidness of infinite level Hodge type Shimura varieties in \cite{ScholzeOntorsioninthecohomologyoflocallysymmetricvarieties}.  Here the Frobenius lifts arise naturally in a part of the tower from a study of overconvergence of the canonical subgroup, and it is necessary to bootstrap the argument to see that the infinite level Shimura varieties with full level structure are perfectoid.

Recently, Rodriguez Camargo has proposed a criterion \cite[Conjecture 3.3.5]{JuangeometricSen} that relates perfectoidness of certain covers to geometric Sen theory, and He \cite{he2024perfd} has proven a pointwise version of this conjecture when the base $X$ is the analytification of an algebraic variety.

{\bf In this paper we restrict to the case when $K=C$ is algebraically closed, and we give a complete answer to \cref{question.which-perfectoid} when $X$ is an abelian variety.} We also obtain complementary results for covers of semi-abeloid varieties and $p$-divisible rigid analytic groups (in the sense of \cite{Fargues-RigAnPDiv}), and relate our results and methods to \cite[Conjecture 3.3.5]{JuangeometricSen} and \cite{he2024perfd}; in particular, we compute the geometric Sen morphism for compact $p$-adic Lie torsors over abelian varieties and prove \cite[Conjecture 3.3.5]{JuangeometricSen} in this case. The proof of our main perfectoidness result for abelian varieties depends on results on canonical subgroups established in the proof of perfectoidness of the infinite level Siegel moduli space in \cite{ScholzeOntorsioninthecohomologyoflocallysymmetricvarieties}; in \cref{appendix.he}, Tongmu He gives an alternate proof of a special case using the purity of perfectoidness established in \cite{he2024purity}. 

\subsection{A precise question.}\label{ss.precise-question}
Before describing our results in detail, we make \cref{question.which-perfectoid} more precise in the case where $K=C$ is algebraically closed. For a general connected rigid analytic space $X/C$ equipped with a point $x \in X(C)$, there is a profinite \'{e}tale fundamental group $\pi_{1}(X,x)$, and an associated diamond $X^\diamond$ over $\Spd C$ in the sense of \cite{ScholzeBerkeleyLectures}, which we interpret as the functor of points of $X$ restricted to perfectoid spaces over $C$. For any open subgroup $H \leq \pi_{1}(X,x)$, there is a finite \'{e}tale cover $X_H/X$ in the category of rigid analytic varieties over $C$. For a closed subgroup $H \leq \pi_{1}(X,x)$, we define $X_H^\diamond:=\lim_{H \leq H' \textrm{open}} X_{H'}^\diamond$. Then any connected profinite \'{e}tale cover of $X^\diamond$ is of the form $X_H^\diamond$ for a unique closed subgroup $H \leq \pi_1(X,x)$.  For $e \in H$ the identity element, $X_{\{e\}}^\diamond$ is the universal profinite \'{e}tale cover cover of $X^\diamond$ and we have $X_H^\diamond=X_{\{e\}}^\diamond/H$ for any closed subgroup $H \leq \pi_1(X,x)$. Note that, when $H$ is not open, the limit cannot be taken in the category of adic spaces, so there is no natural adic space $X_H$. In this case, we may still write $X_H$ for the tower $(X_{H'})_{H \leq H' \textrm{ open}}$ viewed as an object of the pro\'{e}tale site $X_\proet$ defined in \cite{scholze2013adic}. It makes sense to associate a diamond to such an object, and the notation is compatible with what we have called $X_H^\diamond$ above. 

We say that $X_H^\diamond$ is perfectoid if it is representable, i.e. if it is isomorphic to the functor of points of a perfectoid space over $C$. We say that $X_H$ is perfectoid if it is perfectoid as an object of $X_\proet$ in the sense of  \cite[Definition 4.3]{scholze2013adic}. This is a strictly stronger condition than asking that $X_H^\diamond$ be perfectoid (see \cref{remark.distinguishing-perfectoid-conditions} for an example distinguishing the two): essentially, one is asking not only that $X_H^\diamond$ be represented by a perfectoid space, but also that there is a covering of $X_H^\diamond$ by affinoid perfectoids such that functions on those affinoid perfectoids can be approximated arbitrarily well by functions on open affinoids of the finite level spaces $X_{H'}$. In particular, if $X_H$ is perfectoid, then $X_H^\diamond$ is represented by a perfectoid space $P \sim \lim_{H \leq H' \textrm{ open}} X_{H'}$, for the $\sim$-limit of adic spaces as defined in \cite[Definition 2.4.1]{scholze2012moduli}. The precise version of \cref{question.which-perfectoid} we treat is then:
\begin{question}For $C/\mathbb{Q}_p$ an algebraically closed non-archimedean extension, $X/C$ a connected rigid analytic space, and $x \in X(C)$, for which closed $H\leq \pi_1(X,x)$ is $X_H$ and/or $X_H^\diamond$ perfectoid?
\end{question}

\emph{We fix an algebraically closed non-archimedean extension $C/\mathbb{Q}_p$ for the remainder of the paper.} 

\subsection{Perfectoid covers of abelian varieties}
For $A/C$ an abelian variety, the universal profinite \'{e}tale cover is $A_{\{e\}}=\widetilde{A} \coloneqq (A)_{n \in \mathbb{N}}$, where $\mathbb{N}$ is ordered by divisibility and the transition map from $n$ to $m$ with $m|n$ is given by multiplication by $n/m$, and $\pi_1(A,e)=T_{\widehat{\mathbb{Z}}}A\coloneqq\lim_n A[n](C)$ acting on $\widetilde{A}$ by multiplication. By the Chinese remainder theorem, any closed subgroup $H\leq T_{\widehat{\mathbb{Z}}}A$ factors uniquely as a product $\prod_\ell H_\ell$, for closed subgroups $H_\ell \leq T_\ell A = \lim A[\ell^k](C)$, and the key role is played by the factor $H_p$. In particular, the results of \cite{Blakestad} together with almost purity imply that, if $H_p=\{e\}$, then $A_H$ is perfectoid --- for example, $\widetilde{A}$ is perfectoid. 

Our main result, \cref{theorem.cover-classification}, is a simple characterization of all perfectoid profinite \'{e}tale covers of an abelian variety $A/C$. In fact, we will state a more general result allowing semi-abelian varieties (i.e. extensions of abelian varieties by tori) and $p$-divisible rigid analytic groups (in the sense of Fargues \cite{Fargues-RigAnPDiv}). For $E$ a semi-abelian variety or $p$-divisible rigid analytic group we will also write $\widetilde{E} \coloneqq (E)_n$, a profinite \'{e}tale $T_{\widehat{\mathbb{Z}}}E\coloneqq\lim_n E[n](C)$ cover of $E$. We emphasize that when $E$ is not an abelian variety this may not be the universal profinite \'{e}tale cover, and we will only consider covers which are quotients of $\widetilde{E}$ (and thus correspond to closed subgroups of $T_{\widehat{\mathbb{Z}}}E$). 

\begin{remark} For $G$ a $p$-divisible rigid analytic group, multiplication by $n$ on $G$ is invertible for any $n$ coprime to $p$, so $T_{\widehat{\mathbb{Z}}} G = T_p G$ and $\widetilde{G}=\lim_{[p]_G} G^\diamond$ via the natural maps. There is thus no conflict with the notation of \cite{scholze2012moduli}. 
\end{remark}

In order to state the determining criterion, we note that, for $G/C$ a semi-abelian variety or $p$-divisible rigid analytic group, there is a canonical map $f: \Lie G  \rightarrow T_p G \otimes C(-1)$. It is an injection in the semi-abelian case and in the case of $p$-divisible rigid analytic groups with no vector group component. By \cite[Theorem 5.2.1]{scholze2012moduli}, the latter are precisely the $p$-divisible rigid analytic groups arising as the generic fibers of $p$-divisible groups over $\mathcal{O}_C$, and in this case $f$ is the dual Hodge-Tate map of \cite{tate1967p-divisible}. We refer to the map $f$ in general as the dual Hodge-Tate map; by the above discussion, when $G$ is a semi-abelian variety or a $p$-divisible rigid analytic group with no vector component, $f$ is simply the inclusion of the subspace in the Hodge-Tate filtration on $T_p G \otimes C(-1)$. 

\begin{theorem}\label{theorem.cover-classification}Let $C/\mathbb{Q}_p$ be an algebraically closed non-archimedean extension and let $G/C$ be either a semi-abelian variety or a $p$-divisible rigid analytic group. Let $H=\prod H_\ell \leq T_{\widehat{\mathbb{Z}}}G$ be a closed subgroup and let $f: \Lie G \rightarrow T_p G \otimes C(-1)$ be the dual Hodge-Tate map. If  $f^{-1}(H_p \otimes C(-1))=\{0\}$, then $G_H$ (and thus also $G_H^\diamond$) is perfectoid. Conversely, if $f^{-1}(H_p\otimes C(-1))\neq \{0\}$, then no open subdiamond of $G_H^\diamond$ (and thus no open in $G_H$) is represented by a perfectoid space.  
\end{theorem}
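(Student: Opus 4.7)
My plan is to prove the two implications separately, using Scholze's canonical subgroup machinery for perfectoidness and an identification of the geometric Sen morphism for the obstruction.

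\textbf{Perfectoidness direction.} Assume $f^{-1}(H_p \otimes C(-1)) = \{0\}$. I would first make two reductions: the prime-to-$p$ factors of $H$ correspond to tame profinite covers preserving perfectoidness (almost purity), so one may assume $H = H_p \leq T_p G$; and the semi-abelian and $p$-divisible cases reduce to the abelian case via the extension $0 \to T \to G \to A \to 0$ together with perfectoidness of profinite covers of tori, and via Scholze--Weinstein \cite{scholze2012moduli} to pass from a $p$-divisible rigid analytic group without a vector component to the formal completion of an abelian variety. In the core case $G = A$, the hypothesis says that $H_p$ is \emph{anticanonical}: at each level, its image $K_n \leq A[p^n]$ is transverse to the Hodge--Tate subspace, and thus on a sufficiently fine affinoid cover of $A$ on which the canonical subgroup of level $n$ exists with controlled Hodge-theoretic precision, $K_n$ is complementary to the canonical subgroup. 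Following the Siegel argument of \cite{ScholzeOntorsioninthecohomologyoflocallysymmetricvarieties}, I would build such an affinoid cover uniformly in $n$, realize the intermediate tower $(A/K_n)_n$ as the anticanonical tower, and show that each transition $A/K_n \to A/K_{n+1}$ reduces to Frobenius modulo $p^\varepsilon$ for some fixed $\varepsilon > 0$. The standard perfectoidness criterion for $\sim$-limits of finite \'etale covers with Frobenius-like transitions then yields that $A_H$ is perfectoid in $A_\proet$.

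\textbf{Converse direction.} Assume $v \in \Lie G \setminus \{0\}$ with $f(v) \in H_p \otimes C(-1)$; I will show that no open subdiamond $U \subseteq G_H^\diamond$ is representable by a perfectoid space. The geometric Sen morphism of $G_H \to G$ is the composition
\[
\sigma_H: \Lie G \xrightarrow{f} T_p G \otimes C(-1) \twoheadrightarrow (T_p G / H_p) \otimes C(-1),
\]
which I would establish in a preliminary section by a direct computation using the logarithm on $\widetilde{G}$ and translation invariance --- an identification that uses no perfectoidness input. The hypothesis gives $0 \neq v \in \ker \sigma_H$, and translation-equivariance forces $\sigma_H$ to be constant across $G$, so this kernel does not thin out anywhere. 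By the forward direction of \cite[Conjecture 3.3.5]{JuangeometricSen} --- perfectoidness of an open forces the geometric Sen morphism to be injective there --- no $U$ can be representable by a perfectoid space.

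\textbf{Main obstacle.} The technical heart is the uniform construction in the perfectoidness direction. Pointwise, the transversality hypothesis translates cleanly into the anticanonical condition, but upgrading to perfectoidness of the whole tower requires controlling the Hodge height (or degree) of the canonical subgroup \emph{uniformly} on an affinoid cover of $A$. Scholze's Siegel argument exploits the moduli-theoretic interpretation, where Hasse invariants are global functions on the moduli space; for an arbitrary abelian variety one must carry out the canonical subgroup analysis directly, using the Hodge--Tate map on $T_p A$ as the only structural input. Combining this with the appropriate Frobenius-lift criterion for perfectoidness of a tower is where the main work lies.
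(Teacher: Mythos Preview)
Your proposal has the right architecture but contains genuine gaps in each direction.

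\textbf{Perfectoidness direction.} Your reduction of the $p$-divisible case to the abelian case is unjustified: not every $p$-divisible rigid analytic group over $C$ (even with no vector component) arises as the formal completion of an abelian variety, so you cannot simply ``pass to'' one via \cite{scholze2012moduli}. The paper runs the logic in the opposite direction: it handles $p$-divisible rigid analytic groups first and independently, using Fargues' classification to write $G$ as a pullback from a product of copies of $\widehat{\mathbb{G}}_m^{\rig}$ and then pulling back a perfectoid toric cover along an \'etale map (\cref{prop.cover-classification-p-divisible-group}). For the abelian case, your sketch is close in spirit to the paper's, but you omit the reductions that make the canonical-subgroup argument actually go through: Zarhin's trick to obtain a principal polarization, doubling to make $H_p$ isotropic, and enlarging $H_p$ to a maximal isotropic (\cref{lemma.maximal-isotropic}). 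These are what allow the paper to invoke the anticanonical locus of the \emph{Siegel moduli space} from \cite{ScholzeOntorsioninthecohomologyoflocallysymmetricvarieties}, where the Hodge height is controlled globally. Your alternative of carrying out the canonical-subgroup analysis ``directly on $A$'' without the moduli interpretation is precisely what the paper flags as open in \cref{remark.abeloid-canonical-subgroup-expected-statement}; you have not indicated how to do it.

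\textbf{Non-perfectoidness direction.} You invoke ``the forward direction of \cite[Conjecture 3.3.5]{JuangeometricSen}'' --- that perfectoidness of an open forces fibrewise injectivity of the geometric Sen morphism --- as if it were an established theorem. It is not cited or used as such anywhere in the paper; indeed, the paper \emph{proves} this implication in the cases at hand rather than assuming it, and \cref{remark.distinguishing-perfectoid-conditions} shows the subtlety of relating representability of the diamond to the Sen morphism. The paper's argument is instead a direct construction: in the $p$-divisible case one builds an auxiliary $p$-divisible group $G'$ (from the triple $(H, f^{-1}(H\otimes C(-1)), f|_{\ldots})$) that maps to $G_H^\diamond$ lifting the inclusion $G'\hookrightarrow G$, and uses it to produce a nonzero continuous $C$-derivation on functions near the identity, contradicting perfectoidness via the elementary \cref{lem.no-derivation}. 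The semi-abeloid case then reduces to this by translating by a point and intersecting with the orbit of $\widetilde{G}^\diamond_{\widehat{\mathbb{Z}}}/H$. Your computation of the Sen morphism is correct and is carried out in the paper (\cref{prop.geometric-sen-semi-abeloid}), but it is used there to deduce \cref{theorem.geometric-sen} \emph{from} \cref{theorem.cover-classification}, not the other way around.
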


   The case of $p$-divisible rigid analytic groups is \cref{prop.cover-classification-p-divisible-group}. For semi-abelian varieties, the perfectoidness statement is \cref{prop.perfectoid-covers-abelian}, and the non-perfectoidness statements are contained in \cref{prop.cover-classification-semi-abeloid} (which treats, more generally, semi-abeloid varieties; see \cref{remark.semi-abeloid}).

In order to prove \cref{theorem.cover-classification} in full, we first treat the case of $p$-divisible rigid analytic groups.  Their structure theory (due to Fargues \cite{Fargues-RigAnPDiv}) allows us to give a very explicit proof for $p$-divisible rigid analytic groups $G$ by reduction to the explicit case of the $p$-divisible rigid analytic group of a torus. For $A$ a semi-abelian variety and $G \leq A$ its $p$-divisible rigid analytic group (consisting of elements on which multiplication by $p$ is topologically nilpotent), we then obtain the non-perfectoidness result for $A$ from the non-perfectoidness result for $G$ by considering the multiplication action of $\widetilde{G}$ on $\widetilde{A}$. 

However, the perfectoidness result for $G$ is not sufficient to give the perfectoidness result for $A$: indeed, it only implies the (a priori) weaker result that there is an open neighborhood $U$ of the diagonal in $A \times A$ such that projection to the second factor $(A_{H} \times A_H)|_{U} \rightarrow A_H$ is relatively representable in perfectoids (in particular, for any $t: T \rightarrow A_H^\diamond$ for $T$ perfectoid, the restriction of $A_H^\diamond$ to the neighborhood $U_t^\diamond$ of $t$ in $A_T^\diamond$ is perfectoid).

To prove the full perfectoidness statement in the semi-abelian case we need a different strategy. Similarly to \cite{Blakestad}, we use Raynaud uniformization to reduce to the case of an abelian variety of good reduction.  In the good reduction case, we first use some basic facts about abelian varieties and perfectoid spaces to reduce to the case that $A$ admits a principal polarization and $H_p$ is a maximal isotropic subspace. In this case, we appeal to some results of \cite{ScholzeOntorsioninthecohomologyoflocallysymmetricvarieties} relating canonical subgroups and the Hodge-Tate period map to show the injectivity implies that the tower determined by $H_p$ is eventually given by a sequence of Frobenius lifts --- indeed, up to an innocent isogeny, we will see that the associated point $(A, H_p)$ of the Siegel modular variety at level $\Gamma_0(p^\infty)$ lies in the open anti-canonical locus as considered in \cite{ScholzeOntorsioninthecohomologyoflocallysymmetricvarieties}. Such a tower of Frobenius lifts is perfectoid, so we conclude. Note that the case of $H_p=\{e\}$ that is handled by the results of \cite{Blakestad} does not need this finer analysis after reducing to the good reduction case since multiplication by $p$ always factors through the Frobenius.

\begin{remark}For $A/C$ an abelian variety, by dimension considerations the condition for perfectoidness of \cref{theorem.cover-classification} never holds when $\mathrm{rk} H_p > \dim A$, and thus in such a case $A_H^\diamond$ is not perfectoid and thus $A_H$ is also not perfectoid. This can also be seen  by computing the pro-\'etale cohomology of the completed structure sheaf using the natural spectral sequence for the covering $A_{H} \rightarrow A$ --- if $A_{H}$ were perfectoid and $T_p A/ H_p$ had rank less than $\dim A$, then an analysis of the $E_2$ page (using vanishing of the analytic cohomology of $A_H$ in degree above $\dim A$ and vanishing of continuous group cohomology in degree above the rank of $T_p A/H_p$) would imply that $H^{2d}(A_\proet, \widehat{\mathcal{O}}_A)=0$ where $d=\dim A$, when in fact it is equal to $C$ by the primitive comparison theorem.
\end{remark}

\begin{remark}\label{remark.semi-abeloid} We establish the non-perfectoidness part of \cref{theorem.cover-classification} more generally for semi-abeloid varieties in \cref{prop.cover-classification-semi-abeloid}. We also obtain relative perfectoidness of a neighborhood of the diagonal for semi-abeloid varieties, but the proof of perfectoidness for semi-abelian varieties sketched above depends on a relation between the Hodge-Tate filtration and canonical subgroups in the case of principally polarized abelian varieties of good reduction that is established in \cite{ScholzeOntorsioninthecohomologyoflocallysymmetricvarieties}. The proof of this relation uses the existence and nice properties of the moduli space of principally polarized abelian varieties, so does not obviously extend to abeloid varieties. Nonetheless, we expect that this relation should hold for all abeloid varieties of good reduction (see \cref{remark.abeloid-canonical-subgroup-expected-statement}).  
\end{remark}

\subsection{The geometric Sen morphism}

We return for a moment to a general smooth rigid analytic space $X/C$. We have already seen above the special role played by the $p$-adic part of the profinite \'{e}tale fundamental group when $X$ is an abeloid variety. In general, it is natural to single out this $p$-adic part by considering compact $p$-adic Lie groups $K$ and pro-\'{e}tale $K$-torsors $X_\infty / X$, corresponding to continuous homomorphisms $\rho: \pi_1(X,x) \rightarrow K$. The geometric Sen theory of Pan \cite{pan2022locally} and Rodriguez Camargo \cite{JuangeometricSen} produces from $X_\infty$ a canonical Higgs field, which we view as a morphism on $X_\proet$
\[ \kappa_{X_\infty}: T_{X/C} \rightarrow (\Lie K)_\rho \otimes_{\mathbb{Q}_p} {\widehat{\mathcal{O}}_X}(-1), \]
where $T_X$ is the tangent bundle of $X$, viewed as an $\widehat{\mathcal{O}}_X$-module by the fully faithful embedding of \'{e}tale vector bundles into locally free $\widehat{\mathcal{O}}_X$-modules, and $(\Lie K)_\rho$ is appearing as a $\underline{\mathbb{Q}_p}$-local system in its twisted form by the adjoint action of $\pi_1(X,x)$ through $\rho$. If $\rho$ factors through an abelian subgroup $H \leq K$ then $\kappa_{X_\infty}$ factors through $(\Lie H)_\rho= \Lie H$ (i.e. the twist is trivial on $\Lie H$ because the adjoint action is trivial on $\Lie H$); this is the situation in all of our main results appearing below. 

In \cite[Conjecture 3.3.5]{JuangeometricSen}, it is conjectured that $X_\infty$, viewed as an object of $X_\proet$, is perfectoid if and only if $\kappa_{X_\infty}^*$ is surjective, or equivalently if and only if $\kappa_{X_\infty}$ is injective after restriction to the fiber at any geometric point.  For a semi-abelian variety or $p$-divisible rigid analytic group $G/C$, we compute the geometric Sen morphism for all such $\rho$ factoring through the $T_{\widehat{\mathbb{Z}}}G$: it is the constant composition of the dual Hodge-Tate map with $d\rho_p$, where $\rho_p$ is the restriction of $\rho$ to $T_p G$.  Combining this computation with \cref{theorem.cover-classification} proves \cite[Conjecture 3.3.5]{JuangeometricSen} in these cases. In particular, we note that for abelian varieties all $K$-torsors are of the above form, so that this is a complete resolution of \cite[Conjecture 3.3.5]{JuangeometricSen} for abelian varieties. 

\begin{theorem}\label{theorem.geometric-sen}
Let $C/\mathbb{Q}_p$ be an algebraically closed non-archimedean extension and let $G/C$ be a semi-abelian variety or a $p$-divisible rigid analytic group. Let $K$ be a compact $p$-adic lie group. Suppose $\rho: T_{\widehat{\mathbb{Z}}} G \rightarrow K$ is a continuous homomorphism, and let $G_\infty$ be the associated $K$-torsor over $G$. Then, the geometric Sen morphism $\kappa_{G_\infty}$ is the constant composition of $d\rho_p\otimes C$  with 
$f: \Lie G \rightarrow T_p G \otimes C(-1)$, where $f: \Lie G \rightarrow T_p G\otimes C(-1)$ is the dual Hodge-Tate map, i.e.
\[ \kappa_{G_\infty} = \left( (d\rho_p \otimes C) \circ f \right) \otimes_C \widehat{\mathcal{O}}_G.\] 
If $(d\rho_p \otimes C) \circ f$ is injective, then $G_\infty$ (and thus also $G_\infty^\diamond$) is perfectoid. Otherwise, no open subdiamond of $G_\infty^\diamond$ (and thus no also no open in $G_\infty$) is perfectoid. 
\end{theorem}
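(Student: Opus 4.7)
The plan is to split the theorem into (i) the formula for $\kappa_{G_\infty}$ and (ii) the perfectoidness dichotomy, deducing (ii) from (i) and \cref{theorem.cover-classification} by matching the relevant injectivity conditions, and proving (i) by reducing via functoriality to one universal computation that can be identified with Tate's classical Hodge--Tate map. Decompose $T_{\widehat{\mathbb{Z}}}G \cong T_p G \times T^{(p')} G$ into its pro-$p$ and prime-to-$p$ parts. In any compact $p$-adic Lie group $K$, every pro-$\ell$ subgroup for $\ell \neq p$ is finite (its intersection with an open pro-$p$ subgroup of $K$ is trivial), so $\rho(T^{(p')}G)$ is a finite prime-to-$p$ group. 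Hence $G_\infty$ is obtained from the pushout of the universal $T_p G$-torsor $\widetilde{G}_p \coloneqq \lim_{[p]_G} G$ along $\rho_p$ by a further finite prime-to-$p$ \'etale cover, which contributes nothing to Sen theory. By functoriality of the geometric Sen morphism under pushout of torsors along a map of $p$-adic Lie groups, (i) reduces to showing
\[ \kappa_{\widetilde{G}_p} \;=\; f \otimes_C \widehat{\mathcal{O}}_G \colon\; T_{G/C} \cong \Lie G \otimes_C \mathcal{O}_G \;\longrightarrow\; T_p G \otimes_{\mathbb{Z}_p} \widehat{\mathcal{O}}_G(-1). \]

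\textbf{Identifying the universal Sen morphism with $f$.} The tangent bundle of $G$ is canonically trivialized by left translation. I would argue that under this trivialization, $\kappa_{\widetilde{G}_p}$ is itself translation-invariant, hence of the form $\varphi \otimes_C \widehat{\mathcal{O}}_G$ for some $C$-linear $\varphi \colon \Lie G \to T_p G \otimes_{\mathbb{Z}_p} C(-1)$. Indeed, translation by $x \in G(C)$ sends the tower $\widetilde{G}_p$ to an isomorphic $T_p G$-torsor over $G$, via a canonical isomorphism shifting lifts along a chosen preimage of $x$, and the Sen morphism is compatible with these isomorphisms. The identification $\varphi = f$ then reduces to a pointwise computation at the identity $e \in G(C)$: over $\Spa C \to G$ at $e$, the tower $\widetilde{G}_p$ pulls back to the universal cover of the $p$-divisible group $G[p^\infty]$ (or of $G$ itself in the $p$-divisible rigid analytic case), and there the Sen operator is identified with the dual Hodge--Tate map by Tate's foundational theorem in~\cite{tate1967p-divisible}.

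\textbf{Perfectoidness dichotomy.} Setting $H \coloneqq \ker\rho$, the covers $G_\infty$ and $G_H$ agree as pro-\'{e}tale objects (forgetting the $K$-action), and similarly at the level of diamonds. Since $T_p G$ is abelian, $H_p \otimes_{\mathbb{Z}_p} \mathbb{Q}_p = \ker(d\rho_p)$ as subspaces of $T_p G \otimes \mathbb{Q}_p$, so
\[ f^{-1}\bigl(H_p \otimes C(-1)\bigr) \;=\; \ker\bigl((d\rho_p \otimes C) \circ f\bigr). \]
Hence \cref{theorem.cover-classification} applied to $H$ delivers exactly the perfectoidness dichotomy stated in \cref{theorem.geometric-sen}.

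\textbf{Main obstacle.} The core technical hurdle is the globalization step in the second paragraph: the transition maps $[p]_G$ in $\widetilde{G}_p$ are \emph{not} translation-equivariant on the nose, so translation-invariance of $\kappa_{\widetilde{G}_p}$ must be argued by identifying the translate of the tower with the original tower inside the pro-\'etale site, up to a locally constant shift in the $T_p G$-torsor structure, and then invoking naturality of the Rodr\'{i}guez Camargo geometric Sen morphism. A clean way to handle this is to first establish the formula on the $p$-divisible subgroup of $G$, where Fargues' structure theory and the Hodge--Tate period map pin down the Sen morphism globally, and then propagate the result to $G$ via translation.
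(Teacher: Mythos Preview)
Your overall strategy---reduce to the universal $T_pG$-torsor by functoriality under pushout, then identify the Sen morphism with the constant extension of $f$---is exactly the paper's, and your ``Main obstacle'' paragraph converges on the paper's actual argument (use Fargues' structure theory on the $p$-divisible subgroup, then propagate by translation). But two concrete steps in the body of the proposal do not work as written.

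First, in the perfectoidness dichotomy, the claim that $G_\infty$ and $G_H$ agree as pro-\'etale objects is false in general: the $K$-torsor $G_\infty$ is the pushout $\widetilde{G}\wedge^{\rho}K$, and when $\rho$ is not surjective this is strictly larger than $G_H=\widetilde{G}/H$. The paper handles this by writing $G_\infty \cong G_H \times T$ where $T$ is the image of a continuous section of $K\to \mathrm{Im}(\rho)\backslash K$; then $G_\infty$ is perfectoid iff $G_H$ is (take products with, respectively fibers over, the profinite set $T$), and the non-perfectoidness direction similarly passes through by intersecting an open with a fiber. Your equality $\ker(d\rho_p)=H_p\otimes\mathbb{Q}_p$ and the matching of injectivity conditions is fine once this is fixed.

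Second, the ``pointwise computation at $e$ via Tate'' does not make sense as stated: pulling $\widetilde{G}_p$ back along $\Spa C\to G$ at $e$ yields the constant profinite set $\underline{T_pG}$, not a tower with any Sen theory, and Tate's theorem does not compute the Sen operator of a pro-\'etale tower. The paper instead computes $\kappa_{\widetilde{G}}$ for a $p$-divisible rigid analytic group by using Fargues' pullback presentation of $G$ to reduce (via functoriality under pullback and products) to $\widehat{\mathbb{G}}_m^{\rig}$, and then to the standard $\mathbb{Z}_p(1)$-torsor over the torus $\mathbb{T}=\Spa C\langle t^{\pm 1}\rangle$, where it invokes Pan's explicit calculation $\kappa(t\partial_t)=1$. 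For a semi-abeloid $E$, the paper then evaluates at each rank one point $x$ by observing that multiplication by $x$ gives an open immersion $G_{C'}\hookrightarrow E_{C'}$ of the $p$-divisible subgroup identifying the restriction of $\widetilde{E}$ with $\widetilde{G}_{\widehat{\mathbb{Z}}}$; this is the precise form of the ``propagate via translation'' step you gesture at, and it sidesteps the translation-equivariance issue you correctly flag.
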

\begin{proof}
The case of $p$-divisible rigid analytic groups is \cref{prop.geometric-sen-p-divisible}. For semi-abelian varieties, the computation of the geometric Sen morphism and non-perfectoidness statements are contained in \cref{prop.geometric-sen-semi-abeloid} (which treats, more generally, semi-abeloids), and the perfectoidness statement is \cref{cor.geometric-sen-perfectoid-semi-abelian}.
\end{proof}

After the computation of the geometric Sen morphism, the result follows easily from \cref{theorem.cover-classification}. To compute the geometric Sen morphism, we first treat the case of a $p$-divisible rigid analytic group, where the structure theory again reduces us to the simple case of the $p$-divisible rigid analytic group of a torus. For a semi-abelian variety $A$ with $p$-divisible rigid analytic group $G$, we obtain the computation of the geometric Sen morphism by considering the action of $G$ on $A$. 

\begin{remark}In \cite{HoweTangentPAdicMan}, \cite[Conjecture 3.3.5]{JuangeometricSen} is refined by considering natural Banach-Colmez Tangent Bundles attached to a $p$-adic Lie torsor. It is conjectured there that a torsion part of the Banach Colmez Tangent Space at a geometric point of $X_\infty$, which measures the defect from perfectoidness of the Tangent Space, is equal to a module of continuous derivations on functions at that point. Our method of proof also gives a partial result towards this refined conjecture; see also \cref{remark.distinguishing-perfectoid-conditions} for a further illustration of this point.  
\end{remark}

\begin{remark}
Just as in \cref{remark.semi-abeloid}, the computation of the geometric Sen morphism and non-perfectoidness results in \cref{theorem.geometric-sen} hold also for semi-abeloid varieties, but in the general semi-abeloid case under the injectivity assumption we obtain a weaker statement than perfectoidness. These statements are given in \cref{prop.geometric-sen-semi-abeloid}.
\end{remark}

\begin{remark}\label{remark.he-dv}
When $A$ is an abelian variety defined over a discretely valued subfield of $C$, our computation of the geometric Sen morphism combined with the results of \cite[Theorem 1.17]{he2024perfd} gives a weaker pointwise perfectoidness result in \cref{theorem.cover-classification} and \cref{theorem.geometric-sen} under the same injectivity hypothesis. Moreover, after making the same reduction to the good reduction case, in this case one can replace our argument for perfectoidness with an application of the purity of perfectoidness of \cite{he2024purity}. This is explained in detail in \cref{appendix.he}, authored by Tongmu He.  
\end{remark}

Using the Albanese morphism, we also obtain some partial results for some other varieties.

\begin{corollary}[See \cref{corollary.variety with gg diff}]
    Let $C/\mathbb{Q}_p$ be an algebraically closed non-archimedean extension and $X/C$ be a smooth proper connected variety with globally generated 1-forms. Fix a point $x \in X(C)$ and denote the corresponding morphism as $\phi:\pi_1(X,x)^\mathrm{ab}\to T_{\widehat{\mathbb{Z}}}\mathrm{Alb}(X)$. Then, for $H$ a closed subgroup of $\pi_{1}(X,x)^\mathrm{ab}$, $X_H^\diamond$ is perfectoid if $\phi(H)_p \otimes C(-1) \cap \Lie \mathrm{Alb}(X)=0$.
\end{corollary}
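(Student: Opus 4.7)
The plan is to reduce to \cref{theorem.cover-classification} via the Albanese morphism $a\colon X\to A:=\mathrm{Alb}(X)$. I first verify that $a$ is finite. Since $\Omega^1_X$ is globally generated and the pullback $a^*\colon H^0(A,\Omega^1_A)\xrightarrow{\sim} H^0(X,\Omega^1_X)$ is an isomorphism with $\Omega^1_A$ trivial as a bundle, the map $a^*\Omega^1_A\to\Omega^1_X$ is surjective, so $\Omega^1_{X/A}=0$ and $a$ is unramified; being also proper, $a$ is finite. By construction of the Albanese the induced map on abelian fundamental groups is $\phi$. Setting $H':=\phi(H)\leq T_{\widehat{\mathbb{Z}}}A$ (closed, as the continuous image of a compact subgroup), the hypothesis $\phi(H)_p\otimes C(-1)\cap\Lie A = 0$ is exactly the criterion of \cref{theorem.cover-classification}, giving that $A_{H'}$ is a perfectoid space.

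The Albanese lifts to $X_H^\diamond\to A_{H'}^\diamond$, and basic fiber-product considerations give a factorization $X_H^\diamond\to X_{\phi^{-1}(H')}^\diamond\to A_{H'}^\diamond$, where the first arrow is a profinite \'etale cover of diamonds (with Galois group $\phi^{-1}(H')/H$) and $X_{\phi^{-1}(H')}^\diamond$ is the based connected component of the diamond pullback $X^\diamond\times_{A^\diamond} A_{H'}^\diamond$. Since profinite \'etale covers of perfectoid diamonds are perfectoid, it suffices to show that $X_{\phi^{-1}(H')}^\diamond$ is representable by a perfectoid space, or equivalently that the full pullback $X^\diamond\times_{A^\diamond} A_{H'}^\diamond$ is perfectoid (each connected component then being a perfectoid open).

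The main obstacle is this last pullback step: since $a$ is finite but not \'etale, perfectoidness of the pullback does not follow formally from perfectoidness of $A_{H'}$ (closed subdiamonds of perfectoid spaces need not be represented by perfectoid spaces). The strategy is to exploit the explicit tower presentation of $A_{H'}$ coming out of the proof of \cref{theorem.cover-classification}: after the same reductions used there (via Raynaud uniformization and reduction to the principally polarized, maximal isotropic good reduction case), $A_{H'}$ arises as the generic fiber of an inverse limit of finite \'etale covers $\mathfrak{B}_n\to\mathfrak{A}$ whose integral models have Frobenius-lift transition morphisms modulo $p$. Extending $a$ to a morphism of integral models $\mathfrak{X}\to\mathfrak{A}$ using properness, the base-changed tower $\mathfrak{X}\times_\mathfrak{A}\mathfrak{B}_n$ inherits the Frobenius-lift property modulo $p$ by functoriality of the Frobenius, and its generic fiber in the limit is the perfectoid space representing $X^\diamond\times_{A^\diamond} A_{H'}^\diamond$.
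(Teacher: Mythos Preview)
Your setup through the reduction to showing that $X^\diamond\times_{A^\diamond}A_{H'}^\diamond$ is perfectoid is correct and matches the paper exactly: the Albanese is unramified because $\Omega^1_X$ is globally generated, hence finite by properness and Zariski's main theorem; and $X_H^\diamond\to X_{\phi^{-1}(H')}^\diamond$ is finite \'etale (the deck group is a quotient of the finite group $\ker\phi$), so it suffices to handle the pullback along the finite map $a$.

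Where you diverge from the paper is in the last paragraph, and there the argument has a real gap. The reductions in the proof of \cref{theorem.cover-classification} do not produce a Frobenius-lift tower over the original $A=\mathrm{Alb}(X)$: they \emph{replace} $A$ by a different abelian variety (Zarhin's trick passes to $(A\times A^\vee)^4$, then one doubles again, enlarges $H$, etc.), and recover $A_{H'}$ only indirectly as a fiber or pro-\'etale cover of the new tower. There is no map from $X$ to this auxiliary abelian variety, so you cannot base-change along $\mathfrak X\to\mathfrak A$ as you propose. Moreover, even granting a Frobenius-lift tower $\mathfrak B_{n+1}\to\mathfrak B_n$ over $\mathfrak A$, the base-changed transition map $\mathrm{id}_{\mathfrak X}\times_{\mathfrak A}f_n$ is \emph{not} a lift of Frobenius on $\mathfrak X\times_{\mathfrak A}\mathfrak B_n$: modulo $p$ the $\mathfrak X$-factor is fixed by $\mathrm{id}_{\mathfrak X}$, whereas absolute Frobenius on the product acts nontrivially on that factor. ``Functoriality of Frobenius'' does not give what you want here.

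The paper bypasses this entirely. Once $a$ is known to be finite, one works locally: for a finite morphism of analytic Huber pairs $(R,R^+)\to(S,S^+)$ with $(R,R^+)$ perfectoid, the map $R^+\to S^+$ is integral, so by \cite[Theorem~10.11]{BhattScholzepPrismaticCohomology} the perfectoidization $(S^+)_\perfd$ is static, and then $\Spd(S,S^+)$ is represented by the perfectoid space $\Spa((S^+)_\perfd[1/p],(S^+)_\perfd)$. This single appeal to Bhatt--Scholze replaces your entire integral-model argument. Note also that this is precisely why the paper only concludes that $X_H^\diamond$ is perfectoid rather than $X_H$ (cf.\ \cref{remark.unramified-alb-better-version}): the perfectoidization does not obviously arise as a $\sim$-limit of the finite-level rings.
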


In some cases, such as curves of positive genus, one can obtain a stronger result --- see \cref{remark.unramified-alb-better-version} for more details.

\subsection{Outline}
In \cref{s.preliminaries} we recall some definitions and constructions related to the pro-\'{e}tale site, diamonds, perfectoid objects, and geometric Sen theory. In \cref{s.p-divisible-rigid-groups} we recall the theory of $p$-divisible rigid analytic groups and prove our main results in that case. In \cref{s.semi-abeloid-varieties} we treat the semi-abeloid and semi-abelian cases of our results --- all of our results in the general semi-abeloid case are deduced from the corresponding results for $p$-divisible rigid analytic groups, but to obtain the global perfectoidness results in the semi-abelian case requires an additional argument given in \cref{ss.perfectoid-covers}. In \cref{s.varieties-with-globally-generated-1-forms} we discuss consequences for varieties with globally generated 1-forms. In \cref{appedix.abeloid-fundamental-group} we show the profinite \'{e}tale fundamental group $\pi_1(A,e)$ of an abeloid variety $A/C$ is a $T_{\widehat{\mathbb{Z}}}A$; this is known to experts, but we include a proof for completeness. Finally, in \cref{appendix.he}, Tongmu He gives an alternate proof of perfectoidness for covers of abelian varieties as described in \cref{remark.he-dv}. 

\subsection{Acknowledgements}
During part of the preparation of this work, Rebecca Bellovin and Sean Howe were members of the Institute for Advanced Study; Rebecca Bellovin was supported by the Minerva Research Foundation, and Sean Howe was supported by the NSF through grant DMS-2201112, and by the Friends of the Institute for Advanced Study Membership. They were additionally supported as visitors at the 2023 Hausdorff Trimester on The Arithmetic of the Langlands Program by the Deutsche Forschungsgemeinschaft (DFG, German Research Foundation) under Germany’s Excellence Strategy– EXC-2047/1– 390685813.

We thank Tongmu He, Ben Heuer, Juan Esteban Rodr\'{i}guez Camargo, and Peter Wear for helpful conversations and correspondence related to this work. 

\section{Preliminaries}\label{s.preliminaries}
\begin{convention}
     Recall from the end of \cref{ss.precise-question} that we have fixed, for the entire article, a complete algebraically closed extension $C/\mathbb{Q}_p$. As in \cite{scholze2013adic}, by a rigid analytic space over $C$ we mean a locally topologically of finite type adic space over ${\Spa C}\coloneqq\Spa(C,\mathcal{O}_C)$. We will often slightly abuse notation by identifying an algebraic variety with its analytification.

     An abeloid variety over $C$ is a smooth proper connected rigid analytic group over $C$. An abeloid variety is the analytification of an abelian variety over $C$ if and only if it admits a polarization; in this case, we also refer to the abeloid variety as an abelian variety. A semi-abeloid variety (resp. semi-abelian variety) is an extension of an abeloid variety (resp. abelian variety) by a torus $T \cong \mathbb{G}_m^{\mathrm{ad},k}$. 
\end{convention}

\subsection{The pro-\'{e}tale and profinite \'etale site}\label{ss.profet-covers}

For $X/C$ a rigid analytic space, we write $X_\proet$ for the pro-\'{e}tale site of $X$ as defined in \cite[Definition 3.9]{scholze2013adic}. Recall that it is the full subcategory of the pro-objects $\mathrm{Pro}(X_\et)$ whose objects are those $U=(U_i)_{i\in I}$ such that $U_i\to U_j$ is finite \'etale and surjective for sufficiently large $i> j$. Moreover, we write $X_\profet$ for the pro-finite \'{e}tale site of $X$ as defined in \cite[Definition 3.3]{scholze2013adic}. It is the full subcategory of $X_\proet$ whose objects are pro-systems consisting of objects finite \'{e}tale over $X$. For $x \in X(C)$, we write $\pi_1(X,x)$ for the profinite \'{e}tale fundamental group of $X$. By \cite[Proposition 3.5]{scholze2013adic}, there is a canonical equivalence between the category of profinite sets with continuous $\pi_1(X,x)$-action and $X_\profet$.

Recall from \cite[Definition 4.3]{scholze2013adic} that an object $U$ of $X_\proet$ is perfectoid if it admits an open cover by objects $V$ such that each $V$ is affinoid perfectoid in the sense that it admits a presentation $V=(V_i)_{i \in I}$ for where $V_i=\Spa(R_i,R^+_i)$ and, for $R^+:=(\colim R_i^+)^\wedge$ (where $\bullet^\wedge$ is the $p$-adic completion), $(R,R^+)$ is affinoid perfectoid over $(C,\mathcal{O}_C)$. The following lemmas will be used repeatly later in the paper.

\begin{lemma}[{\cite[Lemma 4.6]{scholze2013adic}}]\label{lemma.pro-etale-cover-perf-is-perf}
    Let $X/C$ be a rigid analytic space. If $U\in X_\proet$ is perfectoid and $V\to U$ is pro-\'etale, then $V$ is perfectoid.
\end{lemma}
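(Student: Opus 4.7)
The plan is to show that being perfectoid is local in a sufficiently strong sense and then use the structure of pro-\'{e}tale morphisms together with the tilting correspondence for finite \'{e}tale covers of affinoid perfectoids.

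First, I would reduce to the case where $U$ is itself affinoid perfectoid. By definition $U$ admits an open cover $\{U_\alpha\}$ by affinoid perfectoid objects of $X_\proet$. Pulling back along $V\to U$ yields an open cover $\{V_\alpha = V\times_U U_\alpha\}$ of $V$, and each $V_\alpha\to U_\alpha$ is pro-\'{e}tale. Since the condition of being perfectoid is local on $X_\proet$ (it only requires the existence of some open cover by affinoid perfectoids), it suffices to show each $V_\alpha$ is perfectoid. So I may as well assume $U=\Spa(R,R^+)$ with $(R,R^+)$ affinoid perfectoid over $(C,\mathcal{O}_C)$, presented as $U=(U_i)$ with $U_i=\Spa(R_i,R_i^+)$.

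Next, I would exploit the fact that pro-\'{e}tale over an affinoid perfectoid is, locally, pro-finite-\'{e}tale with an affinoid presentation. Write $V=(V_j)_{j\in J}$ as a cofiltered limit with $V_j\to V_{j'}$ finite \'{e}tale surjective for $j$ sufficiently large; each $V_j$ is \'{e}tale over some $U_i$, and after further localizing on $V$ (which is harmless for the local question of perfectoidness) I can assume that each $V_j$ is affinoid and finite \'{e}tale over an affinoid open of some $U_i$, so in particular finite \'{e}tale over $U$ after passing to suitable affinoid opens. By the perfectoid correspondence for finite \'{e}tale covers of affinoid perfectoids (the almost purity theorem), each such $V_j=\Spa(S_j,S_j^+)$ is again affinoid perfectoid over $(C,\mathcal{O}_C)$.

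Finally, I would verify that the cofiltered limit of these affinoid perfectoids gives an affinoid perfectoid presentation of $V$ in the sense required by the definition. Setting $S^+:=(\colim_j S_j^+)^\wedge$ ($p$-adic completion) and $S:=S^+[1/p]$, the pair $(S,S^+)$ is a filtered colimit, up to completion, of affinoid perfectoid $(C,\mathcal{O}_C)$-algebras, and such a completed colimit is again affinoid perfectoid because the perfectoid property (uniformity, surjectivity of Frobenius on $\mathcal{O}^{\flat}/p$) is preserved under $p$-adic completions of filtered colimits of perfectoid Tate rings. Thus $V$ admits the required affinoid perfectoid presentation and is perfectoid.

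The main obstacle is the second paragraph: massaging the pro-\'{e}tale presentation $V=(V_j)$, which is only pro-\'{e}tale and not a priori pro-finite-\'{e}tale in an affinoid way, into one where each $V_j$ is affinoid and finite \'{e}tale over an affinoid open of $U$, so that almost purity can be invoked. Once this bookkeeping is done, the perfectoid structure propagates through finite \'{e}tale covers and completed colimits without further difficulty.
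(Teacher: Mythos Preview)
The paper does not give its own proof of this lemma; it simply cites \cite[Lemma 4.6]{scholze2013adic} and states the result for later use. Your sketch is essentially the argument given in Scholze's original proof: reduce to affinoid perfectoid $U$, use the local structure of pro-\'{e}tale morphisms to arrange that $V$ is locally a cofiltered tower of affinoid finite \'{e}tale covers over affinoid opens, apply almost purity to see each finite level is affinoid perfectoid, and then pass to the $p$-adically completed colimit. The bookkeeping you flag in your second paragraph is handled in \cite{scholze2013adic} via the standard local factorization of \'{e}tale maps as a composition of rational localizations and finite \'{e}tale morphisms, so your outline is correct and matches the cited source.
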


\begin{lemma}
\label{lem.product of perfedctoids in proet}
    Let $X\to Y \xleftarrow{} Z$ be a diagram of rigid analytic spaces over $C$ and $U=(U_i)_{i\in I}\in X_\proet$, $V=(V_i)_{i\in I}\in Y_\proet$, $W=(W_i)_{i\in I}\in Z_\proet$. Let $U\to V \xleftarrow{} W$ be a diagram of pro-objects of rigid analytic spaces. Then $U\times_V W \in (X\times_Y Z)_\proet$. Moreover, if $U$, $V$ and $W$ are perfectoid, then $U\times_V W$ is perfectoid.
\end{lemma}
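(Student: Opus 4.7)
The plan is to treat the two assertions separately. For the pro-\'{e}tale membership I would form $U \times_V W$ termwise, and for the perfectoid part I would reduce to an affinoid-local statement about completed tensor products.

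For the first assertion, after a standard cofinality reindexing I may assume the three pro-systems share a common cofiltered index $I$ and that the morphisms $U \to V$ and $W \to V$ are given levelwise by $U_i \to V_i$ and $W_i \to V_i$ compatibly with $X \to Y \leftarrow Z$. Setting $P_i := U_i \times_{V_i} W_i$, the \'{e}taleness of $P_i \to X \times_Y Z$ follows from the identification $P_i = (U_i \times_Y W_i) \times_{V_i \times_Y V_i} V_i$ and the observation that the diagonal $V_i \to V_i \times_Y V_i$ is an open immersion (as $V_i \to Y$ is \'{e}tale), which exhibits $P_i$ as an open subspace of the evidently \'{e}tale $X \times_Y Z$-scheme $U_i \times_Y W_i$. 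To analyze the transition maps $P_i \to P_j$, I plan to factor them through $U_i \times_{V_j} W_i$: the second arrow $(U_i \to U_j) \times_{V_j} (W_i \to W_j)$ is a fibered product over $V_j$ of finite \'{e}tale surjective maps and hence finite \'{e}tale surjective, while the first is the clopen immersion obtained by pulling back the diagonal $V_i \hookrightarrow V_i \times_{V_j} V_i$ (finite \'{e}tale on $V_i \to V_j$).

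For the perfectoid step, since perfectoidness is a local condition, it suffices to treat the affinoid case where each $U_i = \Spa(R_{U,i}, R_{U,i}^+)$ (and similarly for $V$, $W$), with $R_U^+ := (\colim_i R_{U,i}^+)^\wedge$ giving $(R_U, R_U^+)$ affinoid perfectoid over $(C, \mathcal{O}_C)$ and analogously for $V$, $W$. Then $P = U \times_V W$ admits a natural affinoid presentation whose integral ring is the $p$-adic completion of $R_U^+ \otimes_{R_V^+} R_W^+$, using that $p$-adic completion commutes with filtered colimits on uniformly bounded systems. Perfectoidness then follows from the standard fact (going back to \cite{ScholzePerfectoidspaces}) that the completed tensor product of perfectoid algebras over a perfectoid base is again perfectoid, combined with \cref{lemma.pro-etale-cover-perf-is-perf} to upgrade to the pro-\'{e}tale sense.

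I expect the main obstacle to be the transition-map analysis in the first step: the factorization above shows $P_i \to P_j$ to be finite \'{e}tale onto a clopen subscheme of $P_j$, but not manifestly surjective onto all of $P_j$ (as one sees already when $U$ and $W$ specify distinct sections of a non-trivial \'{e}tale tower $V$, where the naive termwise product becomes eventually empty). The plan for handling this is to pass to a cofinal sub-pro-system of eventually stable images of the maps $P_i \to P_j$, which is equivalent to $(P_i)$ in the pro-category and enjoys the required finite \'{e}tale surjective transitions; the perfectoid conclusion is insensitive to this reindexing.
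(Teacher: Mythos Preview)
Your proposal is correct and follows essentially the same strategy as the paper, though you are considerably more careful on the first assertion. The paper dispatches the membership $U\times_V W\in (X\times_Y Z)_\proet$ in a single sentence (``follows formally from the fact that pro-\'{e}tale morphisms are stable under base change and composition''), whereas you actually unwind the termwise construction and confront the surjectivity issue for transition maps---a genuine subtlety the paper does not address explicitly, and which your stable-image reindexing handles correctly. For the perfectoid step, both arguments reduce to the affinoid case and invoke the existence of fibre products of perfectoids; the paper phrases this via the $\sim$-limit formalism of \cite[Proposition 2.4.2]{scholze2012moduli}, while you phrase it via completed tensor products---these are the same content. Your closing appeal to \cref{lemma.pro-etale-cover-perf-is-perf} appears unnecessary: once the completed colimit of the $R^+_{P,i}$ is identified with $R_U^+\widehat{\otimes}_{R_V^+}R_W^+$ and shown perfectoid, the definition of ``perfectoid'' in $X_\proet$ is already satisfied, and the cofinal reindexing from the first step does not disturb this since the completed colimit is unchanged.
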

\begin{proof}
    The statement that $U\times_V W \in (X\times_Y Z)_\proet$ follows formally from the fact that pro-\'etale morphisms are stable under base change and composition. For the second statement, one immediately reduces to the case where $U,V,W$ are affinoid perfectoid. Let $\widehat{U}$ be the associated affinoid perfectoid space of $U$, respectively for $\widehat{V} $ and $\widehat{W}$. Since the category of perfectoid spaces admits fibre products, it suffices to show that 
    $$ \widehat{U}\times_{\widehat{V}}\widehat{W} \sim \lim U_i\times_{V_i} W_i$$
    in the sense of \cite[Definition 2.4.1]{scholze2012moduli}. Then it follows from \cite[Proposition 2.4.2]{scholze2012moduli} since it is obvious on the level of complete affinoid $C$-algebras.
    
    \end{proof}

\begin{lemma}
\label{lem.zariski closed of perfectoid in proet}
    Let $i:Z\to X$ be a Zariski-closed immersion of smooth rigid analytic spaces over $C$. If $U=(U_i)_{i\in I}\in X_\proet$ is perfectoid then $U\times_X Z \in Z_\proet$ is perfectoid.
\end{lemma}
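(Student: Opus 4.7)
The first assertion, that $U \times_X Z \in Z_\proet$, is formal: pro-\'etale morphisms are stable under base change, so since $U \to X$ is pro-\'etale, the pullback $U \times_X Z \to Z$ is as well.

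For the perfectoidness assertion, the plan is to mimic the strategy of the proof of \cref{lem.product of perfedctoids in proet}. First I would reduce to the case where $U$ is affinoid perfectoid and, working locally on $X$, assume that $X = \Spa(A, A^+)$ is affinoid with $Z$ cut out by a closed ideal $J \subseteq A$. Writing $U = (U_i)_{i \in I}$ with $U_i = \Spa(R_i, R_i^+)$, let $\widehat{U} = \Spa(R, R^+)$ be the associated affinoid perfectoid space, so that $\widehat{U} \sim \lim U_i$. The natural presentation of $V := U \times_X Z \in Z_\proet$ is $V_i := U_i \times_X Z$, each $V_i$ being the Zariski-closed subspace of $U_i$ cut out by $JR_i$; the natural candidate for a perfectoid space representing $V$ is then $\widehat{U} \times_X Z$, the Zariski-closed subspace of $\widehat{U}$ cut out by $JR$.

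I would then need to verify two things: that $\widehat{U} \times_X Z$ is affinoid perfectoid, and that it satisfies the $\sim$-limit condition of \cite[Definition 2.4.1]{scholze2012moduli} with respect to the presentation $(V_i)$. The first is the substantive step and should follow directly from Scholze's theorem that a Zariski-closed subset of an affinoid perfectoid space is itself affinoid perfectoid (see, e.g., \cite[Lemma II.2.3]{ScholzeOntorsioninthecohomologyoflocallysymmetricvarieties}). The $\sim$-limit verification then reduces to checking on the level of complete topological rings that quotienting by the fixed ideal $J$ commutes with the filtered colimit of the $R_i^+$ and with $p$-adic completion, which should be routine and parallels the final step in the proof of \cref{lem.product of perfedctoids in proet} via \cite[Proposition 2.4.2]{scholze2012moduli}. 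The hard part is the invocation of Scholze's Zariski-closure theorem; the rest is bookkeeping with colimits and completions.
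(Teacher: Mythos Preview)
Your overall strategy is right, but you have misidentified which step is routine and which is substantive. Invoking Scholze's lemma (or Bhatt--Scholze perfectoidization) to see that the Zariski-closed subset of $\widehat{U}$ is affinoid perfectoid is indeed the easy part. The hard part is precisely the ``bookkeeping'' you wave away: showing that the resulting perfectoid ring agrees with $(\colim (R_i/JR_i)^+)^\wedge[1/p]$.

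The point is that Scholze's lemma gives you an affinoid perfectoid $\Spa(S,S^+)$ with a \emph{surjection} $R \twoheadrightarrow S$, but it does \emph{not} say that $S = R/JR$ or even that $S$ is the $p$-adic completion of $R/JR$; in general $R/JR$ need not be uniform, and $S$ is its uniformization/perfectoidization. So your claim that ``quotienting by $J$ commutes with\ldots\ $p$-adic completion'' is exactly what is at stake, not a formality. Concretely, one must show that the natural map $\colim R_i/JR_i \to S$ is injective with dense image and that the subspace topology is the $p$-adic one. Density follows easily from surjectivity of $R \to S$. Injectivity, however, is where the smoothness hypothesis on $Z$ is actually used: since $Z$ is smooth and $U_i \to X$ is \'etale, each $U_i \times_X Z$ is smooth, hence $v$-complete by \cite[Theorem 11.18]{hansen2020sheafiness}, so its functions inject into functions on the $v$-cover $\Spa(S,S^+)$. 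Your proposal never invokes the smoothness of $Z$, which is a tell that this step has been skipped. The paper's proof makes exactly this argument: it identifies $S$ as $(R_\infty^+/IR_\infty \cap R_\infty^+)_{\perfd}[1/p]$ via \cite{BhattScholzepPrismaticCohomology}, and then uses $v$-completeness of the smooth finite levels to get injectivity of $\colim R_i/IR_i \hookrightarrow S$.
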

\begin{proof}
    We can work locally with affinoids. Let $\widehat{U}$ be the associated perfectoid space of $U$. Assume that $X=\Spa(R_0,R_0^+)$, $Z=\Spa(R_0/I,(R_0/I)^+)$, $U_i=\Spa(R_i,R_i^+)$ and $\widehat{U}=\Spa(R_\infty,R_\infty^+)$. We need to show that $(\colim (R_i/IR_i)^+)^\wedge[1/p]$ is perfectoid. First note that $(R_\infty^+/IR_\infty\cap R_\infty^+)_\perfd[1/p] $ is static by \cite[Theorem 10.11]{BhattScholzepPrismaticCohomology}. Moreover, $$R_\infty/IR_\infty \to (R_\infty^+/IR_\infty\cap R_\infty^+)_\perfd[1/p]$$ 
    is surjective by \cite[Theorem 7.4]{BhattScholzepPrismaticCohomology}. Let $S$ be $(R_\infty^+/IR_\infty\cap R_\infty^+)_\perfd[1/p]$ and let $S^+$ be the minimal integrally closed open subring that contains the image of $R_\infty^+$. Since $U_i\times_X Z$ is smooth by assumption, it is $v$-complete by \cite[Theorem 11.18]{hansen2020sheafiness}. Since the perfectoid space $\Spa(S,S^+)$ is a $v$-cover of $U_i\times_X Z$ for each $i$, the canonical map 
    $$\colim R_i/IR_i \to S$$
    is injective and the subspace topology is exactly the $p$-adic topology induced by the lattice $\colim (R_i/IR_i)^+$. However, since the image of $\colim R_i/IR_i$ is dense in $R_\infty/IR_\infty$ and the latter surjects onto $S$, the $p$-adic completion of $\colim R_i/IR_i$ is $S$ as desired.
\end{proof}

\subsection{Diamonds and covers}

We will use the notion of diamonds as defined in \cite{ScholzeBerkeleyLectures, scholze2017etale}. We will work only with perfectoid spaces and diamonds over $\Spd C$, which we view as (a subcategory of) sheaves for the $v$-topology on the category of perfectoid spaces over ${\Spa C}$. In particular, given an adic space $X/{\Spa C}$, we write $X^\diamond$ for its functor of points on perfectoid spaces over $C$, which is a diamond over $\Spd C$ by \cite[Lemma 15.6]{scholze2017etale}. We say that a diamond $D$ over $\Spd C$ is perfectoid if it is representable by a perfectoid space over ${\Spa C}$, i.e. if $D/\Spd C \cong P^\diamond/\Spd C$ for some perfectoid space $P/{\Spa C}$ (which, by Yoneda, is determined up to unique isomorphism). We recall also that any diamond $D$ admits an underlying topological space $|D|$, and that the open subdiamonds of $D$ are in natural bijection with the open subspaces of $|D|$. 

For any topological space $T$, we will write $\underline{T}$ as the associated $v$-sheaf given by $\underline{T}(X)=C^0(|X|,T)$ for any perfectoid space over $C$. By \cite[Lemma 10.13]{scholze2017etale}, for any locally profinite group $G$, the associated $v$-sheaf $\underline{G}$ over $\Spd C$ is represented by a perfectoid space.

For $X/C$ a rigid analytic space, there is a natural functor from the category $X_\proet$ to the category of diamonds over $X^\diamond$ sending $U=(U_i)_{i \in I} \in X_\proet$ to $U^\diamond/X^\diamond:=\varprojlim_{i \in I} U_i^\diamond/X^\diamond$. In particular, it is an easy consequence of the definitions that if $U \in X_\proet$ is perfectoid, then $U^\diamond/\Spd C$ is perfectoid.

\subsection{Geometric Sen theory}
Suppose $X/C$ is a smooth rigid analytic space and $x \in X(C)$. For $K$ a compact $p$-adic Lie group and $\rho: \pi_1(X,x) \rightarrow K$ continuous representation, the associated $K$-torsor is the object of $X_\profet$ associated to the continuous action of $\pi_1(X,x)$ on $K$ by left multiplication through $\rho$. It represents a right torsor for the topological constant sheaf $\ul{K}$ on $X_\proet$. Conversely, every such torsor arises from such a morphism. The results of \cite{JuangeometricSen} associate to any $\ul{K}$-torsor $X_\infty \in X_\proet$ a morphism of sheaves on $X_\proet$
\[ \kappa_{X_\infty}: T_{X/C} \rightarrow (\Lie K)_\rho \otimes_{\ul{\mathbb{Q}}_p} {\widehat{\mathcal{O}}_X}(-1), \]
where here $(\Lie K)_\rho$ denotes the $\ul{\mathbb{Q}_p}$-local system on $X$ associated to the adjoint action of $\rho$. The construction of the geometric Sen morphism is functorial in pullback of torsors along maps between smooth rigid analytic varieties and in push-out along maps between $p$-adic Lie groups. 

\subsection{Raynaud uniformization}

Let $A$ be an abeloid variety over a non-archimedean field $K$ with ring of integers $\mathscr{O}_K$, that is, a rigid analytic group variety which is proper, smooth, and connected.  Let
\begin{equation}\label{eq.raynaud-unif-extension}	0\rightarrow T\rightarrow E\rightarrow B\rightarrow 0	\end{equation}
be an extension of a formal abelian scheme $B$ over $\mathcal{O}_K$ by a split formal torus, and let $T^{\rig}$, $E^{\rig}$, and $B^{\rig}$ denote the rigid analytic generic fibers of $T$, $E$, and $B$, respectively.  If $T^{\rig}$ is isomorphic to a $d$-fold product of copies of the unit circle, there is a natural map $T^{\rig}\hookrightarrow T_K\cong \mathbb{G}_m^{\mathrm{ad},d}$; pushing out by this this map gives us an extension
\[	0\rightarrow T_K\rightarrow E_K\rightarrow B\rightarrow 0	\]

We say that a subgroup $M\subset E_K(K)$ is a \emph{lattice} if it is a discrete subgroup isomorphic to $\mathbb{Z}^{\oplus d}$.  If $M$ is a lattice, the quotient $E_K/M$ exists as a smooth proper rigid analytic space, by \cite[Proposition 6.1.4]{lutkebohmert2016}.  We say that $A$ admits a Raynaud uniformization if there exist an extension $E$ as in \cref{eq.raynaud-unif-extension} and lattice $M\subset E_K$ such that $A\cong E/M$.

\begin{theorem}{\cite[Theorem 7.6.4]{lutkebohmert2016}}
    	After replacing $K$ with a finite extension, $A$ admits a Raynaud uniformization.
\end{theorem}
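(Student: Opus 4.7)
The plan is to follow the classical three-step approach of Raynaud, adapted to abeloid varieties by Bosch and L\"{u}tkebohmert. First, after passing to a suitable finite extension of $K$, I would establish that $A$ has \emph{semi-stable reduction}: a formal model $\mathfrak{A}/\mathscr{O}_K$ whose special fiber has identity component a semi-abelian variety $\overline{E}$, fitting into an extension $0\to\overline{T}\to\overline{E}\to\overline{B}\to 0$ of an abelian variety $\overline{B}$ by a torus $\overline{T}$ of some dimension $d$. Because one lacks access to a N\'{e}ron model in the general abeloid setting, this step is substantially harder than for abelian varieties, and is typically established by analyzing formal Picard functors and the deformation theory of proper rigid analytic groups; this is the deepest ingredient and requires essentially all of the geometric input from the theory.

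Second, I would invoke Grothendieck's lifting theory for extensions to lift $\overline{E}$ uniquely to a formal semi-abelian scheme fitting into an extension $0\to T\to E\to B\to 0$, where $B$ is the formal abelian lift of $\overline{B}$ and $T$ is a formal torus. After a further finite extension of $K$, $T$ becomes split so that $T^{\rig}$ is a $d$-fold product of unit circles embedding naturally in $T_K\cong\mathbb{G}_m^{\mathrm{ad},d}$. Pushing the extension out along this embedding produces $0\to T_K\to E_K\to B^{\rig}\to 0$, which is the extension appearing in the definition of Raynaud uniformization.

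Third, I would construct the uniformization map. The identification of the special fibers of $E$ and $\mathfrak{A}^{\circ}$ lifts, via the formal smoothness of $E$ and the deformation theory of the pair, to a morphism of formal groups $E\to\mathfrak{A}^{\circ}$. Passing to rigid generic fibers and pushing out along $T^{\rig}\hookrightarrow T_K$ yields a rigid analytic group homomorphism $u:E_K\to A$. Its image is open and hence, since $A$ is connected and proper, all of $A$; the kernel $M\coloneqq\ker(u)(K)\subset E_K(K)$ is then shown to be a discrete subgroup, and a comparison of rigid analytic dimensions $\dim E_K=\dim A+d$ together with properness of $E_K/M\cong A$ forces $M$ to be a lattice of rank exactly $d$.

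The main obstacle by far is the first step: obtaining semi-stable reduction for an arbitrary abeloid variety without recourse to N\'{e}ron models. Once that is in hand, the lifting step is formal via Grothendieck's theory, and the identification of the kernel $M$ as a lattice of the correct rank is a dimension-and-properness count. The strategy also clarifies why one must pass to a finite extension twice: once to acquire semi-stable reduction, and once to split the resulting formal torus.
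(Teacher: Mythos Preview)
The paper does not give its own proof of this statement: it is quoted verbatim as \cite[Theorem 7.6.4]{lutkebohmert2016} and accompanied only by the remark that it amounts to a semi-stable reduction theorem for abeloid varieties. So there is nothing in the paper to compare your argument against; your outline is broadly the Bosch--L\"{u}tkebohmert strategy that the cited reference carries out, which is exactly what the paper is deferring to.

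One genuine slip in your sketch: in the third step you assert $\dim E_K=\dim A+d$, but in fact $\dim E_K=\dim A$. The map $u:E_K\to A$ is a surjective group homomorphism with discrete kernel, hence a local isomorphism, so the dimensions must agree. The conclusion that $M$ has rank exactly $d$ does not come from a dimension count of this kind; rather, one uses that $B^{\rig}$ is already proper (good reduction), so properness of $E_K/M$ forces the fibers $T_K/(M\cap T_K)$ over $B^{\rig}$ to be proper, which pins the rank of $M\cap T_K$ to $d$, while the discrete image of $M$ in the proper space $B^{\rig}$ must be finite. This is a local correction and does not affect the overall shape of your plan.
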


\begin{remark}
	Note that this amounts to a semi-stable reduction theorem for abeloid varieties.
\end{remark}

\section{$p$-divisible rigid analytic groups}\label{s.p-divisible-rigid-groups}

In this section we recall the definition and basic structure results on $p$-divisible rigid analytic groups from \cite{Fargues-RigAnPDiv}, then use these to prove the cases of \cref{theorem.cover-classification} and \cref{theorem.geometric-sen} where $G$ is a $p$-divisible rigid analytic group over $C$. 

\subsection{Definition and classification}

The following is \cite[D\'efinition 2]{Fargues-RigAnPDiv} (see also \cite[D\'efinition 1.1]{Fargues-RigAnPDiv2}), except that we view $p$-divisible rigid analytic groups as adic spaces rather than Berkovich spaces. This does not effect the fundamental results or their proofs; see also \cite[Definition 5.2.2]{scholze2012moduli}. 

\begin{definition}[{\cite[D\'efinition 2]{Fargues-RigAnPDiv}}]
A $p$-divisible rigid analytic group over $G$ is an adic space locally topologically of finite type over $\Spa C$ equipped with a commutative group structure such that the multiplication by $p$ homomorphism $[p]_G: G \rightarrow G$ is
\begin{enumerate}
\item topologically nilpotent, in the sense that if $U$ and $V$ are two affinoid neighborhoods of the identity element then, for $n\gg 0$, $[p]_G^n|_U$ factors through $V$, and
\item finite and surjective. 
\end{enumerate}
\end{definition}

The main tool we will use is the classification result \cite[Th\'{e}or\'{e}me 3.3]{Fargues-RigAnPDiv}, which gives a constructive equivalence between $p$-divisible rigid analytic groups over $\Spa C$ and a linear algebra category. In the rest of the subsection we recall from \cite{Fargues-RigAnPDiv} this equivalence. 

If $G/C$ is a $p$-divisible rigid analytic group, we write $\Lie G$ for the tangent space at the identity, viewed as a vector group. There is a surjective \'{e}tale logarithm homomorphism $\log_G: G \rightarrow \Lie G$ fitting in an exact sequence
\[ 0 \rightarrow G[p^\infty] \rightarrow G \xrightarrow{\log_G} \Lie G \rightarrow 0.\]
Note that the vector group $\Lie G$ and the \'{e}tale group $G[p^\infty]$ are both themselves $p$-divisible rigid analytic groups, so that $G$ is an extension of a vector group by an \'{e}tale group in this category.   

\begin{example}
We write $\widehat{\mathbb{G}}^\rig_m$ for the open unit ball around $1$ and $\mathbb{G}_a^{\mathrm{ad}}$ for the rigid analytic affine line treated as a vector group. The group $\widehat{\mathbb{G}}^\rig_m$ is a $p$-divisible rigid analytic group with logarithm sequence
\[ 0 \rightarrow \mu_{p^\infty} \rightarrow \widehat{\mathbb{G}}^\rig_m \xrightarrow{\log} \mathbb{G}_a^{\mathrm{ad}} \rightarrow 0 \]
where here $\mu_{p^\infty}$ denotes the $p$-power roots of unity in $C$ and $\log$ is defined by the usual power series
\[ \log(1+t)=\sum_{n \geq 1} \frac{(-1)^n}{n}t^n.\]
\end{example}

\begin{theorem}[{\cite[Th\'{e}or\`{e}me 3.3]{Fargues-RigAnPDiv}}]
\label{theorem.fargues-rig-an-dual-HT}Let $G/C$ be a $p$-divisible rigid analytic group. Then there exists a canonical $C$-linear morphism, functorial in $G$, the dual Hodge-Tate homomorphism 
\[ {f}:\Lie G \rightarrow \Hom( \mathbb{Q}_p/\mathbb{Z}_p(1), G[p^\infty])\otimes C\simeq  T_p G \otimes C(-1) \]
and a commutative diagram, functorial in $G$,
\begin{equation}
\label{eq.pullback-p-div-group}
\begin{tikzcd}
 0 \ar[r]  & {G[p^\infty]}\ar[r] \ar[d,"\simeq"] & G \arrow[dr, phantom, "\usebox\pullback" , very near start, color=black] \ar[r,"\log_G"] \ar[d] &  {\Lie G}\ar[r] \ar[d,"{f}"] & 0 \\
0\ar[r] & T_p G(-1) \otimes \mu_{p^\infty} \ar[r] & {T_p G(-1)\otimes \widehat{\mathbb{G}}^{\mathrm{rig}}_m} \ar[r,"{\Id\otimes \log}"] & {T_p G(-1)\otimes \mathbb{G}_a^{\mathrm{ad}}} \ar[r] & 0
\end{tikzcd}
\end{equation}
realizing $G$ as a pullback. The functor $G\mapsto (\Lie G,T_pG,{f})$
\[\{ \text{$p$-divisible rigid analytic groups over $C$}\}\rightarrow \substack{\{ (V,\Lambda,f) \ |\ \text{$V$ a finite $C$-vector group, $\Lambda$ a finite free $\mathbb{Z}_p$-module,}\\ \text{ $f:V\to \Lambda \otimes C(-1)$ a $C$-linear morphism}\}}\]
is an equivalence of categories, with inverse the functor sending a triple $ (V,\Lambda,f) $, to the pullback of 
\[\begin{tikzcd}
& V \ar[d,"f"] \\
\Lambda(-1)\otimes \widehat{\mathbb{G}}^{\mathrm{rig}}_m \ar[r,"{\Id\otimes \log}"] & {\Lambda(-1)\otimes \mathbb{G}_a^{\mathrm{ad}}}.
\end{tikzcd}\]
\end{theorem}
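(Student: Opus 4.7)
The plan is to prove the theorem in three stages: (i) construct the logarithm sequence $0\to G[p^\infty]\to G\to\Lie G\to 0$; (ii) construct the dual Hodge-Tate map $f$ and exhibit $G$ as the pullback of the displayed diagram; (iii) verify that the functor is an equivalence by exhibiting its inverse explicitly.

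For (i), I would construct $\log_G:G\to\Lie G$. Topological nilpotence of $[p]_G$ on an affinoid neighborhood of the identity gives the formal completion of $G$ at $e$ the structure of a $p$-divisible commutative formal group over $\mathcal{O}_C$, and the classical formal group logarithm (convergent after inverting $p$) produces a local rigid analytic group homomorphism. Using finite surjectivity of $[p]_G$ to write any $g\in G$ as $p^{-n}\cdot p^n g$ with $p^n g$ near the identity, this extends uniquely to a global surjective homomorphism $\log_G$ with étale kernel $G[p^\infty]$.

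For (ii), since $G[p^\infty]$ is étale over the algebraically closed field $C$, there is a canonical isomorphism $G[p^\infty]\simeq T_pG(-1)\otimes\mu_{p^\infty}$. I would construct the dual Hodge-Tate map by identifying $T_pG(-1)$ with $\Hom(\mu_{p^\infty},G[p^\infty])$ and, for each such homomorphism, canonically extending it to a morphism $\widehat{\mathbb{G}}_m^\rig\to G$; assembling over all of $T_pG(-1)$ yields a canonical morphism $G\to T_pG(-1)\otimes\widehat{\mathbb{G}}_m^\rig$ lifting the identification on torsion, and the induced map on $\Lie$ algebras is the desired $f:\Lie G\to T_pG\otimes C(-1)$. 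The pullback property of the middle square then reduces to showing that the natural map from $G$ to this pullback is an isomorphism, which can be verified by direct inspection of the logarithm sequences on both sides.

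For (iii), given $(V,\Lambda,f)$ the fibre product $G\coloneqq V\times_{\Lambda(-1)\otimes\mathbb{G}_a^\mathrm{ad}}\bigl(\Lambda(-1)\otimes\widehat{\mathbb{G}}_m^\rig\bigr)$ inherits a rigid analytic abelian group structure whose multiplication-by-$p$ map is finite surjective and topologically nilpotent near the identity, inherited from the two factors, making $G$ a $p$-divisible rigid analytic group. Direct computation identifies $\Lie G=V$, $T_pG=\Lambda$, and shows that the dual Hodge-Tate map of $G$ recovers $f$. Full faithfulness is immediate from the pullback presentation: a morphism $G_1\to G_2$ corresponds to a pair of compatible morphisms on $\Lie$ and $T_p$ commuting with the $f$-maps. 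The hardest step is the canonical construction of $f$ in (ii): showing that each homomorphism $\mu_{p^\infty}\to G[p^\infty]$ extends canonically to a morphism $\widehat{\mathbb{G}}_m^\rig\to G$, that these extensions are $C$-linear in the Tate module variable, and that the assembly is functorial in $G$, requires careful use of the universal cover of $G$ together with the fact that $\widehat{\mathbb{G}}_m^\rig$ plays the role of the universal $p$-divisible rigid analytic group lifting $\mu_{p^\infty}$.
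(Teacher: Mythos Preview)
The paper does not prove this theorem; it is quoted from \cite[Th\'eor\`eme 3.3]{Fargues-RigAnPDiv} and used as a black box throughout \cref{s.p-divisible-rigid-groups}. So there is no proof in the paper to compare against, and your sketch should be measured against Fargues's original argument instead.

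That said, there is a genuine gap in your step (ii). You propose to build the middle vertical arrow $G\to T_pG(-1)\otimes\widehat{\mathbb{G}}_m^\rig$ by extending each $\phi\in\Hom(\mu_{p^\infty},G[p^\infty])=T_pG(-1)$ to a homomorphism $\widehat{\mathbb{G}}_m^\rig\to G$ and then ``assembling''. But these extensions point \emph{into} $G$: even if they existed canonically and were $\mathbb{Z}_p$-linear in $\phi$, what you would obtain is a map $T_pG(-1)\otimes\widehat{\mathbb{G}}_m^\rig\to G$, the wrong direction for a pullback square. On Lie algebras this would yield a map $T_pG\otimes C(-1)\to\Lie G$, not the dual Hodge--Tate map $f:\Lie G\to T_pG\otimes C(-1)$. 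Moreover, the extensions you invoke are not canonical: the set of lifts of a given $\phi:\mu_{p^\infty}\to G$ to a homomorphism $\widehat{\mathbb{G}}_m^\rig\to G$ is, when nonempty, a torsor under $\Hom(\mathbb{G}_a^{\mathrm{ad}},G)$, and this group is nonzero as soon as $G$ has a vector component. So no functorial choice is available without further input, and the sentence ``assembling over all of $T_pG(-1)$ yields a canonical morphism $G\to T_pG(-1)\otimes\widehat{\mathbb{G}}_m^\rig$'' does not go through.

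The mechanism Fargues actually uses is an Ext computation: one shows that, in the category of $p$-divisible rigid analytic groups over $C$, extensions of a vector group $V$ by an \'etale group $\Lambda\otimes(\mathbb{Q}_p/\mathbb{Z}_p)$ are classified by $C$-linear maps $V\to\Lambda\otimes C(-1)$, the key input being that $\mathrm{Ext}^1(\mathbb{G}_a^{\mathrm{ad}},\mu_{p^\infty})\cong C$ with generator the logarithm sequence of $\widehat{\mathbb{G}}_m^\rig$. The map $f$ is then \emph{defined} as the class of the logarithm sequence $0\to G[p^\infty]\to G\to\Lie G\to 0$ under this identification, and the pullback description of $G$ is exactly the statement that the universal extension with torsion part $T_pG(-1)\otimes\mu_{p^\infty}$ is $T_pG(-1)\otimes\widehat{\mathbb{G}}_m^\rig$. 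Your steps (i) and (iii) are reasonable in outline, but (ii) needs to be rewritten around this Ext calculation rather than around lifting individual homomorphisms.
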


\begin{example}
It follows that an arbitrary $p$-divisible rigid analytic group over $C$ can be decomposed as a product of a vector group and a $p$-divisible rigid analytic group such that $\Lie G \rightarrow T_p G \otimes C(-1)$ is injective. The latter subcategory admits another important description by \cite[Theorem B]{scholze2012moduli}: For $\mathfrak{G}/\mathcal{O}_C$ a $p$-divisible group, the rigid analytic generic fiber $\mathfrak{G} \mapsto \mathfrak{G}^\rig$ is an equivalence between $p$-divisible groups over $\mathcal{O}_C$ and the full subcategory of $p$-divisible rigid analytic groups $G/C$ such that the dual Hodge-Tate map map $\Lie G \rightarrow T_p G \otimes C(-1)$ is injective. The triple corresponding to $\mathfrak{G}^\rig$ is $(T_p \mathfrak{G}(C), \Lie \mathfrak{G}\otimes C, \mathfrak{\alpha}^\vee)$, where $\alpha^\vee$ is the dual Hodge-Tate map of \cite{tate1967p-divisible}. In particular, the dual Hodge-Tate map is the inclusion of the subspace in the Hodge-Tate filtration on $T_p G \otimes C(-1)$. 
\end{example}

\subsection{The universal cover}

\begin{definition}
\label{def.universal cover of p-divisible rigid group}
   For $G/C$ a $p$-divisible rigid analytic group, we consider the tower\footnote{Since $\widetilde{G}$ it is not typically the universal profinite \'{e}tale cover of $G$, we avoid the terminology universal cover for this object suggested by \cite{scholze2012moduli}.};  $\widetilde{G}\coloneqq(G)_{n \in \mathbb{Z}_{\geq 0}} \in G_\proet$ where the transition map between level $n$ and level $m$, $n \geq m$, is $[p^{n-m}]_G.$ It is a profinite \'{e}tale $T_p G$-cover of $G$. The associated diamond is   
\[ \widetilde{G}^\diamond \coloneqq \lim \left( G^\diamond \xleftarrow{[p]_{G^\diamond}} G^\diamond  \xleftarrow{[p]_{G^\diamond}} G^\diamond \xleftarrow{[p]_{G^\diamond}} \ldots \right)\]
and projection to the first coordinate $\widetilde{G}^\diamond \rightarrow G^\diamond$ is a profinite \'{e}tale $T_pG$-cover of $G^\diamond$. 
\end{definition}

\begin{remark}
For any integer $n$ with $(n,p)=1$, multiplication by $n$ is invertible on $G$. Thus the canonical projection map from the limit over all multiplication by $n$ covers to $\widetilde{G}$, the limit over all multiplication by $p^k$ covers, is an isomorphism. Thus this does not conflict with the notation we have used in the introduction. In particular, we have $T_{\widehat{\mathbb{Z}}}G=T_pG$ by the obvious map. 
\end{remark}

\begin{notation}
    For $G/C$ a $p$-divisible rigid analytic group, we write $G_H$ for the object associated to the induced map $\pi_1(G,e) \rightarrow T_p G/H$ for $H$ a closed subgroup of $T_p G$,. Concretely, it is the tower of finite \'{e}tale covers $(G_U)_U$ where $U$ runs over all open subgroups of $T_p G$ containing $H$.
\end{notation}

The assignment $G \mapsto \widetilde{G}^\diamond$ is a functor from $p$-divisible rigid analytic groups over $C$ to $\mathbb{Q}_p$-vector space objects in the category of diamonds. Applying this functor to the presentation of $G$ as a pullback \cref{eq.pullback-p-div-group}, we obtain a presentation of $\widetilde{G}^\diamond$ as a pullback

\[
\begin{tikzcd}
 \Lambda\otimes \ul{\mathbb{Q}_p} \ar[r] \ar[d,"\simeq"] & {\widetilde{G}^\diamond} \arrow[dr, phantom, "\usebox\pullback" , very near start, color=black] \ar[r] \ar[d] &  {\Lie G}^\diamond\ar[d,"f^{\diamond}"] \\
 \Lambda(-1) \otimes \ul{\mathbb{Q}_p(1)} \ar[r] & \Lambda(-1)\otimes \widetilde{\widehat{\mathbb{G}}_m^{\mathrm{rig}}}^\diamond \ar[r] & {\Lambda(-1)\otimes \mathbb{G}_a^{\mathrm{ad},\diamond}}
\end{tikzcd}
\]

\subsection{Proof of \cref{theorem.cover-classification} for $p$-divisible rigid analytic groups.}

The following is the case of \cref{theorem.cover-classification} when $G$ is a $p$-divisible rigid analytic group. 
\begin{proposition}\label{prop.cover-classification-p-divisible-group}Let $G/C$ be a a $p$-divisible rigid analytic group with dual Hodge-Tate map ${f}: \Lie G \rightarrow T_p G \otimes C(-1)$. 
\begin{enumerate}
\item If ${f}^{-1}(H\otimes C(-1))=\{0\}$, then $G_H$ is a a perfectoid object of $X_\proet$.
\item Moreover, if ${f}^{-1}(H \otimes C(-1)) \neq \{0\}$, then no open subdiamond of $G_H^\diamond$ is perfectoid. 
\end{enumerate}
\end{proposition}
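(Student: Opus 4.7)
The plan is to leverage Fargues's classification (\cref{theorem.fargues-rig-an-dual-HT}), which presents $G$ as the pullback of smooth rigid analytic groups $\Lie G \xrightarrow{f} V \xleftarrow{\log} T$ with $V = \Lambda(-1) \otimes \mathbb{G}_a^{\mathrm{ad}}$ and $T = \Lambda(-1) \otimes \widehat{\mathbb{G}}^{\rig}_m$, where $\Lambda := T_p G$. As a preliminary reduction I would replace $H$ by its saturation in $\Lambda$ (which changes $G_H$ only by a finite \'etale cover, preserving perfectoidness by \cref{lemma.pro-etale-cover-perf-is-perf}, and leaves $H \otimes C$ unchanged) and then choose a $\mathbb{Z}_p$-splitting $\Lambda = H \oplus \Lambda_1$. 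This induces decompositions $V = V_1 \times V_2$, $T = T_1 \times T_2$, and $f = (f_1, f_2)$; passing to the universal cover and quotienting only the $T_1$-factor by its full Tate module yields the key pullback identity
\[
G_H^\diamond \;=\; (\Lie G \times_{V_1} T_1)^\diamond \times_{(\Lie G)^\diamond} (\Lie G \times_{V_2} \widetilde{T_2})^\diamond,
\]
under which the hypothesis $f^{-1}(H \otimes C(-1)) = \{0\}$ is equivalent to $f_2 : \Lie G \to V_2$ being injective.

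For part (1), when $f_2$ is injective it realizes $\Lie G$ as a Zariski closed subspace of $V_2$, both smooth rigid analytic vector groups. Since $\widetilde{T_2}$ is a standard perfectoid torus cover in $(V_2)_\proet$, \cref{lem.zariski closed of perfectoid in proet} implies that $\Lie G \times_{V_2} \widetilde{T_2}$ is perfectoid in $(\Lie G)_\proet$. The projection $\Lie G \times_{V_1} T_1 \to \Lie G$ is \'etale (base change of the \'etale logarithm $T_1 \to V_1$), so the displayed identity exhibits $G_H$ as a pro-\'etale base change of a perfectoid object, hence perfectoid by \cref{lemma.pro-etale-cover-perf-is-perf}.

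For part (2), pick a non-zero $v \in f^{-1}(H \otimes C(-1))$ and set $V_0 = Cv \subseteq \Lie G$. Then $G_0 := \log_G^{-1}(V_0) \subseteq G$ is a sub-$p$-divisible rigid analytic group with $\Lie G_0 = V_0$, $T_p G_0 = \Lambda$, and dual Hodge-Tate map $f|_{V_0}$ landing in $H \otimes C(-1) \subseteq V_1$. Functoriality of the $H$-cover construction gives $G_H \times_G G_0 = (G_0)_H$, and applying the pullback identity above to $G_0$ in place of $G$, the vanishing of the $V_2$-component of $f|_{V_0}$ collapses $\widetilde{T_2}$ to its fiber over $0$ (a constant discrete $v$-sheaf); thus $(G_0)_H^\diamond$ is an \'etale cover of $V_0 \cong \mathbb{A}^1$ times a constant discrete sheaf. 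Since $V_0$ is $1$-dimensional smooth rigid analytic, no non-empty open subdiamond of $V_0^\diamond$ (nor of an \'etale cover of it) is representable by a perfectoid space --- by full faithfulness of the diamond functor on nice rigid analytic spaces together with the non-perfectoidness of positive-dimensional smooth rigid analytic varieties --- so no open of $(G_0)_H^\diamond$ is perfectoid. Finally, if $U^\diamond \subseteq G_H^\diamond$ were a non-empty open perfectoid subdiamond, translation allows me to assume $U^\diamond$ contains a preimage of the identity; then $U^\diamond \cap (G_0)_H^\diamond$ is a non-empty open of $(G_0)_H^\diamond$ and, by the perfectoidification argument underlying \cref{lem.zariski closed of perfectoid in proet} applied to the closed immersion $G_0 \cap W \hookrightarrow W$ (for any open $W \subseteq G$ containing the image of $U^\diamond$), also perfectoid --- contradiction.

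The main obstacle I anticipate is the careful verification of the pullback identity for $G_H^\diamond$: this requires showing that taking universal covers and quotienting by $H$ interact correctly with Fargues's pullback description, which should follow from the functoriality in \cref{theorem.fargues-rig-an-dual-HT} but involves keeping track of the pro-system/diamond compatibility. A secondary technical point is transferring perfectoidness-preservation under pullback along closed immersions of smooth rigid analytic spaces from the pro-\'etale site (\cref{lem.zariski closed of perfectoid in proet}) to the diamond-level statement about open perfectoid subdiamonds needed in the final contradiction.
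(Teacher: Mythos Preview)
Your overall strategy is sound and would work, but it differs from the paper's in both parts, and in each case the paper finds a cleaner shortcut.

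For part (1), the key divergence is in how you normalize $H$. You saturate $H$ and split $\Lambda = H \oplus \Lambda_1$, so that $f_2:\Lie G\to V_2$ is merely injective; you then need \cref{lem.zariski closed of perfectoid in proet} to pull the perfectoid torus cover back along a closed immersion. The paper instead \emph{enlarges} $H$ to $H'$ so that the composite $\Lie G \to (\Lambda/H')\otimes C(-1)$ is an \emph{isomorphism}; then the induced map $G \to (\Lambda/H')(-1)\otimes\widehat{\mathbb{G}}_m^{\rig}$ is \'etale, and $G_{H'}$ is identified with the \'etale pullback of the standard perfectoid universal cover. This avoids the Zariski-closed machinery entirely.

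For part (2), your approach is more circuitous. The paper takes the $p$-divisible rigid analytic group $G'$ with triple $(W',H,f|_{W'})$, where $W' = f^{-1}(H\otimes C(-1))$; the crucial point is that $T_pG' = H$, so that $\widetilde{G'}^\diamond/H = G'^{,\diamond}$ is itself the diamond of a smooth rigid analytic space, giving a map $s:G'^{,\diamond}\to G_H^\diamond$. From this the paper directly manufactures a nonzero continuous derivation on any affinoid perfectoid open containing $e_H$, contradicting \cref{lem.no-derivation}. Your route via $G_0 = \log_G^{-1}(V_0)$ (which has $T_pG_0 = \Lambda$, not $H$) forces you to analyze the structure of $(G_0)_H^\diamond$ and then invoke two facts you leave partly unjustified: (i) that no open subdiamond of a positive-dimensional smooth rigid analytic space is perfectoid, and (ii) that a Zariski-closed subdiamond of an affinoid perfectoid is perfectoid. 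Your justification of (i) via ``full faithfulness of the diamond functor'' is not quite right---full faithfulness compares rigid spaces to rigid spaces, not to perfectoids; the actual content here is precisely \cref{lem.no-derivation} plus the function-pullback argument the paper gives. Point (ii) is true via Bhatt--Scholze perfectoidization but, as you correctly flag, is not the statement of \cref{lem.zariski closed of perfectoid in proet}. Both gaps are bridgeable, but the paper's direct derivation argument sidesteps them.
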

\begin{proof}
We write $\Lambda:=T_p G$. 

We first treat claim (1).  Thus, let $H \leq \Lambda$ be a subgroup such that that $({f})^{-1}(H\otimes C(-1))=0$. We want to show $\widetilde{G}/H$ is perfectoid. Since $({f})^{-1}(H\otimes C(-1))=0$, we can find a subgroup $H'$ containing $H$ such that the composition 
\[ \Lie G \xrightarrow{f} \Lambda \otimes C(-1) \to  (\Lambda/H') \otimes C(-1) \]
is an isomorphism. Then, applying almost purity via \cref{lemma.pro-etale-cover-perf-is-perf} to the cover $G_H \rightarrow G_{H'}$, we find it suffices to show that $G_{H'}$ is perfectoid. Replacing $H$ with $H'$, we may assume below that the composition of  $\Lie G \xrightarrow{f} \Lambda \otimes C(-1) \to  \Lambda/H \otimes C(-1)$ is an isomorphism. 

Now let $\overline{\Lambda}=\Lambda/H$ and $q: \Lambda(-1) \rightarrow \overline{\Lambda}(-1)$. We obtain a natural homomorphism $\pi: G \rightarrow \overline{\Lambda}(-1) \otimes_{\mathbb{Z}_p} \widehat{\mathbb{G}}_m^\rig$ by composing the middle vertical arrow of \cref{eq.pullback-p-div-group} with $q \otimes \Id$. The derivative $d\pi$ at identity in $G$ is given by the composition of ${f}$ with the quotient map $q_C: \Lambda(-1) \otimes C \rightarrow \overline{\Lambda}(-1) \otimes C$. By our assumption above, this is an isomorphism, thus $\pi$ is \'{e}tale. 

Now, we claim that 
\begin{equation}\label{eq.pullback-rigan-proof} G_H \cong \pi^* \left( \widetilde{ \overline{\Lambda}(-1) \otimes \widehat{\mathbb{G}}_m^{\mathrm{rig}}} \right), \end{equation}
where the pullback is of objects in the pro-\'{e}tale site. Indeed, if we note that $\pi$ induces the quotient map $q: \Lambda \rightarrow \overline{\Lambda}$ on Tate modules, then we find that both sides are the cover associated to 
\[ \pi_1(G,e) \rightarrow \Lambda \rightarrow \overline{\Lambda}. \]
This implies that $G_H$ is perfectoid: $\widetilde{ \overline{\Lambda}(-1) \otimes \widehat{\mathbb{G}}_m^{\mathrm{rig}}}$ is evidently a perfectoid object of $(\overline{\Lambda}(-1) \otimes \widehat{\mathbb{G}}_m^{\mathrm{rig}})_\proet$, and the pullback of a perfectoid object in the pro-\'{e}tale site along an \'{e}tale map of rigid analytic varieties is again a perfectoid object (as can be seen, e.g., by writing the \'{e}tale map locally as a composition of rational localizations and finite \'{e}tale maps).

We now treat part (2). Thus, suppose that $H \leq \Lambda$ is a closed subgroup such that ${f}^{-1}(H\otimes C(-1)) \neq 0$.  Suppose, by way of contradiction, that there is an open $U \subseteq G_H^\diamond$ that is perfectoid. By translation, we may assume the identity $e_H: \Spd C \rightarrow G_H^\diamond$ factors through $U$, and by shrinking $U$ even further, we may assume it is affinoid perfectoid, $U=\Spa(R,R^+)^\diamond$ for $\Spa(R,R^+)$ affinoid perfectoid over $\Spa(C, \mathcal{O}_C)$. We will obtain a contradiction by exhibiting a non-zero continuous derivation from $R$ to $C$ (where $C$ is viewed as an $R$-module by $e_H^*$). By \cref{lem.no-derivation} below, no such continuous derivation can exist. 

To exhibit a nonzero derivation, we let $W'$ be the $C$-span of $f^{-1}(H\otimes C(-1))$ in $\Lie G$, a non-zero subspace, and consider the triple $(H, W', f|_{W'})$. This triple determines a $p$-divisible rigid analytic group $G'$, and there is a natural map $i: G' \rightarrow G$ determined by the inclusions $H \hookrightarrow \Lambda$ and $W' \hookrightarrow \Lie G$. We have an induced map of diamond universal covers $\widetilde{G'}^\diamond \rightarrow \widetilde{G}^\diamond$ and taking the quotient by $H$ yields
\[ s: G'^{,\diamond} = \widetilde{G'}^\diamond/H \rightarrow \widetilde{G}^\diamond/H=G_H^\diamond \]
fitting into a commutative diagram
\begin{equation}\label{eq.commutative-sections-pdivproof}\begin{tikzcd}
	& {G_H^\diamond} \\
	{(G')^\diamond} & {G^\diamond}
	\arrow["h", from=1-2, to=2-2]
	\arrow["s", from=2-1, to=1-2]
	\arrow["i"', from=2-1, to=2-2]
\end{tikzcd}\end{equation}
Since $\Lie G'=W' \neq 0$, we may take a non-zero $\partial \in \Lie G'$. Let $V=\Spa(A,A^+) \subseteq s^{-1}(U)$ be an affinoid open containing $e_{G'}$ (here we are using that $|G'|=|G'^\diamond|$). The map $V^\diamond \rightarrow U^\diamond$ induces a map on functions $R \rightarrow A$ sending $R^+$ to $A^+$ --- indeed, this follows from 
\[ R= \Hom_{\Spd C}(U, \mathbb{A}^{1,\diamond}), R^+=\Hom_{\Spd C}(U, \mathbb{D}^\diamond), A=\Hom_{\Spd C}(U, \mathbb{A}^{1,\diamond}), A^+=\Hom_{\Spd C}(U, \mathbb{D}^\diamond)\]
where here $\mathbb{D}$ is the disk $|t| \leq 1 \subseteq \mathbb{A}^1$, the first two equalities follow from Yoneda, and the second two equalities follow from the full-faithfulness of $X/C \rightarrow X^\diamond/\Spd C$ on smooth rigid spaces over $C$. 

Then, viewing $\partial$ as a continuous $C$-linear derivation $A \rightarrow C$, we may compose with $R \rightarrow A$ to obtain a continuous $C$-linear derivation $\partial_R: R \rightarrow C$ (the map $R \rightarrow A$ is continuous because it sends $R^+$ to $A^+$; here we use that both are rings of definition since both spaces are uniform). We claim $\partial_R$ is nonzero: indeed, by the commutativity of \cref{eq.commutative-sections-pdivproof}, $\partial_R (h^* b) = \partial (i^* b)$ for any $b \in \mathcal{O}(G^\diamond)=\mathcal{O}(G).$ But there are evidently global functions on $G$ such that $\partial i^*b \neq 0$ (e.g. the composition of $\log_G$  with an element of $W^*=(\Lie G)^*$ that does not vanish on $\partial \in W'\subseteq W$), so we conclude. 
\end{proof}

The following lemma is well known to experts and can be made considerably stronger; we give an elementary statement and argument that handles just the case we need. 
\begin{lemma}\label{lem.no-derivation}
Let $U=\Spa (R,R^+)$ be affinoid perfectoid over $\Spa C$, and let $u: {\Spa C} \rightarrow U$ be a point. Viewing $C$ as an $R$-module via $u^*$, any continuous $C$-linear derivation of $R$-modules $d: R \rightarrow C$ is identically zero.  
\end{lemma}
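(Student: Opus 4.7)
The strategy is to use the defining Frobenius surjectivity of a perfectoid algebra to force $d(R^+)$ to be divisible by arbitrarily high powers of a pseudouniformizer of $C$, and hence zero. Fix $\varpi \in C$ a pseudouniformizer with $\varpi^p \mid p$ in $\mathcal{O}_C$; this exists because $C$ is perfectoid. I note two preliminary facts: $u^*(R^+) \subseteq \mathcal{O}_C$ because $u$ is a morphism of Huber pairs; and $d|_C \equiv 0$, since $C$-linearity and the Leibniz rule force $d(1) = 0$ and then $d(c) = c\,d(1) = 0$ for all $c \in C$.

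First, continuity of $d$ combined with the fact that $R^+$ is a bounded ring of definition produces an integer $N$ with $d(R^+) \subseteq \varpi^{-N}\mathcal{O}_C$: continuity at the neighborhood $\mathcal{O}_C \subseteq C$ of $0$ yields some $m$ with $d(\varpi^m R^+) \subseteq \mathcal{O}_C$, and $C$-linearity then gives $d(R^+) \subseteq \varpi^{-m}\mathcal{O}_C$. Next, for any $r \in R^+$, the perfectoid hypothesis that Frobenius $R^+/\varpi \to R^+/\varpi^p$ is surjective provides $s, t \in R^+$ with $r = s^p + \varpi^p t$; applying Leibniz together with $d|_C = 0$ yields
\[
d(r) \;=\; p \cdot u^*(s)^{p-1}\, d(s) \;+\; \varpi^p\, d(t).
\]
Since $p \in \varpi^p \mathcal{O}_C$ and $u^*(s) \in \mathcal{O}_C$, both terms lie in $\varpi^p \cdot d(R^+) \subseteq \varpi^{p-N}\mathcal{O}_C$, which improves the bound to $d(R^+) \subseteq \varpi^{p-N}\mathcal{O}_C$.

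Iterating the same argument with this new bound in place of the original, after $k$ steps $d(R^+) \subseteq \varpi^{kp - N}\mathcal{O}_C$; letting $k \to \infty$ forces $d(R^+) = 0$. Since $R = R^+[1/\varpi]$ and $d$ is $C$-linear, $d$ vanishes on all of $R$. The only step requiring any care is the initial continuity bound, which amounts to unwinding the Huber-pair topologies on $R$ and $C$; beyond that the argument is purely formal manipulation of the Leibniz rule, $C$-linearity, and perfectoid Frobenius surjectivity, so I do not anticipate a serious obstacle.
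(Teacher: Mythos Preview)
Your proof is correct and follows essentially the same approach as the paper: use Frobenius surjectivity on $R^+$ together with the Leibniz rule to show that $d(R^+)$ is divisible by arbitrarily high powers of a pseudouniformizer, hence zero. The only cosmetic difference is that the paper obtains the initial bound $d(R^+)\subseteq \mathcal{O}_C$ (after rescaling) by invoking the open mapping theorem on a putative nonzero $d$, whereas you extract it directly from continuity and boundedness of $R^+$; your route is slightly more elementary but the substance is identical.
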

\begin{proof}
Suppose $d: R \rightarrow C$ is a non-zero $C$-linear derivation. It is surjective, thus, by the open mapping theorem for $C$-Banach algebras, $d(R^+)$ is open in $C$; multiplying $d$ by a power of $p$, we may thus assume $d(R^+) \subseteq \mathcal{O}_C$.  Now, consider $x \in R^+$. Since $R^+$ is perfectoid, there exists a $y \in R^+$ such that $x=y^p + p g$ for $g \in R^+$. Thus 
\[ d(x)=d(y^p + p g)=p(u^*(y^{p-1}) d(y) + d(g)).\]
Since $u^*(y^{p-1}) \in \mathcal{O}_C$, $d(y) \in \mathcal{O}_C$, and $d(g) \in \mathcal{O}_C$, we find $d(x)\in p \mathcal{O}_C$. Thus $d(R^+) \subseteq p \mathcal{O}_C$. Repeating the argument, we find $d(R^+) \subseteq p^n \mathcal{O}_C$ for all $n \geq 0$, thus $d(R^+) \subseteq \bigcap_n p^n \mathcal{O}_C = \{0\}$. But then we also have $d(R)=\{0\}$, contradicting the assumption that $d$ was a nonzero derivation.  
\end{proof}

\subsection{Computation of the geometric Sen morphism}

\begin{lemma}\label{lemma.geometric-sen-p-divisible-universal-cover} Let $G/C$ be a $p$-divisible rigid analytic group. The geometric Sen morphism for the $T_p G$-torsor $\widetilde{G}$ is the constant extension of the dual Hodge-Tate map 
\begin{equation}
\label{eq.constant-inclusion}
f \otimes_C \Id:  T_{G/C}\simeq \Lie G \otimes_C \widehat{\mathcal{O}}_G  \rightarrow (T_p G \otimes C(-1))  \otimes_C \widehat{\mathcal{O}}_G =T_p G \otimes \widehat{\mathcal{O}}_G(-1). \end{equation}
\end{lemma}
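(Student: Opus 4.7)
The plan is to reduce, via functoriality of the geometric Sen morphism, to the standard computation of the Sen operator for the cyclotomic tower over the open unit disk. Write $\Lambda = T_pG$ and let $\pi\colon G \to \Lambda(-1) \otimes_{\mathbb{Z}_p} \widehat{\mathbb{G}}_m^\rig$ denote the middle vertical homomorphism in diagram~\eqref{eq.pullback-p-div-group}. Using the description of $G$ as a pullback furnished by the right square of that diagram, one checks directly that
\[ \widetilde{G} \;\simeq\; \pi^{\ast}\bigl(\Lambda(-1) \otimes \widetilde{\widehat{\mathbb{G}}_m^\rig}\bigr) \]
as pro-\'{e}tale $\Lambda$-torsors on $G$: the induced map on $T_p$ is the identity on $\Lambda$ under the canonical identification $\Lambda(-1) \otimes \mathbb{Z}_p(1) \cong \Lambda$ coming from the left vertical isomorphism of \eqref{eq.pullback-p-div-group}. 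By functoriality of $\kappa$ in pullbacks along maps of smooth rigid analytic spaces, $\kappa_{\widetilde{G}}$ is thus the composition of $d\pi$ with the pullback along $\pi$ of $\kappa_{\Lambda(-1)\otimes\widetilde{\widehat{\mathbb{G}}_m^\rig}}$.

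Since $\pi$ is a homomorphism of smooth rigid analytic groups, $d\pi$ is invariant under left translation and is determined by its value at the identity, which is the dual Hodge-Tate map $f\colon \Lie G \to \Lambda(-1) \otimes C = \Lie(\Lambda(-1)\otimes\widehat{\mathbb{G}}_m^\rig)$ by construction of the pullback (the derivative of $\log_G$ and of $\log$ at the identity are identities). Consequently, once one knows that $\kappa_{\Lambda(-1)\otimes\widetilde{\widehat{\mathbb{G}}_m^\rig}}$ is the constant extension of the canonical identification $\Lambda(-1)\otimes C \simeq \Lambda \otimes C(-1)$, the claim follows by composition. Functoriality of $\kappa$ under pushout along homomorphisms of $p$-adic Lie groups, applied to a $\mathbb{Z}_p$-basis of $\Lambda(-1)$, further reduces this last statement to the single case of $\widehat{\mathbb{G}}_m^\rig$ equipped with its tautological $\mathbb{Z}_p(1)$-torsor $\widetilde{\widehat{\mathbb{G}}_m^\rig}$.

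The main obstacle is the foundational computation that $\kappa_{\widetilde{\widehat{\mathbb{G}}_m^\rig}}\colon \widehat{\mathcal{O}} \to \mathbb{Q}_p(1)\otimes\widehat{\mathcal{O}}(-1)$ is the canonical identification, where the source is trivialized by the invariant vector field $t\partial_t$ and the target is trivialized by a generator of $\mathbb{Z}_p(1)$. This is the classical Sen-theoretic computation for the cyclotomic extension and is the prototypical example on which the constructions of~\cite{pan2022locally} and~\cite{JuangeometricSen} are modeled, so it should be invoked directly from those references. Given this input, the rest of the proof is formal functoriality together with the identification $d\pi|_e = f$ recorded above.
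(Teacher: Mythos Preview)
Your proposal is correct and follows essentially the same strategy as the paper: reduce via the pullback square \eqref{eq.pullback-p-div-group} and functoriality of $\kappa$ to $\Lambda(-1)\otimes\widehat{\mathbb{G}}_m^\rig$, then to a single copy of $\widehat{\mathbb{G}}_m^\rig$, and finally invoke the explicit cyclotomic computation. Two small points where the paper is slightly more precise: (i) for the reduction from the product to a single factor, the paper phrases this as ``compatibility with products'' together with pullback along the coordinate projections (citing \cite[Theorem 1.0.4]{JuangeometricSen}), whereas your invocation of ``pushout along a basis'' is a bit loose---pushout along the projections $\Lambda\to\mathbb{Z}_p(1)$ gives only one coordinate at a time, and you should say explicitly that these jointly determine the map into $\Lambda$; (ii) the paper inserts one further pullback step, from $\widehat{\mathbb{G}}_m^\rig$ to the affinoid torus $\mathbb{T}=\Spa C\langle t^{\pm1}\rangle$, because the reference for the explicit Sen computation (\cite[Example 3.1.6]{pan2022locally}) is stated for $\mathbb{T}$ rather than for the open unit disk.
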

\begin{proof}
The universal cover of $\widetilde{G}$ is pulled back from the universal cover of $T_p G(-1) \otimes \widehat{\mathbb{G}}_m^\rig$ by the middle vertical arrow of  \cref{eq.pullback-p-div-group}. Thus, applying functoriality of the geometric Sen morphism under pullback (\cite[Theorem 1.0.3-(5)]{JuangeometricSen}) and observing that the derivative of this arrow $G \rightarrow T_p G(-1) \otimes \widehat{\mathbb{G}}_m^\rig$ is the constant extension of $f$, we are reduced to the case of $T_p G(-1) \otimes \widehat{\mathbb{G}}_m^\rig$. 
Then, if we fix a basis of $T_p G(-1)$, since $\widetilde{G}$ is the product of the the pullbacks of the universal cover $\widehat{\mathbb{G}}^\mathrm{rig}_m$ along the corresponding projection maps, we are reduced by functoriality under pullbacks (\cite[Theorem 1.0.3-(5)]{JuangeometricSen}) and compatibility with products (as contained in (\cite[Theorem 1.0.4]{JuangeometricSen}) to the case $G=\widehat{\mathbb{G}}^\mathrm{rig}_m$. 
By functoriality under pullbacks (\cite[Theorem 1.0.3-(5)]{JuangeometricSen}), it suffices to compute the geometric Sen morphism for the standard $\mathbb{Z}_p(1)$-torsor over the torus 
\[ \Spa C\langle t^{\pm 1/p^\infty}\rangle =: \mathbb{T}_\infty \rightarrow  \mathbb{T}:=\Spa C\langle t^{\pm 1} \rangle,\]
since the restriction of $\mathbb{T}_\infty$ to $\widehat{\mathbb{G}}^\mathrm{rig}_m\subseteq \mathbb{T}$ is $\widetilde{\widehat{\mathbb{G}}^{\mathrm{rig}}_m}$. By \cite[Example 3.1.6]{pan2022locally}, the Sen morphism in this case sends $t\partial_t$ to the element $1 \in (\Lie \mathbb{Z}_p(1) \otimes \widehat{\mathcal{O}}(-1))(\mathbb{T})=\widehat{\mathcal{O}}(\mathbb{T}).$ The restriction of this to $\widehat{\mathbb{G}}_m^\rig$ is the constant extension of the dual Hodge-Tate map for $\widehat{\mathbb{G}}_m^\rig$, as desired. 
\end{proof}

\begin{proposition}\label{prop.geometric-sen-p-divisible}
Let $G/C$ be a $p$-divisible rigid analytic group. Let $K$ be a compact $p$-adic lie group. Suppose $\rho: T_{p} G \rightarrow K $ is a continuous homomorphism. Then, for the associated $K$-torsor $G_\infty$, the geometric Sen morphism is the constant composition of $d\rho$ with the dual Hodge-Tate map $f: \Lie G \rightarrow T_p G\otimes C(-1)$, i.e. 
\[ \kappa_{G_\infty} = \left( (d\rho \otimes C) \circ f \right) \otimes_C \widehat{\mathcal{O}}_G: T_{G/C} \rightarrow \Lie (\mathrm{Im}(\rho)) \otimes_{\mathbb{Q}_p} \widehat{\mathcal{O}}(-1) \subseteq (\Lie K)_\rho \otimes_{\ul{\mathbb{Q}_p}} \widehat{\mathcal{O}}(-1). \]  
Moreover, $G_\infty$ is perfectoid if and only if this is injective at each point, i.e. if and only if
\[ (d\rho_p \otimes C) \circ f \textrm { is injective.}\] 
In fact, if this map is not injective, then no open subdiamond of $G_\infty^\diamond$ is perfectoid. 
\end{proposition}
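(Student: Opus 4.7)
The plan is three steps, using the description of $G_\infty$ as the push-out of the $T_pG$-torsor $\widetilde{G}$ along $\rho: T_pG \rightarrow K$. The main obstacle will be the non-perfectoidness direction, for which I adapt the derivation argument of \cref{prop.cover-classification-p-divisible-group} by constructing an auxiliary section of $G_\infty^\diamond$ from a suitable $p$-divisible sub-object, and then arranging by translation that a hypothetical perfectoid open meets it.

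For the formula, the geometric Sen morphism is functorial under push-out along a homomorphism of $p$-adic Lie groups, so combining this with \cref{lemma.geometric-sen-p-divisible-universal-cover} (which gives $\kappa_{\widetilde{G}} = f \otimes \widehat{\mathcal{O}}_G$) immediately yields $\kappa_{G_\infty} = \bigl((d\rho \otimes C) \circ f\bigr) \otimes_C \widehat{\mathcal{O}}_G$.

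For the perfectoidness direction, set $H \coloneqq \ker \rho$. Since $T_pG$ is torsion-free and $H$ is closed, $\ker(d\rho \otimes C) = H \otimes_{\mathbb{Z}_p} C$, so injectivity of $(d\rho \otimes C) \circ f$ is equivalent to $f^{-1}(H \otimes C(-1)) = \{0\}$, and \cref{prop.cover-classification-p-divisible-group}(1) then provides perfectoidness of $G_H$ in $G_\proet$. The image $N \coloneqq \rho(T_pG)$ is compact, hence a closed subgroup of $K$, and $G_\infty$ is obtained from $G_H = G_N$ by push-out along the closed embedding $N \hookrightarrow K$; concretely, for each open $V \leq K$ the finite-level cover $G_V$ is a disjoint union of finitely many copies of $G_{\rho^{-1}(V)}$, so $G_\infty \to G_H$ is pro-\'etale in $G_\proet$, and \cref{lemma.pro-etale-cover-perf-is-perf} transfers perfectoidness from $G_H$ to $G_\infty$.

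For the non-perfectoidness direction, let $W' \coloneqq f^{-1}(H \otimes C(-1)) \neq 0$ and let $G'/C$ be the $p$-divisible rigid analytic group corresponding via \cref{theorem.fargues-rig-an-dual-HT} to $(H, W', f|_{W'})$; the inclusions of data produce a morphism $i: G' \to G$ inducing $i_\ast: \pi_1(G', e) = H \hookrightarrow T_pG = \pi_1(G, e)$. Since $H = \ker \rho$, the composition $\rho \circ i_\ast$ is trivial, so $i^\ast G_\infty$ is the trivial $K$-torsor on $G'$ and admits sections $s_0: (G')^\diamond \to G_\infty^\diamond$ lifting $i^\diamond$, parametrized by the fiber of $G_\infty$ over $e_G$. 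Assume for contradiction that some affinoid perfectoid open $U \subseteq G_\infty^\diamond$ exists. Using any lift to $G_\infty$ of a $G$-translation on itself (available because the $K$-torsor is translation-invariant, as $\pi_1(G)$ is abelian) together with the right $K$-action on the torsor, translate $U$ so that $s_0(e_{G'}) \in U$ for an appropriate choice of $s_0$. Shrinking $V' \subseteq s_0^{-1}(U)$ to an affinoid open containing $e_{G'}$, pulling back through $s_0^\ast: \mathcal{O}(U) \to \mathcal{O}(V')$ and composing with a nonzero $\partial \in \Lie G' = W'$ produces a continuous $C$-linear derivation $\mathcal{O}(U) \to C$, with non-vanishing verified against pullbacks from $G$ of the form $\ell \circ \log_G$ for $\ell \in (\Lie G)^\ast$ nonzero on $i_\ast \partial$. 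This contradicts \cref{lem.no-derivation} and completes the proof.
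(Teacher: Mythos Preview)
Your computation of the Sen morphism and the reduction of the perfectoidness question to $G_H$ (via $H=\ker\rho$) match the paper. However, there are two points where your argument diverges from the paper's, one of which is an actual error.

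\textbf{The perfectoid direction.} Your claim that for each open $V\leq K$ the cover $G_V$ is a disjoint union of copies of $G_{\rho^{-1}(V)}$ is not correct in general. The $N$-orbits on $K/V$ (with $N=\rho(T_pG)$) are of the form $N/(N\cap kVk^{-1})$, and these stabilizers genuinely depend on $k$ unless $N$ is normal in $K$. The paper avoids this by a cleaner structural step: choose a continuous section $T$ of $K\to N\backslash K$ (this exists, e.g.\ because the quotient is a submersion of $p$-adic manifolds). Then as $\pi_1$-sets $K\cong N\times T$ with $\pi_1$ acting only on the first factor, so
\[
G_\infty \;\cong\; G_H \times \underline{T}
\]
in $G_\proet$. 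Perfectoidness of $G_\infty$ then follows directly from perfectoidness of $G_H$.

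\textbf{The non-perfectoid direction.} You rebuild the derivation argument of \cref{prop.cover-classification-p-divisible-group} from scratch, which forces you to justify that translations on $G$ lift to $G_\infty^\diamond$ and that one can arrange $s_0(e_{G'})\in U$. The paper bypasses all of this using the \emph{same} product decomposition: a nonempty perfectoid open $V\subseteq G_\infty^\diamond=G_H^\diamond\times\underline{T}$, intersected with a suitable slice $G_H^\diamond\times\{t\}$, yields a nonempty perfectoid open of $G_H^\diamond$, contradicting \cref{prop.cover-classification-p-divisible-group}(2). So both directions reduce to \cref{prop.cover-classification-p-divisible-group} in one line once the splitting $G_\infty\cong G_H\times\underline{T}$ is in hand.
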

\begin{proof}
The computation of the geometric Sen morphism follows from \cref{lemma.geometric-sen-p-divisible-universal-cover} and functoriality of the geometric Sen morphism under push-out along maps of $p$-adic Lie groups by \cite[Theorem 1.0.4]{JuangeometricSen}. The remaining claims then follow from \cref{prop.cover-classification-p-divisible-group}: Let $H=\ker \rho$ and let $\overline{\rho}: T_p G/H \rightarrow K$ be the induced map. Then  
\[ G_\infty = \widetilde{G}/H \wedge^{\overline{\rho}} K = \widetilde{G}/H \times T\]
where $T \leq K$ is the image of any continuous section of $K \rightarrow \mathrm{Im}(\overline{\rho})\backslash K$ (such a section exists, e.g. because this quotient map is a submersion of $p$-adic manifolds). Thus, if $G_H$ is perfectoid so is $G_\infty$, and conversely if there is a non-empty perfectoid open $V \subseteq G_\infty^\diamond$, then intersecting with a suitable fiber over $T$, we find a non-empty perfectoid open of $\widetilde{G}^\diamond/H$.   

\end{proof}

\section{Semi-abeloid varieties}\label{s.semi-abeloid-varieties}

In this section we establish our results on semi-abeloid varieties, including  the cases of \cref{theorem.cover-classification} and \cref{theorem.geometric-sen} where $G$ is a semi-abelian variety. The non-perfectoidness results and a weak version of the perfectoidness result are established in \cref{ss.semi-abeloid-results-using-p-div-groups} as an application of the results for $p$-divisible rigid analytic groups established in \cref{s.p-divisible-rigid-groups}. The perfectoidness result for semi-abelian varieties, which requires a different argument, is established in \cref{ss.perfectoid-covers}. 

\subsection{Definitions and the associated $p$-divisible rigid analytic group}
Recall that an abeloid variety is a rigid analytic group variety which is proper, smooth, and connected, and a semi-abeloid variety is a rigid analytic group variety that is an extension of an abeloid variety by a torus.
\begin{definition}\label{def.prof-etale-Etilde-cover}
    For $E/C$ a semi-abeloid variety, we consider the tower $\widetilde{E}\coloneqq(E)_n \in E_\proet$ where the transition maps from the $n$th term to the $m$th term, $m|n$ is multiplication $n/m$. It is a profinite \'{e}tale $T_{\widehat{\mathbb{Z}}} E$-cover of $E$. The associated diamond is  $\widetilde{E}^\diamond \coloneqq \lim_{[n]_{E^\diamond}} E^\diamond$
and projection to the first coordinate $\widetilde{E}^\diamond \rightarrow E^\diamond$ is a profinite \'{e}tale $T_{\widehat{\mathbb{Z}}} E$-cover of $E^\diamond$.
\end{definition}

\begin{remark}
    It is well-known that the profinite \'etale fundamental group of an abeloid variety $E$ is $T_{\widehat{\mathbb{Z}}}E$. Due to the lack of reference outside of the abelian case, we give a proof in \cref{appedix.abeloid-fundamental-group}. However, as noted in the introduction, $\widetilde{E}$ is the universal cover in the abeloid case but may not be the universal cover for a more general semi-abeloid. 

\end{remark}

\begin{construction}
\label{cons.p-divisible group attached to abeloid}
For $E/C$ a semi-abeloid variety, we define the $p$-divisible rigid analytic group $G$ of $E$ as 
\[ G\coloneqq\lim_{U \subseteq E \textrm{ an open subgroup}} \bigcup_{n \geq 0} [p]_{E}^{-n}(U). \]
The limit exists: any sufficiently small open subgroup is isomorphic by exponentiation to an open subgroup of the vector group $\Lie E$, and the the projection of the limit to the corresponding term is an isomorphism. In particular, $G$ is an open rigid analytic subgroup of $E$.
\end{construction}

We may view $\widetilde{G}^\diamond$ as a subgroup of $\widetilde{E}^\diamond$. It is not an open subgroup, but we define $\widetilde{G}^\diamond_{\widehat{\mathbb{Z}}}$ to be the open preimage of $G^\diamond$ in $\widetilde{E}^\diamond$.

\begin{lemma}
    Suppose $E/C$ is a semi-abeloid variety and let $G$ be its associated $p$-divisible rigid analytic group. Then there is an natural isomorphism via multiplication 
    $$\widetilde{G}^\diamond \times \ul{T_{\widehat{\mathbb{Z}}^{(p)}}E} \xrightarrow{\simeq}\widetilde{G}^\diamond_{\widehat{\mathbb{Z}}} $$
    where $T_{\widehat{\mathbb{Z}}^{(p)}} E= \varprojlim_{(n,p)=1} E[n](C)$.
\end{lemma}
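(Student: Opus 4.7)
The plan is to use the Chinese Remainder Theorem decomposition $T_{\widehat{\mathbb{Z}}} E \cong T_p E \times T_{\widehat{\mathbb{Z}}^{(p)}} E$ to split $\widetilde{E}^\diamond$ as a fiber product of a $p$-power part and a prime-to-$p$ part over $E^\diamond$, and then analyze the restriction of each factor to the open subdiamond $G^\diamond$. Explicitly, writing $\widetilde{E}_p^\diamond := \lim_{[p]_E} E^\diamond$ and $\widetilde{E}^{(p),\diamond} := \lim_{(m,p)=1,\,[m]_E} E^\diamond$ (the latter ordered by divisibility in $m$), the full tower $\widetilde{E}^\diamond = \lim_{[n]_E} E^\diamond$ decomposes as
\[
\widetilde{E}^\diamond \;\cong\; \widetilde{E}_p^\diamond \times_{E^\diamond} \widetilde{E}^{(p),\diamond}
\]
via the order-isomorphism $n \leftrightarrow (v_p(n),\, n/p^{v_p(n)})$ on the indexing poset. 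Pulling back along $G^\diamond \hookrightarrow E^\diamond$ then gives
\[
\widetilde{G}^\diamond_{\widehat{\mathbb{Z}}} \;=\; \bigl(\widetilde{E}_p^\diamond \times_{E^\diamond} G^\diamond\bigr) \times_{G^\diamond} \bigl(\widetilde{E}^{(p),\diamond} \times_{E^\diamond} G^\diamond\bigr),
\]
so it will suffice to identify these two factors as $\widetilde{G}^\diamond$ and $G^\diamond \times \underline{T_{\widehat{\mathbb{Z}}^{(p)}} E}$ respectively.

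For the $p$-power factor, the first step is to check that $[p^k]_E^{-1}(G) = G$ as open subspaces of $E$: the characterization of $G$ as the topologically $p$-nilpotent locus of $E$ says $x \in G$ iff $p^n x \to 0$, and this condition holds for $x$ iff it holds for $p^k x$. Hence $\widetilde{E}_p^\diamond \times_{E^\diamond} G^\diamond = \lim_{[p]_G} G^\diamond = \widetilde{G}^\diamond$, essentially by definition.

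For the prime-to-$p$ factor, the key point is that $[m]_G : G \to G$ is an isomorphism whenever $(m,p) = 1$. This is immediate from \cref{theorem.fargues-rig-an-dual-HT}, since multiplication by such an $m$ acts as the automorphism $m\cdot$ on both $T_p G$ and $\Lie G$. In particular $G[m] = 0$, so the translates $\{e + G\}_{e \in E[m]}$ are pairwise disjoint and give a decomposition $[m]_E^{-1}(G) = \bigsqcup_{e \in E[m]} (e + G)$ together with a trivialization of diamonds over $G^\diamond$
\[
[m]_E^{-1}(G^\diamond) \xrightarrow{\sim} \underline{E[m]} \times G^\diamond, \qquad e + [m]_G^{-1}(g) \mapsto (e, g).
\]
Under this trivialization, for $m \mid n$ with $(n, p) = 1$ the transition map $[n/m]$ becomes the canonical quotient $\underline{E[n]} \to \underline{E[m]}$ on the first factor and the identity morphism on $G^\diamond$ (using $[n/m]_G \circ [n]_G^{-1} = [m]_G^{-1}$), so taking the limit in $m$ gives $\widetilde{E}^{(p),\diamond} \times_{E^\diamond} G^\diamond \cong G^\diamond \times \underline{T_{\widehat{\mathbb{Z}}^{(p)}} E}$.

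Combining the two identifications yields the desired isomorphism $\widetilde{G}^\diamond_{\widehat{\mathbb{Z}}} \cong \widetilde{G}^\diamond \times \underline{T_{\widehat{\mathbb{Z}}^{(p)}} E}$, and tracing through the construction shows it is induced by the group law in $\widetilde{E}^\diamond$. The main technical step will be verifying that $[m]_G$ is an isomorphism for $(m,p)=1$ and that the chosen trivializations of $[m]_E^{-1}(G)$ are compatible with the transition maps in $m$; the remainder is routine manipulation of inverse limits.
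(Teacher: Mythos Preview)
Your proposal is correct and takes essentially the same approach as the paper: both arguments hinge on the fact that $[m]_G$ is an isomorphism for $(m,p)=1$, which trivializes the prime-to-$p$ torsors over $G^\diamond$, and then pass to the inverse limit. The paper packages this as a split short exact sequence $0\to \underline{T_{\widehat{\mathbb{Z}}^{(p)}}E} \to \widetilde{G}^\diamond_{\widehat{\mathbb{Z}}} \to \widetilde{G}^\diamond \to 0$ whereas you write it as a fiber-product decomposition via the Chinese remainder splitting of the indexing poset, but the content is the same.
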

\begin{proof}
    Note that there is an natural surjection $\widetilde{G}^\diamond_{\widehat{\mathbb{Z}}} \to \widetilde{G}^\diamond $ by projecting onto the pro-$p$ part. We claim that the kernel is given by $\ul{T_{\widehat{\mathbb{Z}}^{(p)}}E}$, i.e. there is an split short exact sequence
    $$0\to \ul{T_{\widehat{\mathbb{Z}}^{(p)}}E} \to \widetilde{G}^\diamond_{\widehat{\mathbb{Z}}} \to \widetilde{G}^\diamond \to 0 .$$
    Indeed, when $(n,p)=1$, since multiplication by $n$ on $G$ is an isomorphism the $E[n]^\diamond$-torsor $[n]_{E^\diamond}^*G^\diamond \to G^\diamond$ splits. Hence $[n]_{E^\diamond}^*G^\diamond \simeq G^\diamond \times E[n]^\diamond$. Identifying $E[n]^\diamond$ with $\ul{E[n](C)}$ and taking the inverse limit, we get the desired split short exact sequence. 
\end{proof}

\subsection{Results deduced using the $p$-divisible rigid analytic group}\label{ss.semi-abeloid-results-using-p-div-groups}

Using $\widetilde{G}^\diamond_{\widehat{\mathbb{Z}}}$, we now deduce versions of \cref{theorem.cover-classification} and \cref{theorem.geometric-sen} for semi-abeloid varieties from the corresponding results for $p$-divisible rigid analytic groups, \cref{prop.cover-classification-p-divisible-group} and \cref{prop.geometric-sen-p-divisible}, but where the perfectoidness implication is weakened.

\begin{proposition}\label{prop.cover-classification-semi-abeloid}Suppose $E/C$ is a semi-abeloid variety, let $G$ be its associated $p$-divisible rigid analytic group, and let $H=\prod H_\ell \leq T_{\widehat{\mathbb{Z}}}E$ be a closed subgroup. For $f$ the dual Hodge-Tate map, if $f^{-1}(H_p \otimes C(-1))\neq 0$, then no open sub-diamond of $E_H^\diamond$  is represented by a perfectoid space. If $f^{-1}(H_p \otimes C(-1))=\{0\}$, then for $U=(G \times \{ e\}) \cdot \Delta_E$, an open neighborhoood of the diagonal $\Delta_E \le E \times E$, projection to the second factor $(E_H^\diamond \times E_H^\diamond)|_{U^\diamond} \rightarrow E_H^\diamond$ is  representable in perfectoid spaces. 
\end{proposition}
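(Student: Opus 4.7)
The plan is to deduce both assertions from \cref{prop.cover-classification-p-divisible-group} applied to the $p$-divisible rigid analytic group $G \hookrightarrow E$. Since $\pi_1(E,e) = T_{\widehat{\mathbb{Z}}}E$ is abelian, $E_H$ inherits a group structure over $E$, and the open preimage $E_H^\diamond|_G := \pi_H^{-1}(G^\diamond)$ of $G$ under $\pi_H\colon E_H^\diamond \to E^\diamond$ is an open subgroup. Writing $H = H_p \times H^{(p)}$ and applying the splitting $\widetilde{G}^\diamond_{\widehat{\mathbb{Z}}} \cong \widetilde{G}^\diamond \times \underline{T_{\widehat{\mathbb{Z}}^{(p)}}E}$ from the preceding lemma, one obtains a canonical identification
\[
E_H^\diamond|_G \;\cong\; G_{H_p}^\diamond \times \underline{T_{\widehat{\mathbb{Z}}^{(p)}}E / H^{(p)}},
\]
under which the natural inclusion $G_{H_p}^\diamond = \widetilde{G}^\diamond/H_p \hookrightarrow E_H^\diamond$ preserves the identity. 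This identification is the common core of both arguments.

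For the non-perfectoidness assertion, I would adapt the derivation argument from \cref{prop.cover-classification-p-divisible-group}(2). Assume $f^{-1}(H_p\otimes C(-1))\neq 0$ and that some open $V \subseteq E_H^\diamond$ is represented by a perfectoid space. After translating so that $V$ contains the identity, replacing $V$ by the open perfectoid $V\cap E_H^\diamond|_G$, and shrinking further, I may assume $V = \Spa(R,R^+)^\diamond$ is affinoid perfectoid and contained in $E_H^\diamond|_G$. Letting $W' \subseteq \Lie G$ be the nonzero $C$-span of $f^{-1}(H_p\otimes C(-1))$, letting $G'$ be the $p$-divisible rigid analytic group with triple $(H_p, W', f|_{W'})$, and forming the identity-preserving composition
\[
s\colon G'^{,\diamond} = \widetilde{G'}^\diamond/H_p \to \widetilde{G}^\diamond/H_p = G_{H_p}^\diamond \hookrightarrow E_H^\diamond,
\]
I pick an affinoid open $V' = \Spa(A,A^+) \subseteq s^{-1}(V)$ containing $e_{G'}$ and a nonzero $\partial \in \Lie G' = W'$; the induced continuous $C$-algebra map $R \to A$ composed with the derivation $\partial\colon A \to C$ produces a continuous $C$-linear derivation $\partial_R\colon R \to C$. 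To verify nonvanishing, I would choose $\ell \in (\Lie G)^*$ with $\ell(\partial)\neq 0$ and take $b = \ell \circ \log_G \in \mathcal{O}(G)$ (available since $G$ is a $p$-divisible rigid analytic group and therefore admits a global logarithm): pulling $b$ back to $R$ via $V \hookrightarrow E_H^\diamond|_G \to G^\diamond$ yields an element on which $\partial_R$ evaluates to $\ell(\partial)\neq 0$, contradicting \cref{lem.no-derivation}.

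For relative representability under $f^{-1}(H_p\otimes C(-1)) = 0$, I would use the group structure on $E_H^\diamond$ to exhibit an isomorphism
\[
\phi\colon E_H^\diamond|_G \times_{\Spd C} E_H^\diamond \;\xrightarrow{\sim}\; (E_H^\diamond \times_{\Spd C} E_H^\diamond)|_{U^\diamond}, \quad (y, z) \mapsto (y+z, z),
\]
with inverse $(y_1, y_2) \mapsto (y_1 - y_2, y_2)$; well-definedness follows because $\pi_H$ is a group homomorphism, so $y_1 - y_2 \in E_H^\diamond|_G$ iff $\pi_H(y_1) - \pi_H(y_2) \in G$. Under $\phi$, projection to the second factor on the target corresponds to the second projection on the source, which is visibly relatively representable in perfectoid spaces as soon as $E_H^\diamond|_G$ is representable by a perfectoid space. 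By \cref{prop.cover-classification-p-divisible-group}(1) applied to $(G, H_p)$, $G_{H_p}^\diamond$ is perfectoid, and $\underline{S}$ for $S$ profinite is perfectoid by \cite[Lemma 10.13]{scholze2017etale}, so the splitting above exhibits $E_H^\diamond|_G$ as perfectoid. The hard part, I expect, is the non-perfectoidness step: one must adapt the derivation technique to a base lacking global functions (an abeloid quotient of $E$ has none), which I would circumvent precisely by pulling back the global logarithm on the open $p$-divisible subgroup $G \subseteq E$.
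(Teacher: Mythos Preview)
Your proof is correct, and the perfectoidness direction matches the paper's argument essentially verbatim: the paper writes the isomorphism pointwise over a test object $T \to E_H^\diamond$ as $(g,t) \mapsto (g \cdot x(t), x(t), t)$, which is your $(y,z) \mapsto (y+z,z)$ unpacked.

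For the non-perfectoidness direction you and the paper share the same core idea---reduce to $G$ via its inclusion/action on $E$---but execute it differently. The paper does not re-run the derivation argument. Given a perfectoid open $V$ and a point $u \in V(C)$, it observes that $V \cap (\widetilde{G}^\diamond/H_p)\cdot\{u\}$ is, after translation by $u$, a non-empty open of $G_{H_p}^\diamond$, and is perfectoid because it sits as a Zariski-closed subspace of the perfectoid $V \cap (\widetilde{G}^\diamond_{\widehat{\mathbb{Z}}}/H)\cdot\{u\}$; this contradicts \cref{prop.cover-classification-p-divisible-group}(2) as a black box. Your route reopens the proof of that proposition and transports the derivation $\partial$ by hand, which works but is redundant once you have already arranged $V \subseteq E_H^\diamond|_G \cong G_{H_p}^\diamond \times \underline{T_{\widehat{\mathbb{Z}}^{(p)}}E/H^{(p)}}$: at that point you could simply slice down to the $G_{H_p}^\diamond$ factor and cite the proposition.

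Two small points. First, your step ``after translating so that $V$ contains the identity'' presupposes that $V$ has a $C$-point; the paper does not assume this, but instead takes an arbitrary rank-one geometric point $u:\Spa(C')\to V$ and base-changes to $C'$. Second, the assertion $\pi_1(E,e)=T_{\widehat{\mathbb{Z}}}E$ holds for abeloids but not in general for semi-abeloids (cf.\ the remark following \cref{def.prof-etale-Etilde-cover}); fortunately your argument only needs that $E_H^\diamond=\widetilde{E}^\diamond/H$ is a group diamond, which follows directly from $\widetilde{E}^\diamond$ being one.
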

\begin{proof}
Suppose first that $f^{-1}(H_p\otimes C(-1))\neq 0$. Suppose $V \subseteq E_H^\diamond=\widetilde{E}^\diamond/H$ is a non-empty perfectoid open. If there is a point $u: \Spa(C) \rightarrow V$, then $V \cap (\widetilde{G}^\diamond/H_p) \cdot \{u\}$ is identified with a non-empty open subset of $\widetilde{G}^\diamond$ which is perfectoid as a Zariski closed subset of the open perfectoid $V \cap ((\widetilde{G}^\diamond_{\widehat{\mathbb{Z}}}/H) \cdot \{u\})$\footnote{Here we implicitly used that if $\Spd(R,R^+)$ is perfectoid, then for any finite morphism $(R,R^+)\to (S,S^+)$ the associated diamond $\Spd(S,S^+)$ is perfectoid, c.f. the proof of \cref{corollary.variety with gg diff}.}. This contradicts \cref{prop.cover-classification-p-divisible-group}. In general, there is always a geometric rank-1 point $\Spa(C') \rightarrow V$, for some algebraically closed non-archimedean extension $C'/C$, thus by extending scalars from $C$ to $C'$ we reduce to case where there is such a point $u$ of $V$.   

Suppose now that $f^{-1}(H_p\otimes C(-1))=0$. Then $\widetilde{G}^\diamond/H_p$ is perfectoid by \cref{prop.cover-classification-p-divisible-group}, and it follows that 
\[ \widetilde{G}^\diamond_{\widehat{\mathbb{Z}}}/H=(\widetilde{G}^\diamond/H_p) \times (\underline{T_{\widehat{\mathbb{Z}}^{(p)}}E }/ \prod_{\ell\neq p}H_\ell) \]
is perfectoid as it is a product of two perfectoid spaces. The open neighborhood $U= (G\times \{ e\}) \cdot \Delta_E$ is the pre-image of $G \leq E$ under the map $E \times E \rightarrow E, (x_1, x_2)\mapsto x_1 x_2^{-1}$. Thus, for any perfectoid space $T$ and $x: T \rightarrow \widetilde{E}^\diamond/H$, we obtain an isomorphism
\[ \widetilde{G}^\diamond_{\widehat{\mathbb{Z}}}/H \times T \xrightarrow{(g, t) \mapsto (g \cdot x(t), x(t), t)} (\widetilde{E}^\diamond/H \times \widetilde{E}^\diamond/H)|_{U^\diamond} \times_{\widetilde{E}^\diamond/H} T \subseteq \widetilde{E}^\diamond/H \times T  \] 
Indeed, a point $(a,t)$ is in the codomain if and only if $\pi(a)\pi(x(t))^{-1}=\pi(ax(t)^{-1}) \in G^\diamond \leq E^\diamond$ under the map $\pi: \widetilde{E}^\diamond/H \rightarrow E^\diamond$, and $\widetilde{G}_{\widehat{\mathbb{Z}}}^\diamond/H \leq \widetilde{E}^\diamond/H$ is precisely the pre-image of $G$ under $\pi$. 
\end{proof}

\begin{proposition}\label{prop.geometric-sen-semi-abeloid}
Let $E/C$ be a semi-abeloid variety. Suppose $K$ is a compact $p$-adic lie group and $\rho: T_{\widehat{\mathbb{Z}}} E \rightarrow K$ is a continuous homomorphism and let $E_\infty$ be the associated $K$-torsor. Let $\rho_p=\rho|_{T_p E}$. The geometric Sen morphism for $E_\infty$ is the constant composition of $d\rho_p$  with the dual Hodge-Tate map $f: \Lie E \rightarrow T_p E\otimes C(-1)$  i.e.
\[ \kappa_{E_\infty} = \left( (d\rho_p \otimes C) \circ f \right) \otimes_C \widehat{\mathcal{O}}: T_{E/C} \rightarrow \Lie (\mathrm{Im}(\rho_p)) \otimes_{\mathbb{Q}_p} \widehat{\mathcal{O}}(-1) \subseteq (\Lie K)_\rho \otimes_{\ul{\mathbb{Q}_p}} \widehat{\mathcal{O}}(-1)\]  
If $(d\rho_p \otimes C) \circ f$ is not injective, then no open subdiamond of $E_\infty^\diamond$ is perfectoid. If it is injective, then, for $G$ the $p$-divisible rigid analytic group of $E$ and $U=(G \times \{ e\}) \cdot \Delta_E$, an open neighborhood of the diagonal $\Delta_E \leq E \times E$, projection to the second factor $(E_\infty^\diamond \times E_\infty^\diamond)|_{U^\diamond} \rightarrow E_\infty^\diamond$ is representable in perfectoid spaces. 

\end{proposition}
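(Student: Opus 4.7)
The plan is to deduce this from the corresponding result for $p$-divisible rigid analytic groups (\cref{prop.geometric-sen-p-divisible}) by passing to the associated $p$-divisible rigid analytic group $G \subset E$ from \cref{cons.p-divisible group attached to abeloid}, and then to combine the resulting formula with \cref{prop.cover-classification-semi-abeloid} to obtain the perfectoidness statements.

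For the formula, I would first observe that $\widetilde{E}$ is canonically $E$-equivariant (under translations lifting the translations on $E$), and hence so is $E_\infty = \widetilde{E} \wedge^{\rho} K$. By pullback-functoriality of the geometric Sen morphism (\cite[Theorem 1.0.3-(5)]{JuangeometricSen}) applied to translations, $\kappa_{E_\infty}$ is therefore translation-invariant on $E$, so it suffices to compute it on any open neighborhood of the identity, which I take to be $G$. Under the isomorphism
\[ \widetilde{G}^\diamond \times \ul{T_{\widehat{\mathbb{Z}}^{(p)}}E} \simeq \widetilde{G}^\diamond_{\widehat{\mathbb{Z}}} \]
established before \cref{prop.cover-classification-semi-abeloid}, the pullback of $E_\infty$ along $G \hookrightarrow E$ is the push-out along $\rho$ of $\widetilde{G} \times \ul{T_{\widehat{\mathbb{Z}}^{(p)}}E}$. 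Since $K$ is a $p$-adic Lie group and therefore has no small subgroups while $T_{\widehat{\mathbb{Z}}^{(p)}}E$ is profinite of prime-to-$p$ order, $\rho^{(p)} := \rho|_{T_{\widehat{\mathbb{Z}}^{(p)}}E}$ has finite image, so the contribution of $\rho^{(p)}$ to $E_\infty|_G$ is a finite \'etale $\mathrm{Im}(\rho^{(p)})$-torsor twist of $\widetilde{G} \wedge^{\rho_p} K$. Pulling back along this finite \'etale cover to trivialize the prime-to-$p$ part and invoking pullback-functoriality once more shows the Sen morphism is unchanged; applying push-out functoriality (\cite[Theorem 1.0.4]{JuangeometricSen}) to $\rho_p$ together with \cref{lemma.geometric-sen-p-divisible-universal-cover} for $\widetilde{G}$ then gives the claimed formula, using the natural identifications $\Lie G = \Lie E$ and $T_p G = T_p E$.

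For the perfectoidness statements, let $H := \ker \rho$ so that $H_p = \ker \rho_p$, and let $\bar\rho \colon T_{\widehat{\mathbb{Z}}}E / H \to K$ be the induced injection. As in the proof of \cref{prop.geometric-sen-p-divisible}, a continuous section of the submersion $K \twoheadrightarrow \mathrm{Im}(\bar\rho) \backslash K$ yields a decomposition
\[ E_\infty^\diamond \cong E_H^\diamond \times \ul{T} \]
for a compact $T \subseteq K$; since $\mathrm{Im}(\bar\rho) \backslash K$ is a compact $p$-adic manifold (hence totally disconnected and Hausdorff, and so profinite), $\ul{T}$ is represented by a perfectoid space by \cite[Lemma 10.13]{scholze2017etale}. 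If $(d\rho_p \otimes C) \circ f$ is not injective, then $f^{-1}(H_p \otimes C(-1)) \neq 0$, so by \cref{prop.cover-classification-semi-abeloid} no open subdiamond of $E_H^\diamond$ is perfectoid; any perfectoid open of $E_\infty^\diamond$, intersected with a suitable fiber over $\ul{T}$, would then yield a perfectoid open of $E_H^\diamond$, a contradiction. If it is injective, \cref{prop.cover-classification-semi-abeloid} gives relative representability of $(E_H^\diamond \times E_H^\diamond)|_{U^\diamond} \to E_H^\diamond$ in perfectoid spaces, and the analogous statement for $E_\infty^\diamond$ then follows by taking the product with $\ul{T} \times \ul{T}$ and projecting onto the second factor, using that $U^\diamond$ constrains only the $E^\diamond$-component and that $\ul{T} \times \ul{T} \to \ul{T}$ is representable in perfectoid spaces.

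The main technical point I expect to verify carefully is that the finite \'etale $\mathrm{Im}(\rho^{(p)})$-torsor twist genuinely contributes trivially to the Sen morphism (which amounts to the vanishing of the Sen morphism on a trivial $K$-torsor, accessed after pullback along the trivializing finite \'etale cover and invoking pullback-functoriality), together with the identification of $\mathrm{Im}(\bar\rho) \backslash K$ as profinite, which is what makes $\ul{T}$ perfectoid and the decomposition $E_\infty^\diamond \cong E_H^\diamond \times \ul{T}$ usable in the perfectoidness arguments.
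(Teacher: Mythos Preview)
Your proposal is correct and follows essentially the same strategy as the paper: reduce the computation of $\kappa_{E_\infty}$ to the $p$-divisible case via the open subgroup $G\hookrightarrow E$, and then deduce the perfectoidness statements from \cref{prop.cover-classification-semi-abeloid} via the decomposition $E_\infty^\diamond\cong E_H^\diamond\times\ul{T}$ exactly as in the proof of \cref{prop.geometric-sen-p-divisible}. The only cosmetic difference is in how the reduction to $G$ is carried out: the paper argues pointwise, translating $G_{C'}$ to a neighborhood of each rank-one point $x\colon\Spa(C')\to E$ and checking the Sen morphism there, whereas you argue globally by invoking $E$-equivariance of $\widetilde{E}$ (hence of $E_\infty$) and pullback functoriality to conclude translation-invariance of $\kappa_{E_\infty}$; note here that the lift of translation by $e_0\in E(C)$ to $\widetilde{E}$ requires a choice of $\tilde e_0\in\widetilde{E}(C)$ and is thus not literally canonical, but this is harmless since only the isomorphism class of the pulled-back torsor matters.
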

\begin{proof}
After the computation of the geometric Sen morphism, the rest of the result follows from \cref{prop.cover-classification-semi-abeloid} by an argument similar to the deduction of the second part \cref{prop.geometric-sen-p-divisible} from \cref{prop.cover-classification-p-divisible-group}. For the computation of the geometric Sen morphism, similar to the proof of \cref{prop.geometric-sen-p-divisible}, it suffices to compute the geometric Sen morphism for covers $\widetilde{E}/H^{(p)} \rightarrow E$ where $H^{(p)}$ is an open subgroup of $T_{\widehat{\mathbb{Z}}^{(p)}} E$ (for a given $\rho$, the relevant subgroup $H^{(p)}$ is the kernel of the restriction of $\rho$ to $T_{\widehat{\mathbb{Z}}^{(p)}} E$). For any rank one point, $x: \Spa(C') \rightarrow E$, multiplication of $x$ by $G_{C'}$ gives an open immersion $G_{C'} \rightarrow E_{C'}$ sending the identity to $x$ and extending to an isomorphism of $(\widetilde{G}_{\widehat{\mathbb{Z}}}/H^{(p)})_{C'}$ with the restriction of $\widetilde{E}_{C'}/H^{(p)}$ along this open immersion. Thus we deduce the computation of the geometric Sen morphism at this point from \cref{prop.geometric-sen-p-divisible}, and the result follows since this morphism is uniquely determined by its values at rank one points. 
\end{proof}

\subsection{Perfectoid covers}\label{ss.perfectoid-covers}

In this section we establish 

\begin{proposition}\label{prop.perfectoid-covers-abelian}Let $E/C$ be a semi-abelian variety. Then, for $H=\prod H_\ell \leq T_{\widehat{\mathbb{Z}}}E$ a closed subgroup, $E_H\in E_\proet$ is perfectoid if $f^{-1}(H_p\otimes C(-1))=\{0\}$ for $f: \Lie E \rightarrow T_p E \otimes C(-1)$ the dual Hodge-Tate map. 
\end{proposition}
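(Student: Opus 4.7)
The plan is to follow the strategy sketched in the introduction, proceeding in four stages: (i) reduce to the case of an abelian variety of good reduction via Raynaud uniformization; (ii) reduce to a principally polarized abelian variety with $H_p$ maximal isotropic; (iii) apply the results of \cite{ScholzeOntorsioninthecohomologyoflocallysymmetricvarieties} on the canonical subgroup to show the tower is eventually given by Frobenius lifts; (iv) conclude using the classical perfectoidness of Frobenius-lift towers.

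First I would use Raynaud uniformization to present the abeloid quotient of $E$ as $E'/M$ with $E'$ an extension of a formal abelian scheme $B$ of good reduction by a split formal torus, after enlarging $C$ if necessary (harmless for perfectoidness of the object in $E_\proet$). Passing to the universal cover dissolves the lattice $M$, and the torus contribution (both the original torus and the split torus from uniformization) is handled by the standard perfectoid $\widetilde{\mathbb{G}_m^d}$. Because the dual Hodge-Tate map for $E$ restricts compatibly to the one for $B$ and is already an isomorphism on the torus part of $\Lie E$, the hypothesis $f^{-1}(H_p\otimes C(-1))=\{0\}$ transfers to the image of $H_p$ in $T_p B$, reducing us to the case where $E=B$ is an abelian variety of good reduction.

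Next I would reduce to the principally polarized, maximal-isotropic case. By Zarhin's trick, $(B \times B^\vee)^4$ admits a principal polarization and still has good reduction. Pulling back along the natural projections and using \cref{lemma.pro-etale-cover-perf-is-perf} (almost purity), I can enlarge $H_p$ inside $T_p((B\times B^\vee)^4)$ to a maximal isotropic subgroup for the Weil pairing while preserving the injectivity hypothesis: indeed, the hypothesis $f^{-1}(H_p\otimes C(-1))=\{0\}$ is an open condition on the Grassmannian of Lagrangian subspaces, and the condition is non-empty because $f$ is an injection of a subspace of dimension equal to half the total rank.

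With these reductions in place, the pair $(B, H_p)$ defines a $C$-point of the Siegel modular variety at level $\Gamma_0(p^\infty)$, and the injectivity hypothesis says exactly that $H_p\otimes C(-1)$ is transverse to the Hodge-Tate filtration $f(\Lie B)\subset T_pB\otimes C(-1)$. By the relation between the Hodge-Tate period map and the canonical subgroup established in \cite{ScholzeOntorsioninthecohomologyoflocallysymmetricvarieties}, transversality to the Hodge-Tate filtration places $(B, H_p)$, up to an innocent isogeny, in the open anti-canonical locus. Consequently the tower of finite \'{e}tale covers $B \leftarrow B/(H_p\cap B[p]) \leftarrow B/(H_p \cap B[p^2])\leftarrow \cdots$ is eventually realized, on suitable integral models, by a compatible sequence of lifts of Frobenius. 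Since a tower of rigid analytic finite \'{e}tale covers whose transition maps are eventually Frobenius lifts is perfectoid as an object of the pro-\'{e}tale site --- the $p$-adic completion of the colimit along Frobenius is perfectoid and the statement glues locally --- we conclude.

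The main obstacle is stage (iii): correctly identifying the transversality hypothesis on $f$ with the anti-canonical condition, tracking the \emph{innocent isogeny} needed to realize the tower as a tower of Frobenius lifts, and chaining this back through the earlier reductions so that the resulting perfectoidness on $(B\times B^\vee)^4$ descends to the desired statement for the original $E$. Stages (i), (ii), and (iv) are largely reorganizations of known techniques (\cite{Blakestad} for the first two and the original perfectoid construction of \cite{ScholzePerfectoidspaces} for the fourth).
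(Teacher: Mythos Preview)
Your outline follows the same four-stage architecture as the paper's proof, and stages (i), (iii), (iv) are essentially correct. However, stage (ii) has a genuine gap.

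When you pass from $B$ to $(B\times B^\vee)^4$ via Zarhin's trick, the image of $H_p$ in $T_p((B\times B^\vee)^4)$ has no reason to be \emph{isotropic} for the Zarhin polarization, and if it is not isotropic there are no Lagrangians containing it at all, so your enlargement step is vacuous. The paper inserts an additional move here: after Zarhin, replace the principally polarized $A$ by $A\times A$ equipped with the off-diagonal polarization $\begin{psmallmatrix}0 & \lambda \\ \lambda & 0\end{psmallmatrix}$, and embed $H_p$ in the first factor as $H_p\times\{0\}$; this is automatically isotropic. The descent back is also not ``pulling back along projections'' but rather realizing $\widetilde{A}/H$ as the fibre over the identity of $\widetilde{A\times B}/H \to \widetilde{B}$ and invoking \cref{lem.product of perfedctoids in proet}.

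Your justification for enlarging $H_p$ to a maximal isotropic while preserving transversality (``an open condition on the Grassmannian of Lagrangian subspaces, non-empty because $f$ is an injection of half-dimensional subspace'') is also not a proof: the transversality condition is with respect to a $C$-subspace $\Lie A(1)$, so you are asking for a $\mathbb{Q}_p$-rational Lagrangian containing $H_p$ whose $C$-span avoids a fixed $C$-Lagrangian. Openness over $C$ does not by itself produce $\mathbb{Q}_p$-points. The paper handles this with a short inductive linear-algebra lemma (\cref{lemma.maximal-isotropic}) that adds one $\mathbb{Q}_p$-vector at a time inside $H_p^\perp$, using that the map $V_C^\perp \to T_pA\otimes C/\Lie A(1)$ is surjective. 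Finally, in stage (iii) the ``innocent isogeny'' is made precise: one computes that the Hodge--Tate period of $(A/H_n, H)$ in the standard chart is $\begin{psmallmatrix}p^n M \\ \mathrm{Id}\end{psmallmatrix}$, which converges to $\begin{psmallmatrix}0 \\ \mathrm{Id}\end{psmallmatrix}$, so for $n\gg 0$ the pair lies in the open anti-canonical locus of \cite{ScholzeOntorsioninthecohomologyoflocallysymmetricvarieties}; this is the concrete content you should supply rather than invoking transversality abstractly.
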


Combined with the computation of the geometric Sen morphism \cref{prop.geometric-sen-semi-abeloid}, we obtain also
\begin{corollary}\label{cor.geometric-sen-perfectoid-semi-abelian}
Let $E/C$ be a semi-abelian variety.
Suppose $K$ is a compact $p$-adic lie group and $\rho: T_{\widehat{\mathbb{Z}}} E \rightarrow K$ is a continuous homomorphism, and let $\rho_p=\rho|_{T_p E}$. If the associated geometric Sen morphism is injective at each geometric point (equivalently, $d\rho_p \otimes C \circ f$ is injective), then the associated $K$-torsor $E_\infty/E$ is perfectoid. 
\end{corollary}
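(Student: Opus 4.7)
The plan is to reduce the corollary to \cref{prop.perfectoid-covers-abelian} (which is the main technical input and will be used as a black box) by first identifying the kernel of $d\rho_p \otimes C$, and then upgrading the resulting perfectoidness of the profinite-\'etale cover $E_H$ to perfectoidness of the $K$-torsor $E_\infty$ by the same Galois-torsor reshuffling used at the end of the proof of \cref{prop.geometric-sen-p-divisible}.

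First I would set $H := \ker\rho$, a closed subgroup of $T_{\widehat{\mathbb{Z}}}E = \prod_\ell T_\ell E$; by closedness it decomposes as $H = \prod_\ell H_\ell$ with $H_p = \ker\rho_p$. The derivative of $\rho_p$ has kernel $H_p \otimes_{\mathbb{Z}_p}\mathbb{Q}_p$ on Lie algebras, so $\ker(d\rho_p \otimes_{\mathbb{Q}_p} C) = H_p \otimes_{\mathbb{Z}_p} C$, and the injectivity hypothesis on $(d\rho_p \otimes C) \circ f$ becomes $f^{-1}(H_p \otimes C(-1)) = \{0\}$. Applying \cref{prop.perfectoid-covers-abelian}, $E_H \in E_\proet$ is perfectoid.

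Next I would upgrade this to perfectoidness of $E_\infty$. The $K$-torsor $E_\infty \in E_\profet$ corresponds to the profinite $\pi_1(E,e)$-set $K$ with left multiplication through $\rho$; its $\pi_1$-orbits are the left cosets $\mathrm{Im}(\rho)\cdot k$, each equivariantly isomorphic to $\pi_1/H$. A continuous section of the submersion $K \to \mathrm{Im}(\rho)\backslash K$ of compact $p$-adic manifolds (existing globally, or at worst locally on a clopen cover, as in the proof of \cref{prop.geometric-sen-p-divisible}) produces a $\pi_1$-equivariant isomorphism of profinite sets $K \cong (\pi_1/H) \times (\mathrm{Im}(\rho)\backslash K)$, and correspondingly an identification in $E_\proet$
\[ E_\infty \;\cong\; E_H \times \underline{\mathrm{Im}(\rho)\backslash K}. \]
The product of a perfectoid pro-\'etale object with a constant profinite sheaf is perfectoid: on an affinoid perfectoid open $\Spa(R_\infty, R_\infty^+) \subseteq E_H$, the corresponding open of $E_\infty$ becomes $\Spa(C(\mathrm{Im}(\rho)\backslash K, R_\infty), C(\mathrm{Im}(\rho)\backslash K, R_\infty^+))$, which is again affinoid perfectoid. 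Hence $E_\infty$ is perfectoid.

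All the real content is in \cref{prop.perfectoid-covers-abelian}; the main obstacle is thus already out of the way by the time we reach this corollary, and what remains is only the identification of kernels and the torsor manipulation above.
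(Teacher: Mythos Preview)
Your proposal is correct and follows exactly the approach the paper takes: the paper's proof is the single sentence ``We argue from \cref{prop.perfectoid-covers-abelian} as in the proof of \cref{prop.geometric-sen-p-divisible},'' and you have simply unpacked that sentence, identifying $H=\ker\rho$, translating the injectivity hypothesis into $f^{-1}(H_p\otimes C(-1))=\{0\}$, invoking \cref{prop.perfectoid-covers-abelian}, and then carrying out the same section-of-a-submersion splitting $E_\infty \cong E_H \times \underline{\mathrm{Im}(\rho)\backslash K}$ used in the proof of \cref{prop.geometric-sen-p-divisible}.
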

\begin{proof} 
We argue from \cref{prop.perfectoid-covers-abelian} as in the proof of \cref{prop.geometric-sen-p-divisible}. 
\end{proof}

Before beginning the proof of \cref{prop.perfectoid-covers-abelian}, we prove some lemmas that will allow us to reduce to the case of an abelian variety of good reduction. 

\begin{lemma}\label{lemma.semi-abelioid-to-abeloid}
Let $E$ be a semi-abeloid variety over $C$, written as an extension
\[ 0 \rightarrow T \rightarrow E \rightarrow A \rightarrow 0 \]
where $T\cong \mathbb{G}_m^{\mathrm{ad},k}$ is a torus and $A$ is an abeloid variety. 
Suppose $H \leq T_p E$ is a summand such that $H \otimes C \cap \Lie E(1)=\{0\}$. Then $H \hookrightarrow T_p A$ with $H\otimes C \cap \Lie A(1)=\{0\}$ and $E_H=\widetilde{E}/H$ is perfectoid if $A_H=\widetilde{A}/H$ is perfectoid. 
\end{lemma}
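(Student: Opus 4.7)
The first claim is purely algebraic. The plan is to apply $T_p(-)$ and $\Lie(-)$ to the short exact sequence $0\to T \to E \to A \to 0$, producing a commutative diagram of short exact sequences of $C$-vector spaces linked by dual Hodge-Tate maps. The crucial input is that for a split torus $T\cong \mathbb{G}_m^{\mathrm{ad},k}$ the dual Hodge-Tate map $\Lie T \to T_pT \otimes C(-1)$ is an isomorphism, identifying $T_pT\otimes C$ with $\Lie T(1)\subseteq \Lie E(1)$. The hypothesis $H \otimes C \cap \Lie E(1) = 0$ thus forces $H \cap T_pT = 0$ as $\mathbb{Z}_p$-modules, giving the injection $H \hookrightarrow T_pA$; a short diagram chase (lifting any $v \in H\otimes C \cap \Lie A(1)$ to $\Lie E(1)$ via a preimage in $T_pE\otimes C$, then reducing modulo $\Lie T(1) = T_pT \otimes C$) then yields $H \otimes C \cap \Lie A(1) = 0$.

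For the perfectoidness implication, the plan is to reduce locally to an external product $A_H \times \widetilde{T}$. Since $T\cong \mathbb{G}_m^{\mathrm{ad},k}$ is a split torus and the extension $E$ is classified by a $k$-tuple of line bundles on $A$, the $T$-torsor $\pi: E \to A$ is Zariski-locally trivial; I fix a Zariski affinoid cover $\{U_i\}$ of $A$ together with sections $s_i: U_i \to E$ giving trivializations $E|_{U_i} \cong U_i \times T$. Since $H$ is a direct summand of $T_pE$ with $H \cap T_pT = 0$, a short argument (using that $T_pT$ is saturated in $T_pE$, so its image in the free complement $H^c$ is saturated there as well) shows that $H' := \pi_*(H) \subseteq T_pA$ is a direct summand, which allows the tautological isomorphism $H' \cong H$ to be extended to a splitting $\sigma: T_pA \to T_pE$ of $p: T_pE \to T_pA$ with $\sigma(H') = H$; a similar splitting is chosen for the prime-to-$p$ part. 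After adjusting each $s_i$ by an appropriate $U_i \to T$ so that the induced lift $\pi_1(U_i) \to T_{\widehat{\mathbb{Z}}}E$ factors through $\sigma$, these compatible choices will identify the pro-\'etale restriction $E_H|_{E|_{U_i}}$ with the external product $A_H|_{U_i} \times \widetilde{T}$ in $(E|_{U_i})_\proet$, where $\widetilde{T} := (T)_n$ with transitions $[n/m]$ is the perfectoid cover of $T$ by all its multiplication-by-$n$ maps.

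Since $A_H|_{U_i}$ is perfectoid as a Zariski restriction of the perfectoid $A_H$, and $\widetilde{T}$ is standard affinoid perfectoid, the external product is perfectoid by \cref{lem.product of perfedctoids in proet}; hence $E_H|_{E|_{U_i}}$ is perfectoid for each $i$, and by locality of the perfectoid property in $E_\proet$, it follows that $E_H$ is perfectoid. The main obstacle will be the compatibility of the topological trivialization $s_i$ with the Tate-module splitting $\sigma$, and verifying that the resulting local description is an honest isomorphism of pro-\'etale objects rather than just a matching of Galois groups: the summand hypothesis on $H$ is exactly the input that allows these choices to be made compatibly, avoiding a twisted-product structure in the $T$-direction that would otherwise require an additional descent argument to handle.
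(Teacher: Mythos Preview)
Your treatment of the first claim matches the paper's: both use that the dual Hodge--Tate map for $T$ is an isomorphism, so $T_pT\subseteq T_pT\otimes C=\Lie T(1)\subseteq\Lie E(1)$, giving $H\cap T_pT=0$; and both observe that $\Lie E(1)$ is exactly the preimage of $\Lie A(1)$ under $T_pE\otimes C\to T_pA\otimes C$, which yields the second transversality.

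For the perfectoidness implication, your plan diverges from the paper's and has a real gap at precisely the point you yourself flag. You want to trivialize the $T$-torsor $E\to A$ over Zariski opens $U_i\subseteq A$ and then identify $E_H|_{E|_{U_i}}$ with $A_H|_{U_i}\times\widetilde{T}$. But for this product decomposition you need the $T_{\widehat{\mathbb{Z}}}T$-component of $s_i^*\widetilde{E}$ (projected via your splitting $\sigma$) to be the trivial torsor over $U_i$, and your proposed adjustment of $s_i$ by a map $U_i\to T$ only modifies this class by an element in the image of the Kummer map $T(U_i)\to H^1_{\profet}(U_i,T_{\widehat{\mathbb{Z}}}T)$. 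You give no reason the defect lies in this image, and the summand hypothesis on $H$ does not help: it provides a splitting of the short exact sequence of Tate modules, not a decomposition of the \emph{torsor} $\widetilde{E}|_{E|_{U_i}}$. Concretely, the $n$-th term of $\widetilde{E}|_{E|_{U_i}}$ is $E|_{[n]_A^{-1}U_i}$, and choosing trivializations of all these $T$-torsors compatibly with the transition maps $[n/m]_E$ is exactly the unsolved problem.

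The paper bypasses this by localizing on the perfectoid target rather than on $A$. From the exact sequence $0\to\widetilde{T}\to\widetilde{E}/H\to\widetilde{A}/H\to 0$ one sees that $\widetilde{E}/H$ is a $\widetilde{T}$-torsor over $\widetilde{A}/H$. The key observation is that $\widetilde{T}\cong(\mathcal{O}^{\flat,\times})^k$ as a sheaf, so once $\widetilde{A}/H$ is perfectoid this torsor is analytically locally trivial there. Covering $\widetilde{A}/H$ by affinoid perfectoid opens $U$ with $(\widetilde{E}/H)\times_{\widetilde{A}/H}U\cong\widetilde{T}\times_{\Spa C}U$ and invoking \cref{lem.product of perfedctoids in proet} then finishes the argument. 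The essential move you are missing is to carry out the trivialization on the \emph{perfectoid} base $\widetilde{A}/H$, where it becomes automatic, rather than on $A$.
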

\begin{proof}
For the first part, suppose $H \otimes C \cap \Lie E(1) = \{0\}$. Since $T_p T \subseteq T_p T \otimes C = \Lie T(1) \subseteq \Lie E(1)$, we find $H \hookrightarrow T_p A$. The statement that $H \otimes C \cap \Lie A(1)=\{0\}$ viewing $H$ as a subgroup of $T_p A$ is equivalent to the statement that $H \otimes C \cap \Lie E(1) =\{0\}$, since $\Lie E(1)$ is the preimage in $T_p E \otimes C$ of $\Lie A(1)$ in $T_p A \otimes C$. 

For the second part, note that we have an exact sequence of pro-systems $0 \rightarrow \widetilde{T} \rightarrow \widetilde{E} \rightarrow \widetilde{A} \rightarrow 0$ and an induced quotient exact sequence
\[ 0 \rightarrow \widetilde{T} \rightarrow \widetilde{E}/H \rightarrow \widetilde{A}/H \rightarrow 0. \]
In particular, $\widetilde{E}/H$ is a $\widetilde{T}$-torsor over $\widetilde{A}/H$. Since $\widetilde{T}\cong (\mathcal{O}^{\flat,\times})^k$, if $\widetilde{A}/H$ is perfectoid then this torsor is analytically locally trivial on $\widetilde{A}/H$. Thus we may find a cover of $\widetilde{A}/H$ by open affinoid perfectoids $U$ such that $\widetilde{E}/H \times_{\widetilde{A}/H} U \cong \widetilde{T} \times_{\Spa C} U$, thus $\widetilde{E}/H$ is perfectoid by \cref{lem.product of perfedctoids in proet}. 
\end{proof}

\begin{lemma}\label{lemma.semi-abeloid-to-quotient}
Suppose $E/C$ is a semi-abeloid variety, $M \leq E(C)$ is a discrete free subgroup, and $A=E/M$. Suppose $H \leq T_p A$ is a closed subgroup such that $H \otimes C \cap \Lie A(1)=\{0\}$. Then, for $H' = H \cap T_p E$, $H' \otimes C \cap \Lie E(1)=\{0\}$, and if $\widetilde{E}/H'$ is perfectoid then $\widetilde{A}/H$ is perfectoid. 
\end{lemma}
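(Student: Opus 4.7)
For the first assertion, since $M$ is a discrete torsion-free subgroup of $E(C)$, the quotient map $\pi : E \to A$ is étale, yielding a canonical isomorphism $d\pi : \Lie E \xrightarrow{\sim} \Lie A$. The short exact sequence $0 \to M \to E \to A \to 0$ induces on $p$-adic Tate modules the exact sequence $0 \to T_p E \to T_p A \to M \otimes \mathbb{Z}_p \to 0$, which is compatible with the dual Hodge-Tate maps by functoriality. Thus $\Lie E(1)$ and $\Lie A(1)$ agree as subspaces of $T_p A \otimes C$, and the containment $H' \otimes C \subseteq H \otimes C$ gives $H' \otimes C \cap \Lie E(1) \subseteq H \otimes C \cap \Lie A(1) = \{0\}$.

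For the perfectoidness assertion, the plan is to first prove that the pullback $(\widetilde A/H) \times_A E$ is perfectoid over $E$, and then descend to $A$ using local triviality of the cover $\pi$. The natural map $\widetilde E^\diamond \to \widetilde A^\diamond \times_{A^\diamond} E^\diamond$ is equivariant for the inclusion $T_p E \hookrightarrow T_p A$, and by the universal property of extension of structure group for torsors, this identifies
\[ \widetilde A^\diamond \times_{A^\diamond} E^\diamond \;\cong\; \widetilde E^\diamond \times^{T_p E} T_p A \]
as $T_p A$-torsors over $E^\diamond$. Passing to the quotient by $H$ and choosing a set-theoretic splitting of the exact sequence of profinite groups $0 \to T_p E / H' \to T_p A/H \to (M \otimes \mathbb Z_p)/H_M \to 0$, where $H_M$ denotes the image of $H$ in $M \otimes \mathbb{Z}_p$, one obtains a (non-canonical) isomorphism of pro-\'etale sheaves over $E^\diamond$
\[ (\widetilde A^\diamond / H) \times_{A^\diamond} E^\diamond \;\cong\; (\widetilde E^\diamond / H') \times \underline{(M \otimes \mathbb Z_p)/H_M}. \]
Since $\widetilde E/H'$ is perfectoid by hypothesis and the profinite set $(M \otimes \mathbb{Z}_p)/H_M$ is represented by a perfectoid space over $\Spd C$, the right-hand side is perfectoid in $E_\proet$ by \cref{lem.product of perfedctoids in proet}.

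To descend, use that $M$ acts properly discontinuously on $E$: for each $a \in A$, I can choose a lift $e \in \pi^{-1}(a) \subseteq E$ and an open neighborhood $e \in U' \subseteq E$ such that $\pi|_{U'} : U' \xrightarrow{\sim} U \coloneqq \pi(U')$ is an isomorphism of rigid analytic spaces. Under this isomorphism, the restriction $(\widetilde A/H)|_U$ is identified with the restriction of the perfectoid object $(\widetilde A/H) \times_A E$ to $U'$, and is therefore perfectoid. As such opens $U$ cover $A$, this shows $\widetilde A / H$ is perfectoid as an object of $A_\proet$.

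The main technical step will be justifying the torsor identification carefully in the pro-\'etale setting and verifying that the (non-canonical) splitting arising from a set-theoretic section of the sequence of profinite groups suffices to give the product decomposition at the level of pro-\'etale sheaves, so that \cref{lem.product of perfedctoids in proet} can be applied.
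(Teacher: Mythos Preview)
Your proposal is correct and follows essentially the same strategy as the paper's proof. Both arguments identify $\widetilde{A}/H|_U$ (for $U \subseteq A$ admitting a section to $E$) with the pushout $s^*(\widetilde{E}/H') \wedge^{T_pE/H'} T_pA/H$, then use a continuous section of $T_pA/H \twoheadrightarrow (M\otimes\mathbb{Z}_p)/H_M$ to split this as $(\widetilde{E}/H')|_{U'} \times \underline{(M\otimes\mathbb{Z}_p)/H_M}$, which is perfectoid by \cref{lem.product of perfedctoids in proet}; the paper simply phrases the splitting step by reference to the analogous argument in the proof of \cref{prop.geometric-sen-p-divisible}. One minor point: you carry out the torsor identification at the diamond level but then invoke \cref{lem.product of perfedctoids in proet}, which is stated for objects of $E_\proet$; since both sides lie in $E_\profet$ and the identification comes from an isomorphism of the underlying profinite $\pi_1(E,e)$-sets (using a continuous section, which exists for any surjection of profinite sets), the isomorphism indeed holds in $E_\proet$, so this is only a presentational wrinkle.
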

\begin{proof}
Note that we have an exact sequence 
\[ 0 \rightarrow T_p E \rightarrow T_p A \rightarrow M \otimes \mathbb{Z}_p \rightarrow 0. \]
Since $\Lie A(1)=\Lie E(1) \subseteq T_p E \otimes C$, we always have $H' \otimes C \cap \Lie E(1) \subseteq H \otimes C \cap \Lie A(1)$; in particular if the latter is zero then so is the former. 

Now, assume that $H \cap \Lie A(1)=\{0\}$ and $\widetilde{E}/H'$ is perfectoid. We can cover $A$ by open subsets $U$ admitting a section $s: U \rightarrow E$; to finish the proof, it suffices to show that $\widetilde{A}/H|_U$ is perfectoid for any such $U$.  By \cref{lem.product of perfedctoids in proet}, $s^* (\widetilde{E}/H')$ is perfectoid. On the other hand, we have $\widetilde{A}/H|_U = s^*(\widetilde{E}/H') \wedge^{T_p E/H'} T_p A/H$ and we conclude that $\widetilde{A}/H|_U$ is perfectoid as in the proof of \cref{prop.geometric-sen-p-divisible}. 
\end{proof}

\begin{proof}[Proof of \cref{prop.perfectoid-covers-abelian}]
We first reduce to the case of an abelian variety with good reduction: \cref{lemma.semi-abelioid-to-abeloid} reduces us from the semi-abelian variety $E$ to its abelian quotient. Using the Raynaud uniformization of this abelian variety, \cref{lemma.semi-abeloid-to-quotient} reduces us to a semi-abelian variety whose abelian part has good reduction. Finally, by another application of \cref{lemma.semi-abelioid-to-abeloid}, we see it suffices to to assume that $E=A$ is an abelian variety with good reduction.

We next reduce to the case that $H_\ell= T_l A$ for all $\ell \neq p$ and $A$ is equipped with a principal polarization such that $H_p$ is a maximal isotropic subgroup of $T_p A$. 
To that end, first note that we may replace $A$ with $A'=A \times_{C} B$ for any good reduction abelian variety $B$: indeed, $T_{\widehat{\mathbb{Z}}}A$ is a closed subgroup of $T_{\widehat{\mathbb{Z}}}A'$ so that we may view $H$ also as a closed subgroup of $T_{\widehat{\mathbb{Z}}} A'$, and the condition on the dual Hodge-Tate map is evidently equivalent for either. On the other hand, $\widetilde{A}/H$ is the fiber over the identity element under the natural map $\widetilde{A'}/H \rightarrow \widetilde{B}$, so that if the latter is perfectoid so is the former by \cref{lem.product of perfedctoids in proet}. Thus, by replacing $A$ with $(A \times A^\vee)^4$, we may assume $A$ admits a principal polarization $\lambda$ (by Zarhin's trick). Next, by replacing $A$ with $A \times A$ and taking the principal polarization $\begin{bmatrix} 0 & \lambda \\ \lambda & 0 \end{bmatrix}$, we may assume that $H_p$ is isotropic. Finally, we are always free to enlarge $H$, since if $H \leq H'$, then $\widetilde{A}/H$ is a profinite \'{e}tale cover of $\widetilde{A}/H'$, so if the latter is perfectoid then so is the former by almost purity as in \cref{lemma.pro-etale-cover-perf-is-perf}. Thus we may assume that $H_\ell=T_\ell A$ for all $\ell \neq p$ and, using \cref{lemma.maximal-isotropic}, that $H_p$ is maximal isotropic. 

Now, the cover $\widetilde{A}/H$ can be written as the limit of covers as follows: for $H_n\coloneqq H/p^n H \subseteq A[p^n]$, there is a natural map $A/H_n \rightarrow A$ given by quotient by $A[p^n]/H_n$ so that the composition with the quotient map $A \rightarrow A/H_n$ is multiplication by $p^n$ on $A$. This map factors as the iterated composition of the maps $f_n: A_{n+1} \rightarrow A_{n}$ obtained by quotienting by 
\begin{equation}\label{eq.quotient-subgroup} A[p]/H_1 = (A[p^{n+1}]/H_{n+1})[p] \subseteq (A/H_{n+1})[p]. \end{equation}
We have 
\[ \widetilde{A}/H = \left(A \xleftarrow{f_0} A/H_1 \xleftarrow{f_1} A/H_2 \ldots \right).\]
We claim that, for $n \gg 0$, the transition map $f_n$ is a lift of Frobenius mod $p^{1/2}$, and it will follow that $\widetilde{A}/H$ is perfectoid. 

To show this, we will compare with some computations of canonical subgroups in \cite{ScholzeOntorsioninthecohomologyoflocallysymmetricvarieties}. Thus, let $g=\dim_C A$. We may fix a basis $e_1, \ldots, e_g$ of $H_p$ and extend it to a symplectic basis $e_1,\ldots, e_{2g}$ of $T_p A$. Since, by our assumption, the span of $e_1, \ldots, e_g$ in $T_p A \otimes C$ is complementary to $\Lie A(1)$,  there is a unique basis $v_1, \ldots, v_g$ for $\Lie A(1)$ whose vectors in the basis $e_1, \ldots, e_{2g}$ are the columns of a matrix $\begin{bmatrix} M \\ \mathrm{Id}_{g\times g} \end{bmatrix}$ where $M$ is a $g\times g$ matrix with $C$-coefficients, i.e.
\[ v_j = \sum_{i=1}^{g} m_{ij} e_i + e_{g+j}. \]

The isogeny $A/H_n \rightarrow A$ identifies 
\[ T_p (A/H_n) = p^n T_p A + H = \langle e_1, \ldots, e_g, p^n e_{g+1}, \ldots, p^n e_{2g} \rangle.\]
In the basis $e_1, \ldots, e_g, p^ne_{g+1}, \ldots, p^n e_{2g}$, the basis elements $v_i$ for $\Lie A(1)$ have coordinates given by the columns of 
$\begin{bmatrix} M \\ \frac{1}{p^n}\mathrm{Id}_{g\times g} \end{bmatrix}.$ It follows that the associated point in the chart of the flag variety of isotropic subspaces of $C^{2g}$ is given by $\begin{bmatrix} p^nM \\ \mathrm{Id}_{g\times g} \end{bmatrix}$. Thus, as $n \rightarrow \infty$, these approach the point $\begin{bmatrix} 0 \\ \mathrm{Id}_{g\times g} \end{bmatrix}.$ 

It thus follows from the results of \cite[3.2-3.3]{ScholzeOntorsioninthecohomologyoflocallysymmetricvarieties}
that for $n$ sufficiently large, the subgroup $H$ of $T_p (A/H_n)$ is anti-canonical: indeed, the anticanonical locus in the infinite level Siegel modular variety $\mathcal{X}_a\coloneqq \cup_{0 < \epsilon < 1/2} \mathcal{X}(\epsilon)_a$ is open and partially proper, and it contains the closed set $\pi_{\mathrm{HT}}^{-1}(\begin{bmatrix} 0 \\ \mathrm{Id}_{g\times g} \end{bmatrix})$ by Lemma 3.3.20 of loc cit. In particular, the subgroup $(A/H_n) [p] / \left(H_{n+1}/H_n\right) \subseteq (A/H_{n+1})[p]$ is canonical. Since this agrees with the subgroup $A[p]/H_1$ of \cref{eq.quotient-subgroup}, $f_{n+1}$ is a lift of Frobenius. 
\end{proof}

In the proof we used the following simple linear algebra lemma. 

\begin{lemma}\label{lemma.maximal-isotropic}
   Let $A/C$ be a polarized abelian variety and let $H_p \leq T_p A$ be an isotropic subgroup such that $H_p \otimes C \cap \Lie A(1) = \{0\}$. Then there is a maximal isotropic subgroup $H_p'$ containing $H_p$ such that $H_p' \otimes C \cap \Lie A(1) = \{0\}$.
\end{lemma}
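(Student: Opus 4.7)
\emph{Proof proposal.} The plan is to reduce to a piece of symplectic linear algebra over $\mathbb{Q}_p$ with a distinguished Lagrangian defined over $C$. Let $V := T_pA \otimes_{\mathbb{Z}_p} \mathbb{Q}_p$, endowed with the non-degenerate alternating form $\omega$ induced by the Weil pairing attached to the polarization, and write $V_C := V \otimes_{\mathbb{Q}_p} C$ and $L := \Lie A(1) \subseteq V_C$. The one input from $p$-adic Hodge theory I would invoke is that $L$ is a Lagrangian subspace of $(V_C, \omega)$: the polarization identifies the dual Hodge-Tate filtrations on $A$ and on $A^\vee$, and these are annihilators of one another under the Weil pairing, so $L = L^\perp$. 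Replacing $H_p$ by its saturation (which shares the same $\mathbb{Q}_p$-span and so satisfies the same hypothesis, and any solution for the saturation also works for $H_p$), I may assume $H_p = W \cap T_pA$ for $W := H_p \otimes_{\mathbb{Z}_p} \mathbb{Q}_p$; the hypothesis says $W$ is $\omega$-isotropic and $W_C \cap L = 0$. It then suffices to produce a $\mathbb{Q}_p$-Lagrangian $W' \subseteq V$ containing $W$ with $W'_C \cap L = 0$, since $H_p' := W' \cap T_pA$ is then automatically a saturated sublattice of rank $g$, hence a maximal isotropic subgroup of $T_pA$ containing $H_p$ with the required property.

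To produce $W'$, I would inductively build a chain of isotropic subspaces of $V$
\[ W = W_0 \subsetneq W_1 \subsetneq \cdots \subsetneq W_{g-k} = W', \qquad \dim W_i = k+i, \quad W_{i,C} \cap L = 0, \]
where $k := \dim W$. Given $W_{i-1}$, I need to choose $u \in W_{i-1}^\perp$ so that $W_i := W_{i-1} + \mathbb{Q}_p u$ is isotropic (automatic as $u \in W_{i-1}^\perp$ and $\omega(u,u)=0$), and so that $u \notin (L + W_{i-1,C})$ inside $V_C$ — this last condition implies $W_{i,C} \cap L = 0$, since any relation $w + cu = \ell$ with $w \in W_{i-1,C}$, $c \in C$, $\ell \in L$ forces either $c=0$ (whence $w = \ell \in L \cap W_{i-1,C} = 0$) or $u \in L + W_{i-1,C}$. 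The modular law combined with $W_{i-1,C} \subseteq W_{i-1,C}^\perp$ gives
\[ (L + W_{i-1,C}) \cap W_{i-1,C}^\perp = (L \cap W_{i-1,C}^\perp) + W_{i-1,C}, \]
and the standard dimension formula for Lagrangians — namely $\dim(L \cap W_{i-1,C}^\perp) = g - \dim W_{i-1}$ whenever $L \cap W_{i-1,C} = 0$, obtained from the surjection $L \twoheadrightarrow W_{i-1,C}^*$ induced by $\omega$ — shows this obstruction subspace has $C$-dimension exactly $g$. Since $\dim_C W_{i-1,C}^\perp = 2g - k - i + 1 > g$ for $i \leq g - k$, the obstruction is a proper $C$-subspace of $W_{i-1,C}^\perp$, hence its intersection with the $\mathbb{Q}_p$-subspace $W_{i-1}^\perp$ is a proper $\mathbb{Q}_p$-subspace of $W_{i-1}^\perp$, and I can choose the desired $u$ there.

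The main (mild) obstacle is verifying that $L = \Lie A(1)$ really is Lagrangian rather than merely isotropic; once that standard duality fact is in hand, the construction is pure dimension counting in symplectic linear algebra, using crucially that the distinguished subspace $L$ sits over the larger coefficient field $C$ while the subspaces we are producing are required to live over $\mathbb{Q}_p$ — the point being that no single proper $C$-subspace of $V_C$ can contain a $\mathbb{Q}_p$-basis of $V$.
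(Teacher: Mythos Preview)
Your proposal is correct and follows essentially the same route as the paper's proof: both reduce to finding a $\mathbb{Q}_p$-Lagrangian $W'$ in $V=T_pA\otimes\mathbb{Q}_p$ containing $W$ with $W'_C\cap L=0$, both proceed by inductively enlarging the isotropic subspace one dimension at a time, both use that $L=\Lie A(1)$ is Lagrangian (not merely isotropic), and both conclude via a dimension count that the obstruction inside $W_{i-1,C}^\perp$ is a proper $C$-subspace, so the $\mathbb{Q}_p$-form $W_{i-1}^\perp$ cannot be contained in it. The only cosmetic difference is in packaging the dimension count: you compute the obstruction $(L+W_{i-1,C})\cap W_{i-1,C}^\perp$ directly via the modular law, whereas the paper instead shows $W_{i-1,C}^\perp \to V_C/L$ is surjective and then argues the image of $W_{i-1,C}$ is too small; these are the same computation viewed from opposite ends.
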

\begin{proof}
Let $V=H_p \otimes \mathbb{Q}_p$. It suffices to find a maximal isotropic $V' \supset V$ such that $V'_C \cap \Lie A(1)=\{0\}$, since we may then take $H_p'=V \cap T_p A$. By induction, it suffices to show that if 
\[ \dim_{\mathbb{Q}_p} V =: d < g := \dim A,\]
then there exists an isotropic subspace $W \supsetneq V$ such that $W_C \cap \Lie A(1) = \{0\}$. 

To find such a $W$, we recall that $\Lie A(1)$ is maximal isotropic in $V_C$. We then claim the map $V^\perp_C \rightarrow T_p A \otimes C / \Lie A(1)$ is surjective. We can see this by counting dimensions: 
\[ V^\perp_C \cap \Lie A(1) = V^\perp_C \cap \Lie A(1)^\perp = (V_C \oplus \Lie A(1))^\perp,\]
thus by our assumptions has dimension $2g - (d +g)= (g-d)$. On the other hand, $V^\perp_C$ has dimension $2g-d$, so its image in $T_p A \otimes C / \Lie A(1)$ has dimension $(2g-d) -(g-d)=g$, and thus the map is surjective. Since $d<g$ by assumption, it follows that we may choose a $w \in V^\perp$ such that the image of $w$ in $T_p A \otimes C / \Lie A(1)$ is not in the span of $V_C$. Thus $W=V \oplus \langle w \rangle$ is a larger isotropic subspace such that $W_C \cap \Lie A(1)=\{0\}$, as desired. 
\end{proof}

\begin{remark}\label{remark.abeloid-canonical-subgroup-expected-statement}
Suppose $A/C$ is an abeloid variety of good reduction and dimension $g$, and $H \leq T_p A$ is a subgroup such that $T_p A \otimes C = H \otimes C \oplus \Lie A(1).$ Then for $H_n\coloneqq H/p^n H \subseteq A[p^n]$ and $A_n=A/H_n$, we expect that there exists a pseudouniformizer $\pi$ in $\mathcal{O}_C$ and $N \gg 0$ such that for all $n \geq N$, the natural map $A_{n+1} \rightarrow A_n$ is a lift of Frobenius modulo $\pi$. In the proof of \cref{prop.perfectoid-covers-abelian}, we used the results of \cite{ScholzeOntorsioninthecohomologyoflocallysymmetricvarieties} to establish this for maximal isotropic $H$ when $A$ is a principally polarized abelian variety, then reduced to this case. A direct proof of this expected statement would thus simplify the proof and extend the result to all semi-abeloid varieties. 
\end{remark}

\section{Varieties with globally generated 1-forms}\label{s.varieties-with-globally-generated-1-forms}

By considering the Albanese morphism, we also obtain a perfectoidness result for certain abelian covers of varieties with globally generated 1-forms. We start with the following well-known fact.

\begin{lemma}[{\cite[Lemma 5]{nicholas2000finiteness}}]
\label{lem.pi_1^ab in terms of Alb}
    Let $X/C$ be a smooth proper connected algebraic variety. Fix a point $x \in X(C)$ and its associated Albanese morphism $\iota: X\to \mathrm{Alb}(X)$. Then there is a canonical short exact sequence
    $$0\to \underline{\mathrm{NS}}^\tau(X)^\vee(C)\to \pi_1(X,x)^\mathrm{ab}\to T_{\widehat{\mathbb{Z}}}\mathrm{Alb}(X)\to 0$$
    where $\underline{\mathrm{NS}}^\tau(X)^\vee$ is the Cartier dual of the torsion Néron-Severi group scheme of $X$. In particular, if $\pi_1(X,x)^\mathrm{ab}$ is torsion-free, then $\pi_1(X,x)^\mathrm{ab}\simeq T_{\widehat{\mathbb{Z}}}\mathrm{Alb}(X)$.
\end{lemma}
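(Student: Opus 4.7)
The plan is to compute each finite quotient $\pi_1(X,x)^\mathrm{ab}/n$ via Kummer theory and Cartier duality, identify the induced map to $\mathrm{Alb}(X)[n]$ as the reduction modulo $n$ of the Albanese-induced map, and then pass to the inverse limit.

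First, since $C$ is algebraically closed, $C^\times$ is divisible, so Kummer theory yields $H^1(X, \mu_n) \cong \mathrm{Pic}(X)[n]$. The tautological identification $H^1(X, \mu_n) = \mathrm{Hom}(\pi_1(X,x)^\mathrm{ab}, \mu_n)$ then realizes $\pi_1(X,x)^\mathrm{ab}/n$ as the Cartier dual of the finite group $\mathrm{Pic}(X)[n]$. Applying the snake lemma to multiplication by $n$ on $0 \to \mathrm{Pic}^0(X) \to \mathrm{Pic}(X) \to \mathrm{NS}(X) \to 0$, and using divisibility of $\mathrm{Pic}^0(X)(C)$, produces a short exact sequence
\[ 0 \to \mathrm{Pic}^0(X)[n] \to \mathrm{Pic}(X)[n] \to \mathrm{NS}^\tau(X)[n] \to 0. \]

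Next, I compare with $\mathrm{Alb}(X)$. Since the N\'{e}ron--Severi group of an abelian variety is torsion-free, the analogous sequence for $\mathrm{Alb}(X)$ degenerates to $\mathrm{Pic}(\mathrm{Alb}(X))[n] = \mathrm{Pic}^0(\mathrm{Alb}(X))[n]$, and Albanese--Picard duality provides a canonical isomorphism $\iota^*: \mathrm{Pic}^0(\mathrm{Alb}(X)) \xrightarrow{\simeq} \mathrm{Pic}^0(X)$. Hence the pullback $\iota^*: H^1(\mathrm{Alb}(X), \mu_n) \to H^1(X, \mu_n)$ identifies the left-hand side with $\mathrm{Pic}^0(X)[n] \subseteq \mathrm{Pic}(X)[n]$, with cokernel $\mathrm{NS}^\tau(X)[n]$. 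Cartier-dualizing the displayed sequence, and using the Weil pairing identification $\mathrm{Pic}^0(X)[n]^\vee = \mathrm{Alb}(X)^{\vee\vee}[n] = \mathrm{Alb}(X)[n]$ (where $\mathrm{Pic}^0(X) = \mathrm{Alb}(X)^\vee$), produces
\[ 0 \to \mathrm{NS}^\tau(X)[n]^\vee \to \pi_1(X,x)^\mathrm{ab}/n \to \mathrm{Alb}(X)[n] \to 0, \]
in which naturality of Cartier duality applied to $\iota$ forces the right-hand map to be the reduction modulo $n$ of $\iota_*$.

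Finally, pass to the inverse limit along divisibility. The systems $(\pi_1(X,x)^\mathrm{ab}/n)_n$ and $(\mathrm{Alb}(X)[n])_n$ have surjective transition maps (reduction modulo $m$ and multiplication by $n/m$ respectively), with respective limits $\pi_1(X,x)^\mathrm{ab}$ and $T_{\widehat{\mathbb{Z}}}\mathrm{Alb}(X)$, while $(\mathrm{NS}^\tau(X)[n]^\vee)_n$ is eventually constant equal to $\underline{\mathrm{NS}}^\tau(X)^\vee(C)$ once $n$ exceeds the exponent of the finite group $\mathrm{NS}^\tau(X)$. In particular $\varprojlim^1$ vanishes in all three systems, so the short exact sequence passes to the limit, giving the asserted sequence. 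The main bookkeeping obstacle is verifying the compatibility of the Cartier duality and Weil pairing identifications with the transition maps in the inverse system, and confirming that the limit of the right-hand maps really is $\iota_*$; this is purely a matter of naturality but must be carried out carefully to identify the middle term and the canonical nature of the surjection to $T_{\widehat{\mathbb{Z}}}\mathrm{Alb}(X)$.
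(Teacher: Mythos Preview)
The paper does not supply its own proof of this lemma; it is stated with a citation to \cite[Lemma 5]{nicholas2000finiteness} and used as a black box. Your argument is the standard one and is correct: Kummer theory identifies $\pi_1(X,x)^{\mathrm{ab}}/n$ with the Cartier dual of $\mathrm{Pic}(X)[n]$, the snake lemma applied to multiplication by $n$ on the Picard--N\'eron--Severi sequence (using divisibility of $\mathrm{Pic}^0(X)(C)$ in characteristic zero) produces the exact sequence at each finite level, Albanese--Picard duality plus the Weil pairing identify the quotient with $\mathrm{Alb}(X)[n]$, and Mittag--Leffler lets you pass to the limit. One small point worth making explicit is that $\pi_1(X,x)^{\mathrm{ab}}/n$ is finite (so Pontryagin duality applies cleanly): this follows because $H^1_{\et}(X,\mu_n)$ is finite for $X$ proper. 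Otherwise the bookkeeping you flag (naturality under transition maps, compatibility of the Weil pairing with the Cartier duality) is routine, and your sketch is essentially what one would find in the cited reference.
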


\begin{lemma}
\label{lem.variety with gg diff unramified}
    Let $X/C$ be a smooth proper connected algebraic variety. Fix a point $x \in X(C)$. Then the Albanese morphism $\iota: X\to \mathrm{Alb}(X)$ is unramified if and only if $\Omega^1_{X/C}$ is generated by global sections.
\end{lemma}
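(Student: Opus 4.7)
The plan is to use the standard conormal sequence for the Albanese morphism $\iota \colon X \to A := \mathrm{Alb}(X)$, which gives
\[ \iota^* \Omega^1_{A/C} \longrightarrow \Omega^1_{X/C} \longrightarrow \Omega^1_{X/A} \longrightarrow 0. \]
By definition, $\iota$ is unramified if and only if $\Omega^1_{X/A} = 0$, i.e.\ if and only if the map $\iota^* \Omega^1_{A/C} \to \Omega^1_{X/C}$ is surjective. The proof will consist of identifying this surjectivity with global generation of $\Omega^1_{X/C}$, using two well-known facts about the Albanese: that $\Omega^1_{A/C}$ is a trivial vector bundle (since $A$ is an abelian variety), and that pullback along $\iota$ induces an isomorphism $\iota^* \colon H^0(A, \Omega^1_{A/C}) \xrightarrow{\sim} H^0(X, \Omega^1_{X/C})$ on global $1$-forms (which is part of the universal property of the Albanese morphism).

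First I would recall the triviality of $\Omega^1_{A/C}$: the invariant $1$-forms on the abelian variety $A$ form a basis of $H^0(A, \Omega^1_{A/C})$ and trivialize the sheaf, so the evaluation map $H^0(A, \Omega^1_{A/C}) \otimes_C \mathcal{O}_A \xrightarrow{\sim} \Omega^1_{A/C}$ is an isomorphism. Pulling back along $\iota$ gives an isomorphism
\[ H^0(A, \Omega^1_{A/C}) \otimes_C \mathcal{O}_X \xrightarrow{\sim} \iota^* \Omega^1_{A/C}. \]
Composing with $\iota^* \Omega^1_{A/C} \to \Omega^1_{X/C}$ and using that $\iota^* \colon H^0(A, \Omega^1_{A/C}) \to H^0(X, \Omega^1_{X/C})$ is an isomorphism, we see that the image of $\iota^* \Omega^1_{A/C} \to \Omega^1_{X/C}$ coincides with the image of the evaluation map $H^0(X, \Omega^1_{X/C}) \otimes_C \mathcal{O}_X \to \Omega^1_{X/C}$.

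Combining these observations, $\iota$ is unramified if and only if $\iota^* \Omega^1_{A/C} \to \Omega^1_{X/C}$ is surjective, if and only if the evaluation map $H^0(X, \Omega^1_{X/C}) \otimes_C \mathcal{O}_X \to \Omega^1_{X/C}$ is surjective, which is precisely the statement that $\Omega^1_{X/C}$ is globally generated. The main (and essentially only) subtle point is invoking the universal property of the Albanese to identify $H^0(A, \Omega^1_{A/C})$ with $H^0(X, \Omega^1_{X/C})$; everything else is a formal chase through the conormal sequence. I expect no serious technical obstacle, and the result likely follows in a few lines once these pieces are assembled.
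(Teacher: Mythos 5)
Your argument is correct and is essentially the paper's argument in different packaging: the paper works pointwise with $d\iota_x$ and its dual rather than with the global conormal sequence, but both proofs reduce the statement to the fact that $\iota^*\colon H^0(\mathrm{Alb}(X),\Omega^1_{\mathrm{Alb}(X)/C})\to H^0(X,\Omega^1_{X/C})$ is an isomorphism. The one step you should not wave away is precisely that isomorphism --- it is not literally ``part of the universal property'' of the Albanese, and it is where the paper spends its effort: injectivity is Serre's theorem (Th\'eor\`eme 4 of \emph{Quelques propri\'et\'es des vari\'et\'es ab\'eliennes en caract\'eristique $p$} is the reference used, i.e.\ \cite{serre1958quelques}), and surjectivity then follows by comparing dimensions, both sides having dimension $\dim_C H^1(X,\mathcal{O}_X)$ (the source because $\dim\mathrm{Alb}(X)=\dim\mathrm{Pic}^0(X)$, the target by Hodge theory; note this uses characteristic zero). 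With that input supplied, your conormal-sequence chase is complete and correct.
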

\begin{proof}
    Since $d\iota_y$ differs from $d\iota_x$ by translation, $\iota$ being unramified is equivalent to $T_{X/C,x}\to \mathrm{Lie}\mathrm{Alb}(X)$ being injective by \cite[0B2G]{stacks-project}. It is then equivalent to the dual
    $$d\iota_x^\vee:\mathrm{Lie}\mathrm{Alb}(X)^\vee \simeq \Gamma(\mathrm{Alb}(X),\Omega^1_{\mathrm{Alb}(X)/C})\xrightarrow{\iota^*}\Gamma(X,\Omega^1_{X/C}) \to  T_{X/C,x}^\vee$$
    being surjective. The first isomorphism is a general fact for group varieties, c.f. \cite[III, 5.2]{shafarevich1994basic}. On one hand, $\iota^*$ is injective by \cite[Théorème 4]{serre1958quelques}. On the other hand, the source of $\iota^*$ has dimension $\dim\Pic^0(X)$ which is $\dim_CH^1(X,\mathcal{O}_X)$; and the target of $\iota^*$ also has dimension $\dim_CH^1(X,\mathcal{O}_X)$ by Hodge theory. Hence $\iota^*$ is an isomorphism. Thus $d\iota_x^\vee$ being surjective is equivalent to $\Omega_{X/C}^1$ being generated by global sections.

\end{proof}

\begin{corollary}
\label{corollary.variety with gg diff}
    Let $X/C$ be a smooth proper connected variety with globally generated 1-forms. Fix a point $x \in X(C)$ and denote the corresponding morphism as $\phi:\pi_1(X,x)^\mathrm{ab}\to T_{\widehat{\mathbb{Z}}}\mathrm{Alb}(X)$. Then, for $H$ a closed subgroup of $\pi_{1}(X,x)^\mathrm{ab}$, $X_H^\diamond$ is perfectoid if $\phi(H)_p \otimes C(-1) \cap \Lie \mathrm{Alb}(X)=0$.
\end{corollary}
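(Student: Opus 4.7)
The plan is to reduce, via the Albanese morphism, to the already-established abelian variety case. Set $A := \mathrm{Alb}(X)$ and $H' := \phi(H) \leq T_{\widehat{\mathbb{Z}}}A$. By \cref{lem.variety with gg diff unramified}, the assumption that $\Omega^1_{X/C}$ is globally generated makes the Albanese morphism $\iota \colon X \to A$ unramified; by \cref{lem.pi_1^ab in terms of Alb}, the kernel $N := \ker\phi$ is finite.

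First I would reduce to showing that $X_{H+N}^\diamond$ is perfectoid. Indeed, since $N$ is finite, $\phi^{-1}(H') = H + N$ contains $H$ with finite index, so $X_H \to X_{H+N}$ is a finite \'{e}tale cover, and perfectoidness passes to finite \'{e}tale covers by almost purity (compare \cref{lemma.pro-etale-cover-perf-is-perf}). By construction, $X_{H+N}$ is identified with the pullback $X \times_A A_{H'}$ in $X_\proet$, since both correspond to the same subgroup of $\pi_1(X,x)$ via the factorization of $\pi_1(X,x) \to T_{\widehat{\mathbb{Z}}}A/H'$ through $\phi$. The hypothesis $\phi(H)_p \otimes C(-1) \cap \Lie A = 0$ is exactly the injectivity condition of \cref{theorem.cover-classification} applied to the abelian variety $A$ (using that the dual Hodge-Tate map is injective for abelian varieties), so $A_{H'}^\diamond$ is a perfectoid space.

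The remaining task is to transport perfectoidness across $\iota$. Since $\iota$ is unramified and both $X$ and $A$ are smooth, the rigid analytic inverse function theorem should yield, analytically locally on $X$, a factorization of $\iota|_U$ as a Zariski closed immersion $U \hookrightarrow V$ into an analytic open $V \subseteq A$. Then $V \times_A A_{H'}$ is a perfectoid open of $A_{H'}^\diamond$, and $U \times_A A_{H'} = U \times_V (V \times_A A_{H'})$ is perfectoid by \cref{lem.zariski closed of perfectoid in proet}. Varying over an analytic open cover of $X$ patches these into a perfectoid structure on all of $X_{H+N}^\diamond$.

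The main technically delicate point will be producing and patching this analytic-local closed immersion factorization of $\iota$ in the rigid analytic category; once this is in hand, the proof is a mechanical combination of the three pro-\'{e}tale perfectoid transfer lemmas of \cref{s.preliminaries} (étale pullback, Zariski-closed pullback, and finite étale cover) with \cref{theorem.cover-classification}.
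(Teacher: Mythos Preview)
Your approach is correct and genuinely different from the paper's. After the same initial reduction (replacing $H$ by $H + N = \phi^{-1}(\phi(H))$ and identifying $X_{H+N}$ with $\iota^* A_{H'}$), the paper argues globally: since $\iota$ is unramified and $X$ is proper, Zariski's main theorem forces $\iota$ to be a finite morphism, and then one invokes the perfectoidization of a finite algebra over a perfectoid base (via the Bhatt--Scholze results on $(S^+)_{\perfd}$) to conclude that the pullback diamond is representable by a perfectoid space. Your route is instead local: the rigid analytic implicit function theorem does yield, for each point of $X$, affinoid opens $U$ and $V$ with $\iota|_U : U \hookrightarrow V$ a Zariski-closed immersion (choose \'{e}tale coordinates on $V$ so that a subset pulls back to \'{e}tale coordinates on $U$; the remaining coordinates then exhibit $\iota(U)$ as a graph), after which \cref{lem.zariski closed of perfectoid in proet} transfers perfectoidness from $A_{H'}|_V$ to its pullback along $U \hookrightarrow V$.

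Note that your argument actually delivers more than the stated corollary: \cref{lem.zariski closed of perfectoid in proet} produces perfectoidness of $X_{H+N}|_U$ as an object of $U_\proet$, and gluing over the cover plus the finite \'{e}tale passage to $X_H$ preserves this, so you obtain that $X_H \in X_\proet$ is perfectoid, not merely that $X_H^\diamond$ is representable. The paper explicitly flags in \cref{remark.unramified-alb-better-version} that its perfectoidization argument only yields the diamond statement in general, reserving the stronger pro-\'{e}tale conclusion for special cases (curves, or when the Albanese image is smooth so that $\iota$ globally factors as finite \'{e}tale followed by a closed immersion). Your local-immersion approach removes that restriction, at the cost of carrying out the implicit-function-theorem factorization carefully in the rigid analytic category.
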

\begin{proof}
    By \cref{theorem.cover-classification}, the associated cover $\mathrm{Alb}(X)_{\phi(H)}^\diamond\to \mathrm{Alb}(X)^\diamond$ is perfectoid. By \cref{lem.pi_1^ab in terms of Alb}, $X_H$ is a finite \'etale cover of $X_{\phi^{-1}(\phi(H))}$. Hence it suffices to show that $X_{\phi^{-1}(\phi(H))}^\diamond\simeq \iota^*\mathrm{Alb}(X)_H^\diamond$ is perfectoid. By \cref{lem.variety with gg diff unramified}, the Albanese morphism is unramified. Since an unramified morphism is quasi-finite and $X$ is proper, by Zariski's main theorem we find that $X\to \mathrm{Alb}(X)$ is a finite morphism. The statement is local, so it then reduces to show that if $f^\sharp:(R,R^+)\to (S,S^+)$ is a finite morphism of analytic Huber pair with $(R,R^+)$ being perfectoid, then $\mathrm{Spd}(S,S^+)$ can be represented by a perfectoid space. Since $f^\sharp$ is finite, $R^+ \to S^+$ is integral by definition. Hence $(S^+)_\perfd$ is static by \cite[Theorem 10.11]{BhattScholzepPrismaticCohomology}. Thus $\mathrm{Spd}(S,S^+)$ is represented by the perfectoid space $\Spa((S^+)_\perfd[1/p],(S^+)_\perfd)$ as desired.
\end{proof}
\begin{remark}\label{remark.unramified-alb-better-version} 
   Note that, because of our use of the perfectoidization of a finite morphism in the proof \cref{corollary.variety with gg diff}, in general we only obtain that $X_H^\diamond$ is perfectoid, not that $X_H$ is perfectoid. However, in certain special cases, one can show that in fact $X_H$ is perfectoid. For example, when $X$ is a smooth proper curve of genus greater or equal to 1, the Albanese map is a Zariski closed immersion and by using \cref{lem.zariski closed of perfectoid in proet}, one obtains that $X_H$ is perfectoid.  Generalizing the case of curves, if the image of Albanese is smooth and $X$ has globally generated 1-forms, then by miracle flatness, the Albanese morphism is the composition of a finite \'etale morphism and a closed immersion, and thus one can upgrade to $X_H$ being perfectoid by the same argument. 
\end{remark}

\begin{remark}\label{remark.distinguishing-perfectoid-conditions}
Using a similar argument to the proof \cref{corollary.variety with gg diff}, one can produce profinite \'{e}tale covers $X_\infty$ of smooth rigid analytic varieties $X$ such that $X_\infty^\diamond$ is perfectoid but $X_\infty$ is not. For example, for $0 < \epsilon \leq 1$, let $\mathbb{D}_{\epsilon}$ be the closed disk of radius $\epsilon$ around $0$, and let $g: \mathbb{D} \rightarrow \mathbb{D}_{\epsilon/2}$ be defined by $g(t)=t^2$. Writing $\mathbb{D}_{\epsilon/2,\infty}$ for the usual tower obtained by extracting $p^n$th power roots of $(1+t)$, we have $\mathbb{D}_{\epsilon/2,\infty}$ is perfectoid. Since $g$ is finite, it follows that $g^* \mathbb{D}_{\epsilon/2,\infty}^\diamond$ is perfectoid. However, we claim $\mathbb{D}\times_{\mathbb{D}_{\epsilon/2}}\mathbb{D}_{\epsilon/2,\infty}$ is not perfectoid. Note that this is predicted by \cite[Conjecture 3.3.5]{JuangeometricSen}: since $dg|_0=0$, the functoriality of the geometric Sen morphism implies that the geometric Sen morphism for $\mathbb{D}\times_{\mathbb{D}_{\epsilon/2}}\mathbb{D}_{\epsilon/2,\infty}$ vanishes on the fiber at $0 \in \mathbb{D}_\epsilon$. To verify that it is indeed not perfectoid, we claim that the derivative with respect to $t$ at $0$ defines a non-zero continuous derivation on 
on the completed direct limit of functions on the tower
$\mathbb{D}\times_{\mathbb{D}_{\epsilon/2}}\mathbb{D}_{\epsilon/2,\infty}$. This contradicts the perfectoidness of $\mathbb{D}\times_{\mathbb{D}_{\epsilon/2}}\mathbb{D}_{\epsilon/2,\infty}$ since it applies for any $\epsilon$ and after restricting first to any finite level, thus no open containing a pre-image of $0$ can be affinoid perfectoid. That one obtains a continuous derivation follows from the computation $\partial_t|_{t=0} (1+t^2)^{1/p^n}=0.$ This justification is closely related to our proof of the non-perfectoidness direction of \cref{theorem.cover-classification}, where we also exhibit non-trivial derivations. 
\end{remark}

Similarly, one obtains a computation of the geometric Sen morphism for abelian $p$-adic Lie torsors over such varieties by considering the morphism to Albanese as the composition of $d\rho$ with the constant inclusion $\Lie\mathrm{Alb}(X) \rightarrow T_p \mathrm{Alb}(X) \otimes C(-1)$ and the non-constant  $d\iota$ for $\iota: X \rightarrow \mathrm{Alb}(X)$ the Albanese morphism associated to the point $x$ (which is independent of $x$). The assignment sending $x \in X$ to the subspace spanned by the image of $d\iota_x$ is the canonical map for $X$. Thus, the natural conjecture is: 

\begin{conjecture}\label{conj.varieties}
   Let $X/C$ be a smooth proper connected varieties of pure dimension $d$ with globally generated 1-forms. Fix a point $x \in X(C)$. Then, for $H=\prod H_\ell$ a closed subgroup of $\pi_{1}(x,X)^\mathrm{ab}=T_{\widehat{\mathbb{Z}}} \mathrm{Alb}(X)$, $X_H|_U$ is perfectoid for $U \subseteq X$ the open subvariety obtained as the preimage under the canonical map $X \rightarrow \mathrm{Gr}_d(\Lie \mathrm{Alb}(X))$ of the open set of $d$-dimensional subspaces in $\Lie \mathrm{Alb}(X)$ that intersect $H_p \otimes C(-1)$ trivially in $T_p \mathrm{Alb}(X) \otimes C(-1)$. Moreover, no open perfectoid subdiamond of $X_H^\diamond$ has non-empty intersection with the restriction of $X_H^\diamond$ to the complementary closed subvariety in $X$.
\end{conjecture}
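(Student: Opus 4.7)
The plan is to prove both parts of \cref{conj.varieties} by reducing, locally on $X$, to statements about the $p$-divisible rigid analytic group $G = B^\circ$ of $B = \mathrm{Alb}(X)$, using functoriality of the dual Hodge-Tate map along the Albanese $\iota: X \to B$. Throughout, let $f: \Lie B \to T_p B \otimes C(-1)$ be the dual Hodge-Tate map and $V_0 = f^{-1}(H_p \otimes C(-1)) \subseteq \Lie B$; since $f$ is injective, the defining condition of $U$ at a point $x$ becomes $d\iota_x(T_x X) \cap V_0 = \{0\}$ in $\Lie B$. By functoriality applied to the computation in \cref{prop.geometric-sen-semi-abeloid}, the geometric Sen morphism for $X_H$ at $x$ is the composition of $d\iota_x$ with the Sen morphism for $B_H$; its kernel equals $(d\iota_x)^{-1}(V_0)$, vanishing iff $x \in U$.

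For the perfectoid direction, fix $x \in U$. By almost purity (\cref{lemma.pro-etale-cover-perf-is-perf}) applied to $X_H \to X_{H'}$ with $H' := H_p \cdot \prod_{\ell \neq p} T_\ell B$, it suffices to prove that $X_{H'}$ is perfectoid in $X_\proet$ on a neighborhood of $x$. After translation, assume $\iota(x) = e$, so $\iota$ locally factors through $G$. By \cref{theorem.fargues-rig-an-dual-HT}, the triple $(H_p, V_0, f|_{V_0})$ defines a sub-$p$-divisible rigid analytic group $K \subseteq G$; the quotient $G'' := G/K$ corresponds to $(T_p B/H_p, \Lie B/V_0, \bar f)$ with $\bar f$ injective (since $\bar f^{-1}(0) = V_0/V_0 = 0$), so $\widetilde{G''}$ is perfectoid by \cref{prop.cover-classification-p-divisible-group}. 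The natural torsor map $\widetilde{G}/H_p \to \widetilde{G''}$ over $\pi: G \to G''$ identifies $\widetilde{G}/H_p \simeq G \times_{G''} \widetilde{G''}$ as pro-\'etale objects of $G_\proet$, since both are $T_p B/H_p$-torsors classified by the same family of open subgroups of $T_p B$ containing $H_p$. Pulling back along the local factorization $X \to G \to G''$, we identify $X_{H'}$ locally at $x$ with $X \times_{G''} \widetilde{G''}$. The derivative of $X \to G''$ at $x$ is $T_x X \xrightarrow{d\iota_x} \Lie B \to \Lie B/V_0$, which is injective by $d\iota_x(T_x X) \cap V_0 = \{0\}$; so $X \to G''$ is unramified at $x$, and (using that $X$ is proper and $\iota$ is unramified, hence finite onto its image) \'etale-locally on the target a closed immersion near $x$. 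Combining \cref{lem.pro-etale-cover-perf-is-perf} with \cref{lem.zariski closed of perfectoid in proet}, $X_{H'}$ is perfectoid in $X_\proet$ on a neighborhood of $x$, as required.

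For the non-perfectoid direction, take $x \in X \setminus U$ with $0 \neq w \in T_x X$ such that $d\iota_x w \in V_0$, and suppose for contradiction that $V = \Spa(R, R^+)^\diamond \subseteq X_H^\diamond$ is a nonempty affinoid perfectoid open mapping to a neighborhood of $x$ and containing a point $\tilde x$ over $x$. Following the strategy of \cref{prop.cover-classification-p-divisible-group}(2), the key observation is that since $f(d\iota_x w) \in H_p \otimes C(-1)$, the cover $X_H \to X$ is \emph{unramified} in direction $w$: on each finite level cover $X_{H,n}$ (obtained, after the local identification with $X \times_{G''} \widetilde{G''}$, by extracting $p^n$-th roots in the $T_p B/H_p$-directions), the directional derivative $\partial_w$ has norm bounded independently of $n$ and therefore extends continuously to a nonzero $C$-linear derivation $R \to C$ with $C$ viewed as an $R$-module via $\tilde x^*$. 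This contradicts \cref{lem.no-derivation}, and is precisely the mechanism illustrated in \cref{remark.distinguishing-perfectoid-conditions}.

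The principal obstacle is the non-perfectoid direction: while \cref{prop.cover-classification-p-divisible-group}(2) constructs the required derivation via a geometric section from the diamond of a sub-$p$-divisible rigid group $G' \subseteq G$ tangent to the ``bad'' direction, in the present setting the image of $G' \to G$ generally does not lie inside the local image $\iota(X) \subseteq B$, so one cannot construct a geometric section $G'^{,\diamond}/H_p \to X_H^\diamond$ directly. Justifying the boundedness and continuity of $\partial_w$ on the completed algebra of $V$ requires a careful analysis using the explicit pullback presentation of $G$ from \cref{theorem.fargues-rig-an-dual-HT} and a formal study of the tower $X_H$ in a neighborhood of $x$. A secondary technical point, in the perfectoid direction, is verifying the pro-\'etale identification $\widetilde{G}/H_p \simeq G \times_{G''} \widetilde{G''}$ with enough naturality to feed into the combined \'etale and Zariski closed immersion base change arguments, although this should follow from the universal property of quotients in the category of $p$-divisible rigid analytic groups.
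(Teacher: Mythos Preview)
The statement you are attempting to prove is presented in the paper as an open \emph{conjecture}, not a theorem; the paper gives no proof, and the remark immediately following it explicitly flags that ``even the case of abelian covers of curves is in some ways more subtle'' than the abelian variety case settled by \cref{theorem.cover-classification}. There is therefore no paper proof to compare against.

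On the merits of your proposal: the perfectoid direction is a reasonable strategy, but the step where you claim $X \to G''$ is ``\'etale-locally on the target a closed immersion near $x$'' is not justified as written. You invoke properness of $X$ and finiteness of $\iota$, but the map $X \to G''$ is only defined on a small open neighborhood of $x$ (where $\iota$ factors through $G$) and is certainly not proper; what you actually need is a rigid-analytic implicit-function-theorem argument showing that an unramified map of smooth rigid spaces is, Zariski-locally on the source, a closed immersion into an \'etale neighborhood of the target. This is plausible but is not supplied by the references you cite, and the interaction with \cref{lem.zariski closed of perfectoid in proet} (which requires both source and target of the closed immersion to be smooth) needs to be made explicit.

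For the non-perfectoid direction you correctly identify the principal obstacle and do not resolve it. The section $s: G'^{,\diamond} \to G_H^\diamond$ from the proof of \cref{prop.cover-classification-p-divisible-group}(2) has no analog here because the one-parameter subgroup $G'$ tangent to the bad direction $d\iota_x(w)$ does not land in $\iota(X)$. Your proposed substitute---bounding the norm of $\partial_w$ on each finite level $X_{H,n}$ uniformly in $n$---is exactly the content of the conjecture in that direction, and you do not carry it out. Moreover, the local product description $X_{H'} \simeq X \times_{G''} \widetilde{G''}$ you set up in the perfectoid part is only available over the locus $U$ where $X \to G''$ is unramified, so it cannot be used at the points $x \notin U$ where you need the derivation argument. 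As written, the proposal is a plan with a genuine gap, consistent with the paper's decision to state this as a conjecture.
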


\begin{remark}It has been relayed to us through Ben Heuer and Peter Wear that Piotr Achinger has asked for a characterization of the perfectoid profinite \'{e}tale covers of curves and abelian varieties. Our main result \cref{theorem.cover-classification} gives a complete answer for abelian varieties, while the discussion above and a comparison of the expected \cref{conj.varieties} to the partial result given by \cref{corollary.variety with gg diff} shows that even the case of abelian covers of curves is in some ways more subtle. 
\end{remark}

\appendix
\section{Pro-finite Étale Fundamental Groups of Abeloid Varieties}\label{appedix.abeloid-fundamental-group}

In order to study $\mathbb{Z}_p$-local systems on abeloid varieties, we must understand their profinite \'etale fundamental groups.  In the algebraic setting, ~\cite[\textsection IV]{MumfordAbVar} shows that the profinite \'etale fundamental group $\pi_1(A,e)$ of an abelian variety $A$ over an algebraically closed field of characteristic zero is the adelic Tate module $T_{\widehat{\mathbb{Z}}}A$.  However, the arguments given rely on the existence of an ample line bundle, which is special to the algebraic setting.  In this appendix, we provide a proof of the analogous result for abeloid varieties over $C$ (our fixed algebraically closed non-archimedan extension of $\mathbb{Q}_p$). This result is \cref{prop.profinite-etale-fundamental-group}. 

\begin{remark} Abeloid varieties can, in general, have irreducible de Jong \'etale covers of infinite degree: for example, the Tate uniformization of an elliptic curve with multiplicative reduction. As illustrated by this example, such covers arise already for the analytifications of abelian varieties. These covers are not seen by the profinite \'{e}tale fundamental group. 
\end{remark}

Lang and Serre showed that a finite \'etale cover of an abelian variety over an algebraically closed field is again an abelian variety; we begin by providing the analogous result for abeloid varieties.
\begin{lemma}\label{lemma.abeloid-variety-finite-etale-cover-group-structure}
    Let $A$ be an abeloid variety over a non-archimedean extension $C$.  Then given a finite \'etale cover $\pi:X\rightarrow A$, where $X$ is irreducible, $X$ can be equipped with the structure of an abeloid variety so that $\pi$ is a morphism of rigid analytic groups.
\end{lemma}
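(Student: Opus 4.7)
The plan is to mimic the classical Serre--Lang strategy: fix a lift $\tilde e \in \pi^{-1}(e)$ of the identity and transport the multiplication from $A$ to $X$ by the covering space lifting criterion. Concretely, consider the morphism $f \colon X \times X \to A$ defined by the composition $f = m_A \circ (\pi \times \pi)$, that is $f(x,y) = \pi(x) + \pi(y)$. I would like to produce a lift $m_X \colon X \times X \to X$, meaning a morphism satisfying $\pi \circ m_X = f$ and $m_X(\tilde e, \tilde e) = \tilde e$.

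By the Galois correspondence for profinite \'etale covers, such a lift exists if and only if
\[
f_*\bigl(\pi_1(X \times X, (\tilde e,\tilde e))\bigr) \subseteq \pi_*\bigl(\pi_1(X, \tilde e)\bigr).
\]
To check this I would invoke the K\"unneth identification $\pi_1(X \times X, (\tilde e,\tilde e)) = \pi_1(X, \tilde e) \times \pi_1(X, \tilde e)$, which is available since $X$ is connected and proper (as it is finite over the proper $A$) over the algebraically closed $C$ of characteristic zero; together with the standard fact that $m_{A*}$ is the group operation on $\pi_1(A,e)$ (because $m_A$ restricts to the identity on each factor $A \times \{e\}$ and $\{e\} \times A$), this gives $f_*(a,b) = \pi_*(a) \cdot \pi_*(b)$, which lies in the subgroup $\pi_*(\pi_1(X,\tilde e))$.

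Once $m_X$ is in hand, I would verify the group axioms via uniqueness of lifts: any two lifts of a map to $A$ along $\pi$ which agree at a single point of a connected source are equal. Thus $x \mapsto m_X(\tilde e, x)$ and $\mathrm{id}_X$ are both lifts of $\pi$ agreeing at $\tilde e$, hence coincide, giving the left identity axiom; the right identity is analogous. For inverses, observe $(i_A \circ \pi)_*(\pi_1(X,\tilde e)) = -\pi_*(\pi_1(X,\tilde e)) = \pi_*(\pi_1(X,\tilde e))$, so $i_A \circ \pi$ lifts to $i_X \colon X \to X$ with $i_X(\tilde e) = \tilde e$, and the identity $m_X(x, i_X(x)) = \tilde e$ follows since both sides lift the constant map $e$. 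Associativity and commutativity are deduced similarly from the corresponding identities on $A$ (using K\"unneth on $X^{\times 3}$ for associativity). Finally, $X$ is smooth because $\pi$ is \'etale and $A$ is smooth, proper because $\pi$ is finite and $A$ is proper, and connected by hypothesis, so $X$ is an abeloid variety, and $\pi$ is a morphism of rigid analytic groups by construction.

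The main obstacle is the K\"unneth identification $\pi_1(X \times X) = \pi_1(X) \times \pi_1(X)$ for the profinite \'etale fundamental group in the rigid analytic setting. This is the analogue of Grothendieck's K\"unneth formula for proper geometrically connected varieties over an algebraically closed field of characteristic zero, and in the rigid setting it can be extracted from the Stein factorization of the projection $X \times X \to X$ together with the fact that finite \'etale covers pull back along closed points to trivial covers of $X$; I would make this argument explicit, or alternatively bypass it by showing directly that the pullback cover $f^\ast X \to X \times X$ admits a section, using that its restrictions to $\{\tilde e\} \times X$ and to $X \times \{\tilde e\}$ both split via the diagonal of $X \times_A X$, and then extending the compatible sections from the ``wedge'' $(\{\tilde e\} \times X) \cup (X \times \{\tilde e\})$ to all of $X \times X$ by a Galois-theoretic argument.
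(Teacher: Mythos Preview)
Your approach via the covering-space lifting criterion is genuinely different from the paper's. The paper follows Mumford's graph-theoretic argument (which \emph{is} the classical Serre--Lang argument): it takes the preimage $\Gamma_X \subset X^3$ of the graph of $m_A$ under $\pi^{\times 3}$, picks the irreducible component $\Gamma_X^0$ through $(x_0,x_0,x_0)$, and shows directly that the projection $\Gamma_X^0 \to X \times X$ is an isomorphism. The key step is that the composite $q \colon \Gamma_X^0 \to X \times X \to X$ is proper and smooth and admits a section $x \mapsto (x,x_0,x)$, from which one deduces (via the theorem on formal functions) that the fibres of $q$ are irreducible; this forces $\Gamma_X^0 \to X \times X$ to be finite \'etale of degree one. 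The group axioms are then handled by the rigidity lemma for proper rigid analytic varieties. No fundamental-group input is needed.

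Your route is cleaner in spirit but runs into exactly the obstacle you flag: the K\"unneth identification $\pi_1(X \times X) \cong \pi_1(X) \times \pi_1(X)$ for the profinite \'etale fundamental group of rigid analytic varieties is not available off the shelf, and establishing it here would require roughly the same geometric work the paper's argument already does (indeed, the cover $f^*X \to X \times X$ you want to split is literally $\Gamma_X \to X \times X$). Your proposed workaround of extending a section from the wedge $(\{\tilde e\} \times X) \cup (X \times \{\tilde e\})$ is not obviously viable: sections of finite \'etale covers do not in general extend off closed subvarieties, and the ``Galois-theoretic argument'' you allude to would need to be made precise. The paper's graph method sidesteps all of this.
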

\begin{proof}
    We sketch the argument given in ~\cite[\textsection 18]{MumfordAbVar}.  Let $\Gamma_A$ be the graph (inside $A\times A\times A$) of the multiplication map $m:A\times A\rightarrow A$, and let $\Gamma_X$ be its pre-image (in $X\times X\times X$) under $\pi\times\pi\times \pi$.  To see that an irreducible component of $\Gamma_X$ is the graph of a morphism, we want to know that the restriction of the projection map $p_{X,12}:\Gamma_X\rightarrow X\times X$ is an isomorphism.

    We have a commutative diagram
    \[
    \begin{tikzcd}
        \Gamma_X \ar[r]\ar[d] & \Gamma_A \ar[d]   \\
        X\times X\ar[r] & A\times A
    \end{tikzcd}
    \]
    where the horizontal arrows are \'etale and the right vertical arrow is an isomorphism, so the left vertical arrow is \'etale, as well.

    Choose a point $x_0\in X(C)$ such that $\pi(x_0)$ is the identity of $A$, and let $\Gamma_X^0$ be the component of $\Gamma_X$ containing $(x_0,x_0,x_0)$.  Then we have a morphism $\sigma:X\rightarrow \Gamma_X^0$ given by $\sigma(x)=(x,x_0,x)$; this is a section for the composite map
    \[  q:\Gamma_X^0\rightarrow X\times X\xrightarrow{p_1} X  \]
    Now $q$ is smooth (since it is the composite of an \'etale morphism and a projection map).

    Moreover, $X$ is proper (since it is finite over a proper variety), so $q$ is also proper (since it is the composition of the closed immersion $\Gamma_X^0\hookrightarrow X\times X\times X$ with the proper projection map $X\times X\times X\rightarrow X$).  We claim the existence of the section $\sigma$ implies that the fibers of $q$ are irreducible.  Then the preimage of $X\times\{x_0\}$ in $\Gamma_X^0$ is $\sigma(X)$, so the fibers of $\Gamma_X^0\rightarrow X\times X$ are irreducible, hence single points.

    Since $q:\Gamma_X^0\rightarrow X$ is proper, it factors as
    \[  \Gamma_X^0\rightarrow X'\rightarrow X   \]
    where $\mathscr{O}_{X'}=q_\ast \mathscr{O}_{\Gamma_X^0}$ and $X'\rightarrow X$ is finite.  It also admits a section, and the irreducible component of $X'$ which contains the image of $\Gamma_X^0$ is therefore isomorphic to $X$.  Then we use the theorem on formal functions (and the irreducibility of $X$) to see that the fibers of $q$ are irreducible.

    Now we know that $\Gamma_X^0$ is the graph of a morphism $m_X:X\times X\rightarrow X$.  It is straightforward to check that $m|_{X\times \{x_0\}}$ and $m|_{\{x_0\}\times X}$ are the identity morphisms.

    Finally, we need to check the existence of the inverse morphism.  This follows as in ~\cite[Appendix to \textsection 4]{MumfordAbVar}, with the minor detail that the use of the rigidity lemma must be replaced by a rigid analytic version~\cite[Proposition 7.1.2]{lutkebohmert2016}.
\end{proof}

\begin{remark}
    We do not know whether there is a version of this theorem if $\pi:X\rightarrow A$ is only assumed to be, e.g., a de Jong \'{e}tale covering space.  It would be nicer to know that, for example, the uniformization map $\mathbb{G}_m^{\mathrm{ad}}\rightarrow E$ of a Tate curve automatically equips $\mathbb{G}^{\mathrm{ad}}_m$ with the structure of a rigid analytic group variety.
\end{remark}

We will also need to know that the multiplication-by-$N$ map on an abeloid variety is an isogeny (that is, finite and surjective).  In the algebraic setting, Mumford proves this using the existence of an ample line bundle (which does not exist on a non-algebraic abeloid variety).  Instead, we exploit the \'etaleness of the multiplication map (note that any finite \'{e}tale map with irreducible target is, in particular, surjective).  However, this requires $N$ to be prime to the characteristic of the ground field, so we do use here that $C$ is assumed to be an extension of $\mathbb{Q}_p$.

\begin{lemma}\label{lemma.mult-by-N-isogeny}
    For $A/C$ an abeloid variety and $N \geq 1$, the multiplication by $N$ map $[N]_A$ is finite \'{e}tale.
\end{lemma}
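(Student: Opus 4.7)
The plan is to show that $[N]_A$ is étale using its behavior on tangent spaces, and then promote this to finite étale using properness of $A$.

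First, I would observe that $[N]_A$ is a group homomorphism, so for any point $a \in A(C)$ the differential $d[N]_A$ at $a$ is conjugate (via the translation isomorphisms $T_a A \cong \Lie A$ and $T_{Na} A \cong \Lie A$) to the differential $d[N]_A|_e$ at the identity. The differential at the identity is multiplication by $N$ on $\Lie A$, which is an isomorphism because $C$ has characteristic zero. Thus $d[N]_A$ is an isomorphism at every point. Since $A$ is smooth of constant dimension, a standard smoothness/Jacobian criterion (e.g. the rigid analytic analog of \cite[0B2G]{stacks-project}) shows that $[N]_A$ is étale.

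Next, since $A$ is proper over $\Spa C$ and $[N]_A : A \to A$ is a morphism between $C$-proper spaces, $[N]_A$ is itself proper. In the rigid analytic category, a proper étale morphism is finite étale: étaleness forces the geometric fibers to be étale over the residue field, while properness forces them to be proper over the residue field, so each fiber is a finite disjoint union of copies of $\Spa C$, and a proper morphism with finite discrete fibers is finite. (One can also invoke the rigid analytic form of Zariski's Main Theorem.)

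Finally, for surjectivity — which is implicit in the statement since a finite étale map to an irreducible target is automatically surjective as soon as it is non-empty — I would note that the image of $[N]_A$ is open (as $[N]_A$ is étale) and closed (as $[N]_A$ is proper) and contains the identity, hence equals $A$ by connectedness of $A$.

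There is no serious obstacle here; the only subtlety is that, unlike the algebraic case in \cite{MumfordAbVar}, we cannot and need not compute the degree of $[N]_A$ via intersection numbers with an ample line bundle (which may not exist on a general abeloid variety). We bypass this entirely by using the analytic fact that proper + étale implies finite étale, together with the characteristic-zero hypothesis built into $C/\mathbb{Q}_p$ which is what makes $d[N]_A|_e$ invertible.
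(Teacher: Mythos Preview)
Your proposal is correct and follows essentially the same approach as the paper: establish \'etaleness from the differential being multiplication by $N$ (invertible in characteristic zero), establish properness from properness of $A$, and combine to get finite \'etale. The only cosmetic difference is that the paper phrases the finiteness step as ``\'etale $\Rightarrow$ locally quasi-finite, and quasi-compactness forces $\ker[N]_A$ finite,'' whereas you invoke ``proper $+$ \'etale $\Rightarrow$ finite'' directly; these are equivalent routes to the same conclusion.
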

\begin{proof}
    We first observe that $[N]_A$ is proper (and hence quasi-compact).  Indeed, $[N]_A$ factors as
\[	A\xrightarrow{\Gamma_{[N]_A}}A\times_{\Spa K}A\xrightarrow{\mathrm{pr}_2}A	\]
where the first map is the graph morphism and the second is projection.  Since $A\rightarrow\Spa C$ is separated, the graph morphism is a closed immersion (hence proper), and $\mathrm{pr}_2$ is the base-change of the proper morphism $A\rightarrow \Spa C$.  Hence the composition is proper.
Now, $[N]_A$ is \'etale, since it induces an isomorphism on the tangent space of $A$.  But \'etale morphisms of rigid spaces are locally (on the source) quasi-finite; quasi-compactness implies that $\ker [N]_A$ is finite.  Hence $[N]_A$ is finite \'{e}tale.
\end{proof}

\begin{proposition}\label{prop.profinite-etale-fundamental-group}
    The profinite \'etale fundamental group $\pi_1(A, e_A)$ of an abeloid variety $A/C$ is $T_{\widehat{\mathbb{Z}}}A$. 
\end{proposition}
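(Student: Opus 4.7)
The plan is to show that the cofinal system of connected pointed finite \'{e}tale covers of $A$ is precisely the system of multiplication-by-$N$ maps $[N]_A: A \to A$, at which point the identification $\pi_1(A,e_A) = \varprojlim_N A[N](C) = T_{\widehat{\mathbb{Z}}}A$ follows from the usual presentation of the profinite \'{e}tale fundamental group as the automorphism group of a fiber functor on pointed connected finite \'{e}tale covers. This is the standard Mumford argument (\cite{MumfordAbVar}, \textsection IV), adapted to the rigid analytic setting using the two preparatory lemmas of this appendix.

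First, I would fix a connected finite \'{e}tale cover $\pi: X \to A$ together with a lift $e_X \in X(C)$ of $e_A$. By \cref{lemma.abeloid-variety-finite-etale-cover-group-structure} we may equip $X$ with an abeloid structure (with identity $e_X$) so that $\pi$ becomes a morphism of rigid analytic groups. Let $K := \ker \pi \leq X(C)$; since $\pi$ is finite \'{e}tale, $K$ is a finite group of some order $N$. By Lagrange, $[N]_X$ vanishes identically on $K$, and a standard argument using properness and flatness of $\pi$ shows that $[N]_X$ therefore factors through $\pi$, yielding a morphism $\phi: A \to X$ with $\phi \circ \pi = [N]_X$. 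Composing with $\pi$ on the left and using the surjectivity of $\pi$ gives $\pi \circ \phi = [N]_A$. Thus $\pi$ is dominated, as a pointed finite \'{e}tale cover, by $[N]_A: A \to A$, which is itself a connected finite \'{e}tale cover by \cref{lemma.mult-by-N-isogeny}.

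Next, I would show that each $[N]_A$ is Galois with Galois group $A[N](C)$: translation by elements of $A[N](C)$ gives a group of deck transformations of $[N]_A$, acting freely and transitively on the fiber over $e_A$ (which is exactly $A[N](C)$), so these exhaust the automorphisms. Combined with the domination statement of the preceding paragraph, the inverse system of $[N]_A$'s (ordered by divisibility, with transition maps given by multiplication maps) is a cofinal system of connected pointed Galois covers of $(A,e_A)$. Taking the limit of their automorphism groups yields
\[ \pi_1(A,e_A) \;=\; \varprojlim_N \mathrm{Aut}\bigl([N]_A : A \to A\bigr) \;=\; \varprojlim_N A[N](C) \;=\; T_{\widehat{\mathbb{Z}}}A, \]
as desired.

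The main obstacle is the factorization step: producing the morphism $\phi: A \to X$ from the vanishing of $[N]_X$ on $K$. In the algebraic category this is an immediate consequence of the fact that $\pi$ is a quotient morphism by a finite flat subgroup scheme. In the rigid analytic setting one argues similarly using that $\pi$ is a finite \'{e}tale (hence faithfully flat) homomorphism of rigid analytic groups with kernel $K$, so $\pi$ realizes $A$ as the quotient $X/K$ in the category of rigid analytic groups, and any morphism out of $X$ that is constant on $K$-cosets descends uniquely to $A$. Once this is in hand, the remaining cofinality and Galois computations are formal.
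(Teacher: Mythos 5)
Your proposal is correct and follows essentially the same route as the paper: equip the cover with an abeloid group structure via \cref{lemma.abeloid-variety-finite-etale-cover-group-structure}, observe that $[N]$ on the cover kills the kernel and hence factors through the covering map, and cancel the surjective covering map to conclude that the cover is dominated by $[N]_A$, which is finite \'etale by \cref{lemma.mult-by-N-isogeny}. The only cosmetic difference is that you spell out the final cofinality/Galois-group bookkeeping and the descent justification for the factorization, which the paper leaves implicit.
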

\begin{proof}
It suffices to show that, for any finite \'{e}tale cover $f: A' \rightarrow A$, we can find a finite \'{e}tale $A \xrightarrow{g} A'$ such that $f \circ g$ is $[N]_A$ for some $N$.  Given such a finite \'etale cover $f:A'\rightarrow A$ of degree $N$, we have seen in \cref{lemma.abeloid-variety-finite-etale-cover-group-structure} that $A'$ must be an abeloid variety and that $f$ is a morphism.  The multiplication map $[N]_{A'}:A'\rightarrow A'$ annihilates the kernel of $A'\rightarrow A$, so $[N]_{A'}$ factors as $A'\xrightarrow f A\xrightarrow g A'$ for some morphism $g$, which is finite \'{e}tale because $[N]_{A'}$ is finite \'{e}tale by \cref{lemma.mult-by-N-isogeny}.  Since 
\[ f \circ g \circ f= f \circ [N]_{A'}= [N]_{A} \circ f, \]
we conclude $f \circ g= [N]_A$, as desired. 
\end{proof}

\def\Luoma#1{\uppercase\expandafter{\romannumeral#1}}
\def\luoma#1{\romannumeral#1}
\section{Perfectoid Covers of Abelian Varieties via Sen Theory and Purity, by Tongmu He}\label{appendix.he}

In this appendix, we give a different proof of the special case of \cref{prop.perfectoid-covers-abelian} when the semi-abelian rigid analytic variety $E$ over $C$ is defined over a discrete valuation subfield. This proof requires neither the reduction to the maximal isotropic subgroup nor Scholze's results on canonical subgroups. Instead, we use the stalkwise perfectoidness via Sen theory of \cite{he2024perfd} and the purity for perfectoidness of \cite{he2024purity}. To invoke the stalkwise perfectoidness results, one needs only the computation of the Sen morphism given in \cref{prop.geometric-sen-semi-abeloid}. To invoke the purity of perfectoidness, one needs to know furthermore that the tower of abelian varieties in question admits a sufficiently nice integral model. We establish this below in the good reduction case, then reduce to the good reduction case as in the earlier proof of \cref{prop.perfectoid-covers-abelian}. 

Let $K$ be a complete discrete valuation field extension of $\mathbb{Q}_p$ with perfect residue field, and let $\overline{K}$ be an algebraic closure of $K$.

\begin{lemma}\label{lem:ab-var-int-model}
	Let $\mathcal{A}$ be an abelian scheme over $\mathcal{O}_{\overline{K}}$, $A=\mathcal{A}_\eta$ the generic fibre of $\mathcal{A}$, $H\subseteq A[p^n]$ a closed subgroup scheme over $\overline{K}$ (where $n\in\mathbb{N}$).
	\begin{enumerate}
		\renewcommand{\labelenumi}{{\rm(\theenumi)}}
		\item The scheme theoretic closure $\mathcal{H}$ of $H$ in $\mathcal{A}[p^n]$ is a closed subgroup scheme of $\mathcal{A}[p^n]$ finite flat over $\mathcal{O}_{\overline{K}}$ with generic fibre $\mathcal{H}_\eta=H$.
		\item The fppf quotient $\mathcal{A}/\mathcal{H}$ is represented by an abelian scheme over $\mathcal{O}_{\overline{K}}$.
	\end{enumerate}
\end{lemma}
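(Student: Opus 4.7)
The plan is to reduce both statements to the classical case of an abelian scheme over a discrete valuation ring by a spreading-out argument, since the non-Noetherianity of $\mathcal{O}_{\overline{K}}$ is the only obstacle to invoking standard results directly. Concretely, $\mathcal{O}_{\overline{K}}$ is the filtered colimit of the rings of integers $\mathcal{O}_L$ over finite extensions $L/K$. Since $\mathcal{A}$ is of finite presentation over $\mathcal{O}_{\overline{K}}$, by standard limit arguments (e.g.\ EGA IV$_3$ \S 8) it descends to an abelian scheme $\mathcal{A}_0$ over some such $\mathcal{O}_L$; after enlarging $L$ to some $L'$, the closed subgroup scheme $H \subseteq \mathcal{A}[p^n]$ descends to a closed subgroup scheme $H_0 \subseteq \mathcal{A}_{0,L'}[p^n]$.

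For part (1), I would work over the DVR $\mathcal{O}_{L'}$. Note that $\mathcal{A}_0[p^n]_{\mathcal{O}_{L'}}$ is a finite flat commutative group scheme, since multiplication by $p^n$ on an abelian scheme is finite locally free. Let $\mathcal{H}_0$ be the schematic closure of $H_0$ inside it. Over a DVR, the schematic closure of a closed subscheme of a flat scheme is automatically flat (since torsion-freeness and flatness coincide), has the given generic fibre, and finiteness is inherited from $\mathcal{A}_0[p^n]_{\mathcal{O}_{L'}}$. To upgrade $\mathcal{H}_0$ to a subgroup scheme, I would use that flat base change commutes with schematic image, so that $\mathcal{H}_0 \times_{\mathcal{O}_{L'}} \mathcal{H}_0$ is the schematic closure of $H_0\times H_0$ in $\mathcal{A}_0[p^n]_{\mathcal{O}_{L'}}^2$; by the universal property of schematic image, multiplication, inversion, and the unit factor through $\mathcal{H}_0$ because they do so on the generic fibre. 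Then $\mathcal{H} := \mathcal{H}_0 \otimes_{\mathcal{O}_{L'}} \mathcal{O}_{\overline{K}}$ is finite flat and, by flat base change, equals the schematic closure of $H$ in $\mathcal{A}[p^n]$.

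For part (2), the core is the classical result that the fppf quotient of an abelian scheme over a DVR by a finite flat closed subgroup scheme is again an abelian scheme (a consequence of Raynaud's theory of quotients by flat equivalence relations, and the fact that abelian schemes over DVRs are projective so that an appropriate power of an ample line bundle descends to the quotient). Applying this to $\mathcal{A}_{0,\mathcal{O}_{L'}}$ and $\mathcal{H}_0$ produces an abelian scheme $\mathcal{B}_0$ over $\mathcal{O}_{L'}$ fitting into a finite flat torsor $\mathcal{A}_{0,\mathcal{O}_{L'}} \to \mathcal{B}_0$ under $\mathcal{H}_0$. Base-changing to $\mathcal{O}_{\overline{K}}$ yields an abelian scheme representing $\mathcal{A}/\mathcal{H}$, since formation of fppf quotients commutes with flat base change.

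The only real subtlety is the bookkeeping of the descent: one must ensure that $\mathcal{A}$, $H$, the multiplication map, the unit, and the inversion all descend compatibly to a common $\mathcal{O}_{L'}$, and that the formation of the schematic image and of the quotient commute with the base change $\mathcal{O}_{L'} \to \mathcal{O}_{\overline{K}}$. Both commutations hold because $\mathcal{O}_{\overline{K}}$ is flat (even faithfully flat) over $\mathcal{O}_{L'}$. Once the descent to a DVR is in place, everything else is a direct application of well-known results in the theory of abelian schemes.
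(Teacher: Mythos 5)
Your proposal is correct and follows essentially the same route as the paper: descend $\mathcal{A}$ and $H$ to the ring of integers of a finite extension of $K$, observe that the schematic closure there is a finite flat closed subgroup scheme (the paper cites SGA~3, Exp.~VIII, 7.1 for the group structure you derive by hand via flat base change of schematic images), form the quotient by Grothendieck's representability theorem for quotients by finite flat group schemes, deduce it is an abelian scheme, and conclude by flat base change to $\mathcal{O}_{\overline{K}}$. The only cosmetic difference is that the paper deduces properness, smoothness, and geometric connectedness of the quotient by faithfully flat descent along the finite flat map $\mathcal{A}\to\mathcal{A}/\mathcal{H}$, where you simply invoke the classical statement.
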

\begin{proof}
	After enlarging $K$, we may assume that $\mathcal{A}$ and $H$ are defined over $\mathcal{O}_K$. As $H\to \mathcal{A}[p^n]$ is a morphism of group schemes over $\mathcal{O}_K$, it is clear that the scheme theoretic closure $\mathcal{H}$ of $H$ in $\mathcal{A}[p^n]$ is also a group scheme over $\mathcal{O}_K$ (\cite[\Luoma{8}.7.1]{sga3-2}). Moreover, since $H$ and $\mathcal{A}[p^n]$ are finite flat over $K$ and $\mathcal{O}_K$ respectively, we see that $\mathcal{H}$ is finite flat over $\mathcal{O}_K$ with $\mathcal{H}_\eta=H$. 
	
	Then, by a representability theorem of Grothendieck (\cite[\Luoma{5}.4.1]{sga3-1}, see also \cite[3.5]{tate1997flat}), the fppf quotient $\mathcal{A}/\mathcal{H}$ is represented by a group scheme over $\mathcal{O}_K$ and moreover $\mathcal{A}\to \mathcal{A}/\mathcal{H}$ is finite and faithfully flat. Thus, $\mathcal{A}/\mathcal{H}$ is proper smooth with geometrically connected fibres as $\mathcal{A}$ is so by faithfully flat descent and the Noetherianity of $\mathcal{O}_K$ (\cite[\href{https://stacks.math.columbia.edu/tag/05B5}{05B5}, \href{https://stacks.math.columbia.edu/tag/0CYQ}{0CYQ}]{stacks-project}). In other words, $\mathcal{A}/\mathcal{H}$ is an abelian scheme over $\mathcal{O}_K$. The conclusion follows after taking the flat base change of $\mathcal{H}$ along $\mathcal{O}_K\to \mathcal{O}_{\overline{K}}$.
\end{proof}

\begin{proposition}\label{prop:ab-var-int-model}
	Let $\mathcal{A}$ be an abelian scheme over $\mathcal{O}_{\overline{K}}$, $A=\mathcal{A}_\eta$ the generic fibre of $\mathcal{A}$, $\widetilde{A}=\lim_{[p]}A$ the pro-finite \'etale $A$-scheme with Galois group $T_pA$, $\widetilde{A}/H$ the pro-finite \'etale $A$-scheme corresponding to a closed subgroup $H\subseteq T_pA$. Then, there is a canonical $\mathcal{O}_{\overline{K}}$-scheme $\widetilde{\mathcal{A}}/\mathcal{H}$ such that 
	\begin{enumerate}
		\renewcommand{\labelenumi}{{\rm(\theenumi)}}
		\item its generic fibre $(\widetilde{\mathcal{A}}/\mathcal{H})_\eta=\widetilde{A}/H$, and that
		\item it is a cofiltered limit of abelian schemes over $\mathcal{O}_{\overline{K}}$ whose transition morphisms are finite, faithfully flat, and finite \'etale over $\overline{K}$.
	\end{enumerate}
\end{proposition}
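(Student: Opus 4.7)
The plan is to realize $\widetilde{A}/H$ as an explicit pro-system of finite \'etale covers of $A$, then promote it level-by-level to a cofiltered system of abelian schemes over $\mathcal{O}_{\overline{K}}$ via \cref{lem:ab-var-int-model}. Let $H_n \subseteq A[p^n]$ denote the image of $H$ under the natural surjection $T_p A \twoheadrightarrow A[p^n]$, viewed as a closed subgroup scheme (finite \'etale over $\overline{K}$). Since $A = \widetilde{A}/p^n T_p A$ at level $n$ and the subgroups $H + p^n T_p A$ are cofinal among open subgroups of $T_p A$ containing $H$, the pro-object $\widetilde{A}/H$ is canonically isomorphic to the system $(A/H_n)_{n \geq 0}$ with transition $A/H_{n+1} \to A/H_n$ induced by $[p]: A \to A$, using that $[p](H_{n+1}) = H_n$.

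At each finite level I would apply \cref{lem:ab-var-int-model}: let $\mathcal{H}_n$ be the scheme-theoretic closure of $H_n$ in $\mathcal{A}[p^n]$. By the lemma, $\mathcal{H}_n$ is a closed finite flat subgroup scheme of $\mathcal{A}[p^n]$ with generic fibre $H_n$, and the fppf quotient $\mathcal{A}_n := \mathcal{A}/\mathcal{H}_n$ is an abelian scheme over $\mathcal{O}_{\overline{K}}$ with generic fibre $A/H_n$. To lift the transition maps, I would observe that the composite $\mathcal{H}_{n+1} \hookrightarrow \mathcal{A} \xrightarrow{[p]} \mathcal{A}$ factors through $\mathcal{A}[p^n]$, and that its generic fibre lands in $H_n$; since $\mathcal{H}_{n+1}$ is flat over $\mathcal{O}_{\overline{K}}$ and $\mathcal{H}_n$ is the scheme-theoretic closure of $H_n$ in $\mathcal{A}[p^n]$, the map must then factor through $\mathcal{H}_n$. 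Hence $[p]$ descends to a morphism of abelian schemes $\varphi_n: \mathcal{A}_{n+1} \to \mathcal{A}_n$ whose generic fibre is the finite \'etale cover $A/H_{n+1} \to A/H_n$ induced by multiplication by $p$. The kernel of $\varphi_n$ is $[p]^{-1}(\mathcal{H}_n)/\mathcal{H}_{n+1}$, a subquotient of $\mathcal{A}[p^{n+1}]$ (since $\mathcal{H}_n \subseteq \mathcal{A}[p^n]$), hence finite; so $\varphi_n$ is an isogeny of abelian schemes, and in particular finite and faithfully flat.

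Finally, I would set $\widetilde{\mathcal{A}}/\mathcal{H} := \lim_n \mathcal{A}_n$. Because the transition maps $\varphi_n$ are affine (being finite), this cofiltered limit exists in the category of $\mathcal{O}_{\overline{K}}$-schemes, so property~(2) holds by construction. Property~(1) then follows from the fact that base change along $\mathcal{O}_{\overline{K}} \to \overline{K}$ commutes with cofiltered limits of schemes along affine transition maps, giving $(\widetilde{\mathcal{A}}/\mathcal{H})_\eta = \lim_n (\mathcal{A}_n)_\eta = \lim_n A/H_n = \widetilde{A}/H$. The argument is essentially formal once \cref{lem:ab-var-int-model} is in hand; I expect the only point requiring real care to be the verification that $[p](\mathcal{H}_{n+1}) \subseteq \mathcal{H}_n$ at the integral level (rather than merely on the generic fibre), which is handled above by the scheme-theoretic closure argument using flatness of $\mathcal{H}_{n+1}$.
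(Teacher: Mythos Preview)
Your proof is correct and follows essentially the same route as the paper: define $H_n$ as the image of $H$ in $A[p^n]$, take $\mathcal{H}_n$ to be its scheme-theoretic closure in $\mathcal{A}[p^n]$, apply \cref{lem:ab-var-int-model} to get abelian schemes $\mathcal{A}/\mathcal{H}_n$, and set $\widetilde{\mathcal{A}}/\mathcal{H}=\lim_n \mathcal{A}/\mathcal{H}_n$. Your closure-and-flatness justification that $[p](\mathcal{H}_{n+1})\subseteq\mathcal{H}_n$ is in fact more explicit than what the paper writes, and your verification that $\varphi_n$ is finite faithfully flat via finiteness of its kernel is a clean alternative to the paper's descent-style argument using that $[p]:\mathcal{A}\to\mathcal{A}$ and $\mathcal{A}\to\mathcal{A}/\mathcal{H}_n$ are already finite faithfully flat.
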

\begin{proof}
	Let $H_n$ be the image of $H\to A[p^n](\overline{K})$. Since $T_pA\cong \mathbb{Z}_p^{2g}$ with $A[p^n](\overline{K})\cong T_pA/p^nT_pA\cong (\mathbb{Z}/p^n\mathbb{Z})^{2g}$ (where $g=\dim A$, \cite[\href{https://stacks.math.columbia.edu/tag/03RP}{03RP}]{stacks-project}), we have $H=\lim_{n\in\mathbb{N}}H/H\cap p^nT_pA=\lim_{[p]}H_n$. Recall that the category of finite group schemes over $\overline{K}$ (which are automatically \'etale, see \cite[\href{https://stacks.math.columbia.edu/tag/047N}{047N}]{stacks-project}) is equivalent to the category of finite groups by taking the group of $\overline{K}$-points. Thus, we may regard $H_n$ as a closed subgroup scheme of $A[p^n]$ and we have $\widetilde{A}/H=\lim_{[p]} A/H_n$.
	
	Let $\mathcal{H}_n$ be the scheme theoretic closure of $H_n$ in $\mathcal{A}[p^n]$ and $\widetilde{\mathcal{A}}=\lim_{[p]}\mathcal{A}$. Then, the fppf quotient $\mathcal{A}/\mathcal{H}_n$ is an abelian scheme over $\mathcal{O}_{\overline{K}}$ with generic fibre $(\mathcal{A}/\mathcal{H}_n)_\eta=A/H_n$ by Lemma \ref{lem:ab-var-int-model}. Moreover, $(\mathcal{A}/\mathcal{H}_n)_{n\in\mathbb{N}}$ forms a directed inverse system of abelian schemes over $\mathcal{O}_{\overline{K}}$ with transition morphisms given by $[p]$. Since $[p]:\mathcal{A}\to \mathcal{A}$ and $\mathcal{A}\to \mathcal{A}/\mathcal{H}_n$ are finite, faithfully flat, and finite \'etale after inverting $p$ (see \cite[\href{https://stacks.math.columbia.edu/tag/0BFG}{0BFG}, \href{https://stacks.math.columbia.edu/tag/0BFH}{0BFH}, \href{https://stacks.math.columbia.edu/tag/039C}{039C}]{stacks-project} and the proof of Lemma \ref{lem:ab-var-int-model}), so is $[p]:\mathcal{A}/\mathcal{H}_{n+1}\to \mathcal{A}/\mathcal{H}_n$ for any $n\in\mathbb{N}$. Therefore, the conclusion follows by taking $\widetilde{\mathcal{A}}/\mathcal{H}=\lim_{[p]}\mathcal{A}/\mathcal{H}_n$.
\end{proof}

\begin{remark}\label{rem:ab-var-int-model-K}
	Assume that $\mathcal{A}$ is defined over $\mathcal{O}_K$. Let $K_\infty$ be the Galois extension of $K$ such that $\mathrm{Gal}(\overline{K}/K_\infty)$ is the kernel of the continuous homomorphism $\mathrm{Gal}(\overline{K}/K)\to \mathrm{Aut}_{\mathbb{Z}_p}(T_pA)\cong \mathrm{GL}_{2g}(\mathbb{Z}_p)$. Thus, $\mathrm{Gal}(K_\infty/K)\subseteq \mathrm{GL}_{2g}(\mathbb{Z}_p)$ is a $p$-adic analytic group (\cite[3.11]{he2022sen}) and $A[p^n](\overline{K})=A[p^n](K_\infty)$. Therefore, $H_n\subseteq A[p^n]$ is defined over $K_\infty$ and thus so are $A/H_n$, $\widetilde{A}/H$, $\mathcal{H}_n$, $\mathcal{A}/\mathcal{H}_n$ and $\widetilde{\mathcal{A}}/\mathcal{H}$.
\end{remark}

\begin{remark}\label{rem:ab-var-int-model}
	Note that each fibre of an abelian scheme is irreducible. The transition morphism $[p]:\mathcal{A}/\mathcal{H}_{n+1}\to \mathcal{A}/\mathcal{H}_n$ preserves the unique generic point of the generic (resp. special) fibre as it is faithfully flat (\cite[3.5.(1)]{he2024purity}). In particular, the generic (resp. special) fibre of the $\mathcal{O}_{\overline{K}}$-scheme $\widetilde{\mathcal{A}}/\mathcal{H}$ is also irreducible whose generic point $y$ (resp. $x$) lies over that of each $\mathcal{A}/\mathcal{H}_n$ (\cite[3.7]{he2024purity}). We remark that the local ring $\mathcal{O}_{\widetilde{\mathcal{A}}/\mathcal{H},x}$ is a valuation ring of height $1$ extension of $\mathcal{O}_{\overline{K}}$ and its fraction field $\mathcal{O}_{\widetilde{\mathcal{A}}/\mathcal{H},x}[1/p]$ is the residue field $\kappa(y)$ of $\widetilde{A}/H$ at $y$ (\cite[8.4.(3)]{he2024purity}).
\end{remark}

\begin{definition}\label{defn:perfd-scheme}
	Let $X$ be an $\mathcal{O}_{\overline{K}}$-scheme. We say that $X$ is \emph{pre-perfectoid} if every affine open subset of $X$ is the spectrum of a pre-perfectoid $\mathcal{O}_{\overline{K}}$-algebra in the sense of \cite[2.3]{he2024purity}.
\end{definition}

It is easy to see that $X$ is pre-perfectoid if and only if it admits an affine open covering by spectra of pre-perfectoid $\mathcal{O}_{\overline{K}}$-algebras.

\begin{corollary}\label{cor:perfd-purity}
	With the notation in {\rm Proposition \ref{prop:ab-var-int-model}}, let $x$ be the unique generic point of the special fibre of the $\mathcal{O}_{\overline{K}}$-scheme $\widetilde{\mathcal{A}}/\mathcal{H}$ {\rm (Remark \ref{rem:ab-var-int-model})}. If $\mathcal{O}_{\widetilde{\mathcal{A}}/\mathcal{H},x}$ is pre-perfectoid, then $\widetilde{\mathcal{A}}/\mathcal{H}$ is pre-perfectoid.
\end{corollary}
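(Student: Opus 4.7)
The plan is to deduce pre-perfectoidness of $\widetilde{\mathcal{A}}/\mathcal{H}$ globally from the pre-perfectoidness of the single local ring $\mathcal{O}_{\widetilde{\mathcal{A}}/\mathcal{H},x}$ by invoking the purity for perfectoidness established in \cite{he2024purity}. The structural input from \cref{prop:ab-var-int-model} --- namely that $\widetilde{\mathcal{A}}/\mathcal{H}$ is a cofiltered limit of abelian schemes over $\mathcal{O}_{\overline{K}}$ along transition morphisms which are finite, faithfully flat, and finite \'etale after inverting $p$ --- is precisely the sort of tower to which the purity results of \cite{he2024purity} apply.

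First I would reduce to an affine statement. Pick any affine open $U \subseteq \widetilde{\mathcal{A}}/\mathcal{H}$. Since each transition morphism $\mathcal{A}/\mathcal{H}_{n+1} \to \mathcal{A}/\mathcal{H}_n$ is affine (being finite), such a $U$ arises as an inverse limit $U = \lim_n U_n = \Spec R_\infty$ of affine opens $U_n = \Spec R_n \subseteq \mathcal{A}/\mathcal{H}_n$, where $R_\infty$ is the colimit (or its $p$-adic completion, depending on the convention; see \cite[2.3]{he2024purity}) of the rings $R_n$. Each $R_n$ is a Noetherian normal domain, and the transitions $R_n \to R_{n+1}$ inherit the global properties: finite, faithfully flat, and finite \'etale after inverting $p$. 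It therefore suffices to show that each such $R_\infty$ is pre-perfectoid.

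Next I would locate $x$ inside every such $U$. Since each abelian scheme $\mathcal{A}/\mathcal{H}_n$ has irreducible special fibre, that special fibre has a unique generic point $x_n$, and by \cref{rem:ab-var-int-model} the point $x$ lies over every $x_n$. Any non-empty affine open of $\mathcal{A}/\mathcal{H}_n$ meets the special fibre in a non-empty open subset, which necessarily contains $x_n$; hence $x_n \in U_n$ for all $n$ and $x \in U$. Consequently $\mathcal{O}_{\widetilde{\mathcal{A}}/\mathcal{H},x}$ is a localization of $R_\infty$, and by hypothesis this localization is pre-perfectoid.

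Finally, I would apply the purity of perfectoidness of \cite{he2024purity} to the tower $(R_n)_n$: the hypothesis on the transition morphisms (finite faithfully flat, finite \'etale after inverting $p$) together with the pre-perfectoidness of the local ring at the generic point of the special fibre is exactly the input required, and the conclusion is that $R_\infty$ itself is pre-perfectoid. Since this holds for every affine open $U$, we obtain that $\widetilde{\mathcal{A}}/\mathcal{H}$ is pre-perfectoid. The main obstacle is purely bookkeeping: one must match the precise formulation of the purity statement in \cite{he2024purity} with the setup here, verifying that the localization at $x$ plays the role of the distinguished point in that theorem and that no additional finiteness or normality hypothesis is violated; once this dictionary is set up, the corollary follows formally.
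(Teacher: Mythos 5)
Your overall strategy---cover $\widetilde{\mathcal{A}}/\mathcal{H}$ by affine opens arising as limits of the finite-level schemes and feed each resulting colimit algebra into the purity theorem of \cite{he2024purity}---is the same as the paper's, which takes the affine open covering of $\widetilde{\mathcal{A}}/\mathcal{H}$ pulled back from an affine open covering of $\mathcal{A}$ along the $0$-th projection and applies \cite[8.6]{he2024purity} to each of the resulting ind-smooth algebras $R_i$. However, two of your intermediate steps are not correct as written. First, you claim that an \emph{arbitrary} affine open $U$ of the limit scheme is an inverse limit of affine opens $U_n\subseteq\mathcal{A}/\mathcal{H}_n$. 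A quasi-compact open of a cofiltered limit with affine transition morphisms is the preimage of a quasi-compact open at some finite level, but that open need not be affine, nor need its preimages at higher levels be; so this reduction is unjustified. It is also unnecessary: by the remark following \cref{defn:perfd-scheme}, it suffices to exhibit a single affine open covering by spectra of pre-perfectoid algebras, and the covering pulled back from $\mathcal{A}$ does the job, with each chart visibly $\mathrm{Spec}$ of a filtered colimit of \emph{smooth} $\mathcal{O}_{\overline{K}}$-algebras along finite faithfully flat maps that are finite \'etale after inverting $p$ (note these are not Noetherian, since $\mathcal{O}_{\overline{K}}$ is not; the operative hypothesis for \cite[8.6]{he2024purity} is ind-smoothness, not Noetherian normality).

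Second, the assertion that every non-empty affine open of $\mathcal{A}/\mathcal{H}_n$ meets the special fibre is false: any affine open of the generic fibre $A/H_n$ is an affine open of $\mathcal{A}/\mathcal{H}_n$ disjoint from the special fibre, so you cannot conclude $x_n\in U_n$, hence $x\in U$, for an arbitrary affine $U$. What the argument actually uses is that the hypothesis on $\mathcal{O}_{\widetilde{\mathcal{A}}/\mathcal{H},x}$---by \cref{rem:ab-var-int-model} a height-one valuation ring sitting at the generic point of the irreducible special fibre, over the generic point of the special fibre of every $\mathcal{A}/\mathcal{H}_n$---is exactly the local input that \cite[8.6]{he2024purity} requires for each chart of the chosen covering. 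Once the covering is taken as the paper does, this matching is immediate and the corollary follows; with your covering it does not, because the distinguished point may simply be absent from the chart.
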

\begin{proof}
	We take an affine open covering of $\mathcal{A}$. Then, it induces an affine open covering $\bigcup_{i\in I}\mathrm{Spec}(R_i)$ of $\widetilde{\mathcal{A}}/\mathcal{H}$ by the $0$-th projection $\widetilde{\mathcal{A}}/\mathcal{H}\to \mathcal{A}$. Applying the purity for perfectoidness to each ind-smooth algebra $R_i$ \cite[8.6]{he2024purity}, we see that $R_i$ is pre-perfectoid and thus $\widetilde{\mathcal{A}}/\mathcal{H}$ is pre-perfectoid.
\end{proof}

\begin{lemma}\label{lem:perfd-analytification}
	Let $X$ be a flat $\mathcal{O}_{\overline{K}}$-scheme, $\widehat{X}$ its associated $p$-adic formal scheme over $\mathrm{Spf}(\mathcal{O}_{\widehat{\overline{K}}})$ {\rm\cite[2.5.2]{abbes2010rigid}}, $\widehat{X}^{\mathrm{ad}}$ the associated adic space over $\mathrm{Spa}(\mathcal{O}_{\widehat{\overline{K}}},\mathcal{O}_{\widehat{\overline{K}}})$ {\rm \cite[2.2.1]{scholze2012moduli}}, \[\widehat{X}^{\mathrm{ad}}_\eta=\widehat{X}^{\mathrm{ad}}\times_{\mathrm{Spa}(\mathcal{O}_{\widehat{\overline{K}}},\mathcal{O}_{\widehat{\overline{K}}})}\mathrm{Spa}(\widehat{\overline{K}},\mathcal{O}_{\widehat{\overline{K}}}).\]
    If $X$ is pre-perfectoid, then $\widehat{X}^{\mathrm{ad}}_\eta$ is perfectoid in the sense of \cite[2.3.3]{scholze2012moduli}.
\end{lemma}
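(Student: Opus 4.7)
The plan is to reduce to the affine case and then apply the definition of pre-perfectoid directly. Being perfectoid for an analytic adic space over $\mathrm{Spa}(\widehat{\overline{K}}, \mathcal{O}_{\widehat{\overline{K}}})$ is a condition that can be checked on any open cover by affinoids (\cite[2.3.3]{scholze2012moduli}), and an affine open cover $X = \bigcup_{i \in I} \mathrm{Spec}(R_i)$ induces an open cover of the $p$-adic formal scheme $\widehat{X}$ by the formal affines $\mathrm{Spf}(\widehat{R_i})$, where $\widehat{R_i}$ denotes the $p$-adic completion, and hence an open cover of the generic fiber $\widehat{X}^{\mathrm{ad}}_\eta$ by the corresponding affinoid adic spaces. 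It therefore suffices to treat the case where $X = \mathrm{Spec}(R)$ for a pre-perfectoid $\mathcal{O}_{\overline{K}}$-algebra $R$ in the sense of \cite[2.3]{he2024purity}.

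In this affine case, the associated $p$-adic formal scheme is $\widehat{X} = \mathrm{Spf}(\widehat{R})$ over $\mathrm{Spf}(\mathcal{O}_{\widehat{\overline{K}}})$, the associated adic space is $\widehat{X}^{\mathrm{ad}} = \mathrm{Spa}(\widehat{R}, \widehat{R})$, and the generic fiber identifies with $\mathrm{Spa}(\widehat{R}[1/p], \widehat{R}^+)$, where $\widehat{R}^+$ is the integral closure of the image of $\widehat{R}$ in $\widehat{R}[1/p]$. By the definition of pre-perfectoid \cite[2.3]{he2024purity}, the $p$-adic completion $\widehat{R}$ is a perfectoid $\mathcal{O}_{\widehat{\overline{K}}}$-algebra, so that $\widehat{R}[1/p]$, equipped with its natural Banach algebra structure coming from the lattice $\widehat{R}$, is a perfectoid $\widehat{\overline{K}}$-algebra. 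Since $\widehat{R}$ is automatically open and bounded in $\widehat{R}[1/p]$, the Huber pair $(\widehat{R}[1/p], \widehat{R}^+)$ is an affinoid perfectoid pair in the sense of \cite[2.3.3]{scholze2012moduli}, and hence $\widehat{X}^{\mathrm{ad}}_\eta$ is perfectoid.

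The only genuine point to verify is that the notion of pre-perfectoid $\mathcal{O}_{\overline{K}}$-algebra employed in \cite[2.3]{he2024purity} is precisely the condition that the $p$-adic completion be a perfectoid $\mathcal{O}_{\widehat{\overline{K}}}$-algebra in a sense that matches Scholze's definition in \cite[2.3.3]{scholze2012moduli}; this is essentially definitional, and there is no geometric obstacle beyond unwinding the two definitions. Everything else in the argument is formal: reduction to the affine case, compatibility of $p$-adic completion with the analytification functor, and identification of the generic fiber with the standard affinoid adic space attached to the Huber pair $(\widehat{R}[1/p], \widehat{R}^+)$.
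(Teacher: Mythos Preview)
Your argument is correct and follows essentially the same route as the paper: reduce to an affine open cover, identify the generic fiber over each piece with $\mathrm{Spa}(\widehat{R_i}[1/p],R_i^+)$, and conclude that this is a perfectoid affinoid because $R_i$ is pre-perfectoid. The only difference is that the paper does not treat the final verification as purely definitional but instead cites \cite[5.39, 5.40]{he2024coh} to confirm that $(\widehat{R_i}[1/p],R_i^+)$ is an affinoid $(\widehat{\overline{K}},\mathcal{O}_{\widehat{\overline{K}}})$-algebra and that it is perfectoid in Scholze's sense.
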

\begin{proof}
	Let $\bigcup_{i\in I}\mathrm{Spec}(R_i)$ be an affine open covering of $X$ and let $R_i^+$ be the integral closure of the $p$-adic completion $\widehat{R_i}$ in $\widehat{R_i}[1/p]$. Then, $(\widehat{R_i}[1/p],R_i^+)$ is an affinoid $(\widehat{\overline{K}},\mathcal{O}_{\widehat{\overline{K}}})$-algebra (\cite[5.40]{he2024coh}). Moreover, it is a perfectoid affinoid $(\widehat{\overline{K}},\mathcal{O}_{\widehat{\overline{K}}})$-algebra as $R_i$ is a pre-perfectoid $\mathcal{O}_{\overline{K}}$-algebra (\cite[5.39]{he2024coh}).  By construction, $\bigcup_{i\in I}\mathrm{Spa}(\widehat{R_i}[1/p],R_i^+)$ forms an open covering of $\widehat{X}^{\mathrm{ad}}_\eta$, which shows that the latter is perfectoid.
\end{proof}

\begin{remark}\label{rem:perfd-analytification}
	We keep the notation in Lemma \ref{lem:perfd-analytification}.
	\begin{enumerate}
		\renewcommand{\labelenumi}{{\rm(\theenumi)}}
		\item Assume that $X$ is separated of finite type over $\mathcal{O}_{\overline{K}}$. Then, $\widehat{X}^{\mathrm{ad}}_\eta$ is also the adic space associated to the rigid analytic generic fibre $\widehat{X}^{\mathrm{rig}}$ of the formal scheme $\widehat{X}$ (\cite[4.3]{huber1994general}, \cite[4.1]{bosch1993rigid1}). Let $X_\eta^{\mathrm{ad}}=X_\eta\times_{\mathrm{Spec}(\overline{K})}\mathrm{Spa}(\widehat{\overline{K}},\mathcal{O}_{\widehat{\overline{K}}})$ be the analytification of the generic fibre $X_\eta$ of $X$ as an adic space (\cite[3.8]{huber1994general}). Then, there is a canonical open immersion of adic spaces $\widehat{X}^{\mathrm{ad}}_\eta\to X_\eta^{\mathrm{ad}}$, which is an isomorphism if $X$ is proper over $\mathcal{O}_{\overline{K}}$ (\cite[4.6.(\luoma{1})]{huber1994general}, \cite[\Luoma{2}.9.1.5]{fujiwarakato2018rigid}).\label{item:rem:perfd-analytification-1}
		\item Assume that $X$ is the limit of a directed inverse system $(X_\lambda)_{\lambda\in\Lambda}$ of flat $\mathcal{O}_{\overline{K}}$-schemes with affine transition morphisms. Then, $\widehat{X}^{\mathrm{ad}}_\eta\sim \lim_{\lambda\in\Lambda}\widehat{X_\lambda}^{\mathrm{ad}}_\eta$ in the sense of \cite[2.4.1]{scholze2012moduli}. Indeed, it is easy to reduce to the affine case and thus to \cite[5.29]{he2024coh} and \cite[2.4.2]{scholze2012moduli}. \label{item:rem:perfd-analytification-2}
		\item Following (\ref{item:rem:perfd-analytification-2}), if moreover each $X_\lambda$ is proper over $\mathcal{O}_{\overline{K}}$ and if the transition morphisms of the generic fibres $(X_{\lambda,\eta})_{\lambda\in\Lambda}$ are finite \'etale surjective, then for any $\lambda_0\in\Lambda$, $(X_{\lambda,\eta}^{\mathrm{ad}})_{\lambda\in\Lambda_{\geq \lambda_0}}$ forms an object of the pro-\'etale site of $X_{\lambda_0,\eta}^{\mathrm{ad}}$ (\cite[3.9]{scholze2013adic}). Combining Lemma \ref{lem:perfd-analytification}, (\ref{item:rem:perfd-analytification-1}) and (\ref{item:rem:perfd-analytification-2}), we see that $(X_{\lambda,\eta}^{\mathrm{ad}})_{\lambda\in\Lambda_{\geq \lambda_0}}$ is perfectoid in the sense of \cite[4.3]{scholze2013adic} if $X$ is pre-perfectoid. \label{item:rem:perfd-analytification-3}
	\end{enumerate}
	
\end{remark}

\begin{proposition}\label{prop:perfd-cov-ab-var}
	Let $E$ be a semi-abelian scheme over $\overline{K}$, $E^{\mathrm{ad}}$ its analytification as an adic space, $\widetilde{E}^{\mathrm{ad}}=\lim_{[n]}E^{\mathrm{ad}}\in E^{\mathrm{ad}}_{\mathrm{pro\acute{e}t}}$ the pro-finite \'etale cover of $E^{\mathrm{ad}}$ with Galois group $T_{\widehat{\mathbb{Z}}}E^{\mathrm{ad}}$ {\rm(\cref{def.prof-etale-Etilde-cover})}, $E^{\mathrm{ad}}_H\in E^{\mathrm{ad}}_{\mathrm{pro\acute{e}t}}$ the pro-finite \'etale cover of $E^{\mathrm{ad}}$ corresponding to a closed subgroup $H=\prod_{\ell\ \mathrm{ prime}} H_{\ell}\subseteq T_{\widehat{\mathbb{Z}}}E^{\mathrm{ad}}=\prod_{\ell\ \mathrm{ prime}} T_{\ell}E^{\mathrm{ad}}$. Assume that $f^{-1}(H_p\otimes \widehat{\overline{K}}(-1))=0$, where $f:\mathrm{Lie}E^{\mathrm{ad}}\to T_pE^{\mathrm{ad}}\otimes_{\mathbb{Z}_p}\widehat{\overline{K}}(-1)$ is the dual Hodge-Tate map (see {\rm \cref{theorem.fargues-rig-an-dual-HT}} and {\rm Construction \ref{cons.p-divisible group attached to abeloid}}). Then, $E^{\mathrm{ad}}_H$ is perfectoid.
\end{proposition}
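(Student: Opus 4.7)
The plan is to follow the roadmap indicated in the preamble of this appendix: reduce to an abelian scheme of good reduction, pass to an integral model, establish pre-perfectoidness at the generic point of the special fibre via the Sen-theoretic pointwise perfectoidness of \cite{he2024perfd}, and then propagate to the whole integral model via the purity of perfectoidness of \cite{he2024purity}. Throughout we use the hypothesis that $E$ descends to the discretely valued subfield $K$, which ensures that the towers considered are defined over a $p$-adic Lie extension $K_{\infty}/K$ as in \cref{rem:ab-var-int-model-K}.

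First, I would run the same reduction as at the start of the proof of \cref{prop.perfectoid-covers-abelian}: \cref{lemma.semi-abelioid-to-abeloid} reduces from the semi-abelian $E$ to its abelian quotient; \cref{lemma.semi-abeloid-to-quotient} applied to a Raynaud uniformization reduces to the case of an abelian variety with good reduction; and a final application of \cref{lemma.semi-abelioid-to-abeloid} leaves us with an abelian variety $A/\overline{K}$ extending to an abelian scheme $\mathcal{A}/\mathcal{O}_{\overline{K}}$, together with a closed subgroup $H\subseteq T_pA$ satisfying $f^{-1}(H\otimes \widehat{\overline{K}}(-1))=0$. All of these reductions can be carried out in the category of objects defined over $K$, so the hypothesis of \cref{rem:ab-var-int-model-K} remains in force.

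Second, I would invoke \cref{prop:ab-var-int-model} to produce the $\mathcal{O}_{\overline{K}}$-scheme $\widetilde{\mathcal{A}}/\mathcal{H}$, realised as a cofiltered limit of abelian schemes with finite faithfully flat transition morphisms that are finite \'etale on generic fibres. By \cref{lem:perfd-analytification} combined with \cref{rem:perfd-analytification}(3), it suffices to show that $\widetilde{\mathcal{A}}/\mathcal{H}$ is pre-perfectoid in the sense of \cref{defn:perfd-scheme}. By \cref{cor:perfd-purity}, this in turn reduces to showing that the local ring $\mathcal{O}_{\widetilde{\mathcal{A}}/\mathcal{H},x}$ at the unique generic point $x$ of the special fibre is pre-perfectoid; equivalently, via \cref{rem:ab-var-int-model}, that the completed residue field at the generic point $y$ of $\widetilde{A}/H$ is perfectoid. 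This is a pointwise perfectoidness statement. I would then apply \cite[Theorem~1.17]{he2024perfd} to the tower $\widetilde{A}/H\to A$: the Sen morphism, computed in \cref{prop.geometric-sen-semi-abeloid}, is the constant extension of the composition
\[ \Lie A \xrightarrow{f} T_pA \otimes \widehat{\overline{K}}(-1) \longrightarrow (T_pA/H)\otimes \widehat{\overline{K}}(-1), \]
which is injective precisely under our hypothesis $f^{-1}(H\otimes \widehat{\overline{K}}(-1))=0$. Pointwise perfectoidness at $y$ follows.

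The main obstacle is the careful reconciliation of frameworks in the third step. One must verify that He's pointwise perfectoidness theorem, applied to the $K_{\infty}$-analytic tower arising from \cref{prop:ab-var-int-model} and \cref{rem:ab-var-int-model-K}, delivers exactly pre-perfectoidness of the stalk $\mathcal{O}_{\widetilde{\mathcal{A}}/\mathcal{H},x}$ in the sense of \cref{defn:perfd-scheme}, and that the Sen morphism featuring in \cite[Theorem~1.17]{he2024perfd} coincides with the one computed in \cref{prop.geometric-sen-semi-abeloid}. Once this identification is in place, the purity input \cite{he2024purity} packaged as \cref{cor:perfd-purity} does the remaining work of promoting a single pre-perfectoid stalk to pre-perfectoidness of the entire integral model, whence the perfectoidness of $E_H^{\mathrm{ad}}$ via \cref{rem:perfd-analytification}(3).
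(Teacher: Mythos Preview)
Your proposal is correct and follows essentially the same route as the paper: the same chain of reductions via \cref{lemma.semi-abelioid-to-abeloid}, \cref{lemma.semi-abeloid-to-quotient}, and Raynaud uniformization down to the good-reduction case, followed by the integral model of \cref{prop:ab-var-int-model}, the purity input \cref{cor:perfd-purity}, and stalkwise perfectoidness via He's Sen-theoretic criterion. The ``main obstacle'' you flag---matching the Sen operator of \cite{he2024perfd} with the one computed in \cref{prop.geometric-sen-semi-abeloid}---is exactly what the paper addresses by drawing the commutative square and invoking \cite[10.4]{he2024perfd} for the comparison, then \cite[8.9]{he2024perfd} (rather than Theorem~1.17 directly) for the perfectoidness conclusion at the point.
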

\begin{proof}
	Firstly, we make some simplifications as in the proof of \cref{prop.perfectoid-covers-abelian}. By almost purity (\cref{lemma.pro-etale-cover-perf-is-perf}), we may assume that $H_{\ell}=T_{\ell}E^{\mathrm{ad}}$ for any prime number $\ell\neq p$ so that $H=H_p\subseteq T_pE^{\mathrm{ad}}$. As $E$ is an extension of an abelian scheme $A$ by a torus $T$ over $\overline{K}$, its analytification $E^{\mathrm{ad}}$ is an extension of an abelian rigid analytic variety $A^{\mathrm{ad}}$ by a rigid analytic torus $T^{\mathrm{ad}}$ over $\mathrm{Spa}(\widehat{\overline{K}},\mathcal{O}_{\widehat{\overline{K}}})$ (\cite[\Luoma{2}.9.1.10]{fujiwarakato2018rigid}). Thus, we may assume that $E=A$ by \cref{lemma.semi-abelioid-to-abeloid}. Using Raynaud's uniformization and its algebraization \cite[6.6.1]{lutkebohmert2016}, there exists a semi-abelian scheme $\mathcal{E}$ extension of an abelian scheme $\mathcal{A}$ by a torus $\mathcal{T}$ over $\mathcal{O}_{\overline{K}}$ such that $E$ is isomorphic to the quotient of the generic fibre $\mathcal{E}_\eta$ by a discrete free abelian subgroup $M$. Thus, $E^{\mathrm{ad}}=\mathcal{E}_\eta^{\mathrm{ad}}/M$ so that we may assume that $E=\mathcal{E}_\eta$ by \cref{lemma.semi-abeloid-to-quotient}. Finally, using again \cref{lemma.semi-abelioid-to-abeloid}, we may assume that $E=A=\mathcal{A}_\eta$.
	
	We put $\widetilde{A}=\lim_{[p]}A$ the pro-finite \'etale $A$-scheme with Galois group $T_pA$ and $\widetilde{A}/H$ the pro-finite \'etale $A$-scheme corresponding to $H\subseteq T_pA$. Let $\widetilde{\mathcal{A}}/\mathcal{H}$ be the canonical $\mathcal{O}_{\overline{K}}$-scheme constructed in Proposition \ref{prop:ab-var-int-model}. It remains to show that $\widetilde{\mathcal{A}}/\mathcal{H}$ is pre-perfectoid by Remark \ref{rem:perfd-analytification}.(\ref{item:rem:perfd-analytification-3}).
	
	Let $y$ (resp. $x$) be the unique generic point of the generic (resp. special) fibre of the $\mathcal{O}_{\overline{K}}$-scheme $\widetilde{\mathcal{A}}/\mathcal{H}$ {\rm (Remark \ref{rem:ab-var-int-model})}. Let $\overline{F}$ be an algebraic closure of the residue field $F$ at $y\in \widetilde{A}/H$ and let $\mathcal{O}_{\overline{F}}$ be a valuation ring of $\overline{F}$ extension of $\mathcal{O}_F=\mathcal{O}_{\widetilde{\mathcal{A}}/\mathcal{H},x}$. Then, $\overline{y}=\mathrm{Spec}(\overline{F})\to \widetilde{A}/H$ is a geometric valuative point over $\mathcal{O}_{\overline{K}}$ in the sense of \cite[7.1]{he2024perfd}. Consider the following canonical diagram of $\widehat{\overline{F}}$-modules:
	\begin{align}
		\xymatrix{
			\mathrm{Hom}_{\mathcal{O}_A}(\Omega^1_{A/\overline{K}},\widehat{\overline{F}})\ar[rr]^-{\varphi_{\mathrm{Sen}}^{\mathrm{geo}}|_{T_pA/H,\overline{y}}}\ar[d]^-{\wr}&& T_pA/H\otimes_{\mathbb{Z}_p}\widehat{\overline{F}}(-1)\\
			\mathrm{Lie}A\otimes_{\overline{K}}	\widehat{\overline{F}}\ar[rr]^-{ f\otimes_{\widehat{\overline{K}}}\mathrm{id}_{\widehat{\overline{F}}}}&&T_pA\otimes_{\mathbb{Z}_p}\widehat{\overline{F}}(-1)\ar[u]
		}
	\end{align}
	where the left vertical arrow is the canonical isomorphism (\cite[\href{https://stacks.math.columbia.edu/tag/047I}{047I}]{stacks-project}), the right vertical arrow is the canonical surjection, the bottom arrow is the base change of the dual Hodge-Tate map (where we identify $\mathrm{Lie}A^{\mathrm{ad}}=\mathrm{Lie}A\otimes_{\overline{K}}\widehat{\overline{K}}$ and $T_pA^{\mathrm{ad}}=T_pA$), and the top arrow is the geometric Sen morphism of the $p$-adic analytic Galois extension $\widetilde{A}/H$ of $A$ at $\overline{y}$ constructed in \cite[8.5]{he2024perfd} (see also \cite{he2022sen}) (note that the pro-finite \'etale cover $\widetilde{A}/H\to A$ is defined over a $p$-adic analytic Galois extension of $K$, see Remark \ref{rem:ab-var-int-model-K}). This diagram is commutative by \cref{prop.geometric-sen-semi-abeloid} and \cite[10.4]{he2024perfd}. Therefore, the assumption $f^{-1}(H_p\otimes \widehat{\overline{K}}(-1))=0$ implies that the geometric Sen morphism $\varphi_{\mathrm{Sen}}^{\mathrm{geo}}|_{T_pA/H,\overline{y}}$ is injective. Then, $F$ is pre-perfectoid by \cite[8.9]{he2024perfd} so that $\widetilde{\mathcal{A}}/\mathcal{H}$ is pre-perfectoid by Corollary \ref{cor:perfd-purity}.
\end{proof}

\bibliographystyle{amsalpha}
\bibliography{main,preprints}

\end{document}